\newtheorem{teor}{Theorem}[section]
\newtheorem{defin}[teor]{Definition}
\newtheorem{lemm}[teor]{Lemma}
\newtheorem{osse}[teor]{Remark}
\newtheorem{prop}[teor]{Proposition}
\newtheorem{defi}[teor]{Definition}
\newtheorem{coro}[teor]{Corollary}
\newtheorem{prob}[teor]{Problem}
\newcommand{\bele}{\begin{lemm}\begin{sl}}
\newcommand{\enle}{\end{sl}\end{lemm}}
\newcommand{\bedef}{\begin{defi}\begin{sl}}
\newcommand{\eddef}{\end{sl}\end{defi}}
\newcommand{\bete}{\begin{teor}\begin{sl}}
\newcommand{\ente}{\end{sl}\end{teor}}
\newcommand{\beos}{\begin{osse}\begin{rm}}
\newcommand{\eddos}{\end{rm}\end{osse}}
\newcommand{\bepr}{\begin{prop}\begin{sl}}
\newcommand{\empr}{\end{sl}\end{prop}}
\newcommand{\bepro}{\begin{prob}\begin{rm}}
\newcommand{\empro}{\end{rm}\end{prob}}
\newcommand{\bede}{\begin{defin}\begin{sl}}
\newcommand{\edde}{\end{sl}\end{defin}}
\newcommand{\beco}{\begin{coro}\begin{sl}}
\newcommand{\enco}{\end{sl}\end{coro}}
\newcommand{\quext}{\quad\text}
\newcommand{\de}{\partial}
\definecolor{grey}{rgb}{0.85,0.85,0.85}
\long\def\greybox#1{%
    \newbox\contentbox%
    \newbox\bkgdbox%
    \setbox\contentbox\hbox to \hsize{%
        \vtop{
            \kern\columnsep
            \hbox to \hsize{%
                \kern\columnsep%
                \advance\hsize by -2\columnsep%
                \setlength{\textwidth}{\hsize}%
                \vbox{
                    \parskip=\baselineskip
                    \parindent=0bp
                    #1
                }%
                \kern\columnsep%
            }%
            \kern\columnsep%
        }%
    }%
    \setbox\bkgdbox\vbox{
        \color{grey}
        \hrule width  \wd\contentbox %
               height \ht\contentbox %
               depth  \dp\contentbox
        \color{black}
    }%
    \wd\bkgdbox=0bp%
    \vbox{\hbox to \hsize{\box\bkgdbox\box\contentbox}}%
    \vskip\baselineskip%
}
\newcommand{\RR}{\mathbb{R}}
\newcommand{\beeq}[1]{\begin{equation}\label{#1}}
\newcommand{\eddeq}{\end{equation}}
\newcommand{\beeqa}[1]{\begin{eqnarray}\label{#1}}
\newcommand{\eddeqa}{\end{eqnarray}}
\newcommand{\beal}[1]{\begin{align}\label{#1}}
\newcommand{\eddal}{\end{align}}
\newcommand{\bespl}[1]{\begin{split}\label{#1}}
\newcommand{\edspl}{\end{split}}
\newcommand{\bega}[1]{\begin{gather}\label{#1}}
\newcommand{\edga}{\end{gather}}
\newcommand{\beeqax}{\begin{eqnarray*}}
\newcommand{\eddeqax}{\end{eqnarray*}}
\def\qed{\ifmmode 
  \else \leavevmode\unskip\penalty9999 \hbox{}\nobreak\hfill
  \fi
  \quad\hbox{\hskip.5em\vrule width.4em height.6em depth.05em\hskip.1em}}
\def\endproofsym{\qed}
\renewenvironment{proof}[1][Proof]{\trivlist\item[\hskip\labelsep{\hskip0pt
    {\normalfont\scshape#1.}\hskip .321429\parindent}]\ignorespaces}
{\endproofsym\endtrivlist}
\def\endnobox{\def\endproofsym{}\end{proof}\def\endproofsym{\qed}}
\newcommand{\no}{\nonumber}
\newcommand{\beeqao}{\begin{eqnarray}\no}
\newcommand{\bealo}{\begin{align}\no}
\newcommand{\besplo}{\begin{split}\no}
\newcommand{\begao}{\begin{gather}\no}
\newcommand{\eps}{\varepsilon}
\newcommand{\duav}[1]{\langle{#1}\rangle}
\newcommand{\duas}[1]{(\!({#1})\!)}
\newcommand{\duavw}[1]{\langle\!\langle{#1}\rangle\!\rangle}
\newcommand{\duavwt}[1]{\langle\!\langle{#1}\rangle\!\rangle_t}
\newcommand{\ov}[1]{\overline{#1}}
\newcommand{\spaziopfz}{\big\{\bv\in H^s(0,T_0;\bH) \cap L^2(0,T_0;H^{\frac12+s}(\Omega;\RR^3)):~\bv|_{\Gamma}\in L^4(0,T_0;L^4(\Gamma;\RR^3))\big\} }
\newcommand{\Ssz}{{\mathcal S}_s(T_0)}
\newcommand{\Om}{\Omega}
\newcommand{\itt}{\int_0^t}
\newcommand{\io}{\int_\Omega}
\newcommand{\igc}{\int_{\Gamma_C}}
\newcommand{\ito}{\itt\!\io}
\newcommand{\iTT}{\int_0^T}
\newcommand{\epsi}{\varepsilon}
\newcommand{\ee}{^{\varepsilon}}
\def\R{\mathbb R}
\newcommand{\bep}{\boldsymbol{\varepsilon}}
\newcommand{\bfhi}{\boldsymbol{\fhi}}
\newcommand{\bH}{\boldsymbol{H}}
\newcommand{\bg}{\boldsymbol{g}}
\newcommand{\bbf}{\boldsymbol{f}}
\newcommand{\bphi}{\boldsymbol{\varphi}}
\newcommand{\bpsi}{\boldsymbol{\psi}}
\newcommand{\bm}{\boldsymbol{m}}
\newcommand{\bt}{\boldsymbol{t}}
\newcommand{\bsig}{\boldsymbol{\sigma}}
\newcommand{\bzero}{\boldsymbol{0}}
\newcommand{\bv}{\boldsymbol{v}}
\newcommand{\vv}{\boldsymbol{v}}
\newcommand{\bV}{\boldsymbol{V}}
\newcommand{\fhi}{\varphi}
\newcommand{\lhs}{left hand side}
\newcommand{\rhs}{right hand side}
\DeclareMathOperator{\dive}{div}
\DeclareMathOperator{\deriv}{d}
\DeclareMathOperator{\dist}{d}
\DeclareMathOperator{\Id}{Id}
\DeclareMathOperator{\sign}{sign}
\let\TeXchi\chi
\def\chi{{\setbox0 \hbox{\mathsurround0pt
$\TeXchi$}\hbox{\raise\dp0 \copy0 }}}
\newcommand{\alfaciapo}{\widehat{\alpha}}
\newcommand{\gammaciapo}{\widehat{\gamma}}
\newcommand{\betaciapo}{\widehat{\beta}}
\newcommand{\calH}{{\mathcal H}}
\newcommand{\calT}{{\mathcal T}}
\newcommand{\calE}{{\mathcal E}}
\newcommand{\calV}{{\mathcal V}}
\newcommand{\baru}{\overline{\bu}}
\newcommand{\dit}{\deriv\!t}
\newcommand{\ddt}{\frac{\deriv\!{}}{\dit}}
\newcommand{\eet}{^\varepsilon_t}
\newcommand{\eett}{^\varepsilon_{tt}}
\newcommand{\bFormula}[1]{
\begin{equation} \label{#1}}
\newcommand{\eF}{\end{equation}}
\newcommand{\Ov}[1]{\overline{#1}}
\newcommand{\vu}{\vc{u}}
\newcommand{\vn}{\vc{n}}
\newcommand{\bn}{\vc{n}}
\newcommand{\vg}{\vc{g}}
\newcommand{\bu}{\vc{u}}
\newcommand{\vutt}{\vc{u}_{tt}}
\newcommand{\TE}{\mathbb{E}}
\newcommand{\TV}{\mathbb{V}}
\newcommand{\vc}[1]{{\bf #1}}
\newcommand{\calD}{{\mathcal D}}
\definecolor{violet}{rgb}{0.85,0.05,0.85}
\newenvironment{betti}{\color{red}}{\color{black}}
\newcommand{\bebe}{\begin{betti}}
\newcommand{\ebe}{\end{betti}}
\newenvironment{ele}{\color{blue}}{\color{black}}
\newcommand{\elena}{\begin{ele}}
\newcommand{\finele}{\end{ele}}
\newenvironment{riccardo}{\color{green}}{\color{black}}
\newcommand{\beri}{\begin{riccardo}}
\newcommand{\eri}{\end{riccardo}}
\newenvironment{giu}{\color{violet}}{\color{black}}
\newcommand{\giulio}{\begin{giu}}
\newcommand{\fingiu}{\end{giu}}
\numberwithin{equation}{section}
\begin{document}

\title{A contact problem for viscoelastic bodies with
 inertial effects and unilateral boundary constraints}

\author{%
Riccardo Scala\\
Dipartimento di Matematica, Universit\`a di Pavia,\\
Via Ferrata~1, 27100 Pavia, Italy\\
E-mail: {\tt riccardo.scala@unipv.it}
\and 
Giulio Schimperna\\
Dipartimento di Matematica, Universit\`a di Pavia,\\
Via Ferrata~1, 27100 Pavia, Italy\\
E-mail: {\tt giusch04@unipv.it}
}


\maketitle
\begin{abstract}
 We consider a viscoelastic body occupying a smooth bounded domain
 $\Omega\subset \RR^3$ under the effect of a volumic traction
 force $\bg$. The macroscopic displacement vector from the equilibrium
 configuration is denoted by $\vu$. Inertial effects are considered;
 hence the equation for $\vu$ contains the second order term
 $\vutt$. On a part $\Gamma_D$ of the boundary of $\Omega$, the body
 is anchored to a support and no displacement may
 occur; on a second part $\Gamma_N \subset \de \Omega$,
 the body can move freely; on a third portion $\Gamma_C \subset \de \Omega$,
 the body is in adhesive contact with a solid support.
 The boundary forces acting on $\Gamma_C$ 
 due to the action of elastic stresses are responsible
 for delamination, i.e., progressive failure of adhesive bonds.
 This phenomenon is mathematically represented by 
 a nonlinear ODE settled on $\Gamma_C$ and
 describing the evolution of the delamination order parameter~$z$.
 Following the lines of a new approach outlined in \cite{BRSS}
 and based on duality methods in Sobolev-Bochner spaces, 
 we define a suitable concept of weak 
 solution to the resulting PDE system.
 Correspondingly, we prove an existence result on finite
 time intervals of arbitrary length. 
\end{abstract}

\noindent {\bf Key words:}~~second order parabolic equation, viscoelasticity,
 weak formulation, contact problem, adhesion, mixed boundary conditions, 
 duality.

\vspace{2mm}

\noindent {\bf AMS (MOS) subject clas\-si\-fi\-ca\-tion:} 
35L10, 74D10, 47H05, 46A20.


\section{Introduction}
\label{sec:intro}

Let $\Omega\subset\RR^3$ be a smooth bounded domain 
of boundary~$\Gamma$. We assume $\Omega$ to be occupied, during 
the fixed reference interval $(0,T)$, by a viscoelastic body.
No restriction is assumed on the final time $T>0$. 
The displacement vector with respect to the equilibrium 
configuration in $\Omega$ is noted by $\vu$.
Hence, we may assume that $\vu$ satisfies the following
equation
\begin{equation}\label{ela:intro}
  \vutt - \dive( \TV \bep(\vu_t) + \TE \bep(\vu) ) = \vg,
   \quext{in }\,\Omega,
\end{equation}
where, given a vector-valued function $\vv$,
$\bep(\vv)$ represents its symmetrized gradient;
$\vg$ is a volume force density; $\TV$ and $\TE$ are
the {\sl viscosity}\/ and {\sl elasticity}\/ tensors, respectively, 
assumed symmetric, nondegenerate, bounded,
and depending in a measurable way
on the variable $x\in \Omega$ (we refer to Section~\ref{sec:main}
below for the precise assumptions).
Inertial effects may occur and are mathematically
represented by the second order term~$\vutt$.

Let $\Gamma := \de \Omega$ be the boundary of $\Omega$ 
and let us assume $\Gamma$ to be decomposed as
$\Gamma = \ov{\Gamma_D} \cup \ov{\Gamma_N} \cup \ov{\Gamma_C}$,
where the portions $\Gamma_X$, for $X=D,N,C$, are assumed
to be relatively open in $\Gamma$, mutually disjoint, and to
have strictly positive $2$-dimensional measure. 
In addition to that, the distance between $\Gamma_C$
and $\Gamma_D$ is assumed to be strictly positive. In other words,
the contact surface $\Gamma_C$ and the support $\Gamma_D$ 
need to be strictly separated by $\Gamma_N$.
In the portion $\Gamma_D$ of the boundary, we suppose
the body to be anchored to a rigid support; hence no displacement
may occur, or, in other words, $\vu$ satisfies a homogeneous
Dirichlet condition. In the part $\Gamma_N$, we assume that the body 
is allowed to move freely, which corresponds to asking 
$\vu$ to satisfy a homogeneous Neumann boundary condition
(with standard adjustments we could equally consider
the nonhomogeneous case, corresponding to the physical
situation where a surface traction is exerted on~$\Gamma_N$).
Finally, on~$\Gamma_C$ (here, $C$ stands for ``contact''),
the body is assumed to be in adhesive contact with a hard surface,
like for instance a wall. This configuration has two 
effects: firstly, the (trace of the) displacement 
$\vu$ on $\Gamma_C$ must be directed towards the interior
of $\Omega$. Namely, we ask the following constraint to be satisfied:
\begin{equation}\label{const:intro}
  \vu\cdot \vn \le 0, \quext{on }\, \Gamma_C, 
\end{equation}
where $\vn$ is the outer normal unit vector to $\Omega$. 
The meaning of \eqref{const:intro}
is that the material in $\Omega$ may not penetrate the 
wall, but it may well detach from it. Secondly, the 
contact between the body in $\Omega$ and the wall
may cause ``damage'', i.e., loss of adhesive
properties. This phenomenon is often referred to as delamination process. 
Mathematically, it is described by means of 
a second (scalar) variable $z$, defined on~$\Gamma_C$
over the same time interval $(0,T)$.
The ``bonding function'' $z$ takes the form of an {\sl order parameter};
namely, $z(t,x)$ represents the fractional density of adhesive bonds 
that are active at the time 
$t\in(0,T)$ and at the point $x \in \Gamma_C$. In particular, 
when $z = 1$, there is total adhesion, whereas when $z = 0$ the 
bonds are completely broken. A value $z \in (0,1)$ denotes a partial
loss of adhesivity. In addition to that,
we assume that the bonds, once broken, cannot be repaired; namely,
we require the time derivative $z_t$ to be nonpositive at each 
$(t,x)\in(0,T) \times \Gamma_C$. 

In view of the above description, only the values $z\in[0,1]$
and $z_t\in(-\infty,0]$ are meaningful (or, as we will often say,
are ``physical''). Hence, the mathematical equation(s) for
$z$ must enforce in some way these constraints
(or, equivalently, exclude the ``unphysical'' configurations).
This scope may be reached by relying on the 
theory of {\sl maximal monotone
operators}\/ (cf.~the monographs \cite{At,Ba,Br}).
Indeed, the equation for $z$, settled on 
$\Gamma_C$, may take the form
\begin{equation}\label{dam:intro}
  \alpha(z_t) + z_t + \beta(z) \ni a - \frac12 | \vu |^2,
\end{equation}
where $\alpha$ and $\beta$ are {\sl monotone graphs}~in $\RR\times\RR$
entailing the required constraints. For instance, we may assume 
$\alpha=\de I_{(-\infty,0]}$,
i.e., the {\sl subdifferential}\/ of the {\sl indicator function}\/ 
of $(-\infty,0]$, whereas we may take $\beta=\de I_{[0,+\infty)}$,
i.e., the {\sl subdifferential}\/ of the {\sl indicator function}\/ 
of $[0,+\infty)$. Hence, $\alpha$ provides the nonpositivity
of $z_t$ (i.e., the {\sl irreversibility}\/ of damage),
whereas $\beta$ guarantees the nonnegativity
of $z$ (i.e., the fact that when the adhesive bonds are completely 
broken no further damage may occur). The constraint $z\le 1$ 
(i.e., the fact that the material cannot be more than completely 
integer) is automatically guaranteed by $z_t\le 0$ once it is $z\le 1$ 
at the initial time. 

The positive constant $a$ on the right hand side of~\eqref{dam:intro} 
(more generally, we may also admit $a$ to depend in a suitable 
way on $x\in \Gamma_C$) has the meaning of a threshold under which 
the elastic stresses are not strong enough to cause delamination.
Namely, when $\frac12 | \vu |^2$ is less than
$a$, the \rhs\ of \eqref{dam:intro} is nonnegative; hence no damage
is created. Instead, delamination may occur for $a-\frac12 | \vu |^2 < 0$.
Note also the term $z_t$ on the \lhs\ of \eqref{dam:intro}, meaning
we assume here the damage evolution to be {\sl rate-dependent}.

In order to get a closed system, one has to specify how the behavior
of the damage variable influences the behavior of $\vu$. This is made
explicit by the (up to now missing) boundary condition for $\vu$
on~$\Gamma_C$, which we assume in the form
\begin{equation}\label{bc:intro}
  - \big( \TV \bep(\vu_t) + \TE \bep(\vu) \big) \vn
   = \gamma( \vu \cdot \vn ) \vn + z \vu.
\end{equation}
Here, $\gamma$ is a third maximal monotone graph enforcing the 
non-penetration constraint (cf.~Remark~\ref{omog} below for 
further comments). In particular, if we take 
$\gamma = \alpha = \de I_{(-\infty,0]}$, we obtain that
(the trace of) $\vu$ is directed towards the interior
of $\Omega$, or, at most, we may have tangential displacements.
In the sequel we will actually allow for
more general assumptions on $\gamma$ (and this is why we decided 
to use a different symbol to denote it). 

Models for contact, delamination and damage in elastic media
are becoming very popular in the recent mathematical
literature. The evolution law of the adhesive damage variable~$z$ 
and its link with the displacement on $\Gamma_C$ is based on the so-called
concept of Fr\'emond delamination (see \cite{Fre}), here considered 
with the presence of nonnegligible viscosity of the adhesive (whose 
effects arise from the term $z_t$). Usually in the context of 
delamination models, the mechanical system consists of two (or more) 
elastic bodies attached upon an interface. Here we assume for simplicity 
to have only one body placed in $\Omega$ and attached to the support
in $\Gamma_C$; however we observe that our results could be 
extended to the general case by trivial generalizations. A dynamic model for
delamination without viscosity of the adhesive is also considered in~\cite{Rou}, 
where the evolution equation for the variable $\vu$ is a variant
of~\eqref{ela:intro} which takes higher order stresses into account.
A dynamic model where also thermal effects are considered
has been analyzed in \cite{RoRou}. Other related models, among many, can be found
in \cite{BBR1,BBR2,KMS,RaCaCo,RoRou2,RoTho}. A model coupling
\eqref{ela:intro} with an equation for $z$ similar to \eqref{dam:intro}
including viscosity can be found in \cite{Skadel}.  
The main difference between our model and the one in \cite{Skadel} 
stands however in the presence of the unilateral constraint \eqref{const:intro}, 
which represents also the main mathematical difficulty occurring here.
Actually, when one considers mechanical models with inertial effects
(i.e., containing the second order term $\vutt$), enforcing the constraint
\eqref{const:intro} by the methods of monotone operator theory
usually gives rise to regularity issues; namely, interpreting 
monotone graphs as monotone operators in the usual $L^2$ (or $L^p$)
framework is generally out of reach. For this reason, 
in most of the related literature, this difficulty is overcome by restating
the equation containing the constraint as a variational inequality.
This is the case, for instance, of the recent papers \cite{BBK,CSR}.
In \cite{BBK}, existence of a solution to a system closely related
to \eqref{ela:intro}+\eqref{bc:intro} (hence, the damage variable is 
not explicitly considered there) is proved by discretization techniques.
In \cite{CSR}, well posedness of a system also accounting for the
evolution of $z$ is proved by restating both equations as
variational inequalities. It is worth observing that,
differently from here, in \cite{CSR} the evolution of $z$ is assumed in 
to be {\sl reversible}, corresponding to the choice $\alpha\equiv0$ 
in the equivalent of~\eqref{dam:intro}; moreover, the quadratic 
term on the \rhs\ of the same relation is truncated (this simplifies
the proof of existence compared to our case).

In the present paper, we actually prefer to follow a partially different
approach, based on our recent work \cite{BRSS} 
(in collaboration with E.~Bonetti and E.~Rocca), where 
a strongly damped wave equation for a real valued variable $u$ 
containing a general constraint term is analyzed. 
The basic idea stands in writing a weak formulation where 
test-functions are chosen in a  ``parabolic'' 
Sobolev-Bochner space $\calV$ (in particular, in \cite{BRSS}, 
$\calV=H^1(0,T;L^2(\Omega)) \cap L^2(0,T;H^1(\Omega))$).
In that setting, the monotone graph
providing the constraint is restated as an operator acting
in the duality between $\calV$ and $\calV'$. Although 
it can be shown rather easily that this reformulation is 
factually equivalent to the statement as a (suitable) 
variational inequality, it presents a number of
notable advantages: firstly, it clarifies the regularity
of the constraint term, which usually corresponds
to a physical quantity; secondly, it permits us to prove 
further properties of solutions, like the energy inequality,
or to analyze the long-time behavior, which will be the
object of a forthcoming work. Finally, in this setting
we can still take advantage of the basic tools of 
monotone operator theory in Hilbert spaces
in order to prove existence.

It is worth pointing out a further mathematical difficulty
of the present model compared to similar ones. Namely, the coupling
term $|\vu|^2/2$ in \eqref{dam:intro} has a sort of critical growth
in space dimension~$3$. This is due to the {\sl mixed}\/
boundary conditions complementing~\eqref{ela:intro}, 
where the ``Neumann'' part \eqref{bc:intro} additionally accounts 
for the nonsmooth constraint term. In this setting,
the best space regularity we can hope to obtain for $\vu$ is 
$H^1$. This translates into an $L^4$-regularity of 
the trace on $\Gamma$ and into a (maximal) $L^2$-regularity of 
$|\vu|^2/2$. In view of this fact, the identification of 
the terms $\alpha(z_t)$ and $\beta(z)$ in the ``doubly nonlinear'' equation 
\eqref{dam:intro} is somehow nontrivial and requires a careful combination
of monotonicity and semicontinuity tools.

We finally point out that evolution equations with inertial terms and 
{\sl bilateral}\/ constraints, like for instance $\vu\cdot\bn=0$ on $\Gamma_C$, 
(corresponding to the case where only shear displacements are allowed)
are mathematically much simpler to deal with. These models arise, for instance, 
in physical systems subjected to high pressure. Such a behavior is 
often referred to as ``Mode~II'' evolution, in constrast to ``Mode~I''
evolutions, that are those where the constraint is unilateral, as
in~\eqref{const:intro}. 

The remainder of the paper is organized as follows: in the next section we 
introduce our assumptions on coefficients and data and state a mathematically
rigorous weak formulation together with our main result. This 
states existence of at least one weak solution of suitable regularity. 
The proof occupies the remainder of the paper. In particular, 
in Section~\ref{sec:appro} a regularized problem
is introduced and existence of a local solution to it is shown by means 
of a (Schauder) fixed point argument. 
Next, in Section~\ref{sec:limit}, the approximation
is removed by means of suitable a-priori estimates and compactness
methods. Other qualitative properties of weak solutions, like
the energy inequality, are also discussed there.


\section{Assumptions and main results}
\label{sec:main}

We set $H:=L^2(\Omega)$ and $\bH:=L^2(\Omega)^3$. 
%
%
Moreover, for $k\ge 1$, we introduce
\begin{align}\label{def:bHD}
  & \bH^k_D: = \big\{u\in H^k(\Om;\RR^3): u=0\text{ on } \Gamma_D\big\},
\end{align}
and we denote by $\bH^{-k}_D$ the respective dual spaces. 
We put $\bV:=\bH^1_D$, i.e.,
\begin{equation}\label{def:bV}
 \bV:=\big\{u\in H^1(\Om;\RR^3):u=0\text{ on }\Gamma_D \big\}.
\end{equation}
We also set $\bV_0:=H^1_0(\Omega;\RR^3)$ and recall that 
$\bV_0'=H^{-1}(\Omega;\RR^3)$. The spaces $\bV$ and $\bV_0$
are seen as (closed) subspaces of $H^1(\Om;\RR^3)$ 
(and in particular they inherit its norm).
The duality between $\bV$ and $\bV'$ will be indicated
by $\duav{\cdot,\cdot}$.

In the sequel we shall frequently use the continuity
of the trace operator
\begin{equation}\label{cont:trace}
   \text{from $H^1(\Omega)$ to $H^{1/2}(\Gamma_C)$}
   \quext{and from $H^1(\Omega)$ to $L^4(\Gamma_C)$}
\end{equation}
and its vector analogue. Moreover, the trace operator will be 
generally omitted in the notation; namely, functions defined in $\Omega$ 
and their traces on $\Gamma_C$ will be indicated by the same letters.

\smallskip

\noindent%
{\bf Strong formulation.}~~This can be stated as:
\begin{align}\label{ela}
  &  \vutt - \dive( \TV \bep(\vu_t) + \TE \bep(\vu) ) = \vg,
   \quext{in }\,(0,T)\times \Omega,\\
 \label{dam}
  & \alpha(z_t) + z_t + \beta(z) \ni a - \frac12 | \vu |^2,
  \quext{on }\,(0,T)\times \Gamma_C,\\
 \label{bc}
  & - \big( \TV \bep(\vu_t) + \TE \bep(\vu) \big) \bn
   \in \gamma ( \vu \cdot \bn ) \bn + z \vu,
   \quext{on }\,(0,T)\times \Gamma_C,
\end{align}
complemented with the additional boundary conditions
\begin{align}\label{dir}
  & \vu = \bzero,
   \quext{on }\,(0,T)\times \Gamma_D,\\
 \label{neum}
  & \big( \TV \bep(\vu_t) + \TE \bep(\vu) \big) \bn = \bzero,
   \quext{on }\,(0,T)\times \Gamma_N,
\end{align}
and with the Cauchy conditions
\begin{equation}\label{iniz}
  \vu|_{t=0}= \vu_0, \quad
  \vu_t|_{t=0}= \vu_1, \quad
  z|_{t=0}= z_0,
\end{equation}
where the first two relations are assumed a.e.~in~$\Omega$,
while the third one is stated a.e.~on~$\Gamma_C$.
\beos\label{omog}
It is worth noting that we considered the {\sl homogeneous}\/
condition \eqref{neum} just for the sake of simplicity. Indeed, one 
could deal with the case of a nonzero boundary traction 
$\bbf$ on the \rhs\ with standard modifications. Moreover, it may be also worth emphasizing
that, if $\bsig:=\TV \bep(\vu_t) + \TE \bep(\vu)$ denotes the 
stress tensor, then the boundary relation \eqref{bc} could 
be split into its normal and tangential parts
as follows:
\begin{align}\label{bcnor}
  & - \sigma_{\bn} := - ( \bsig \bn ) \cdot \bn
   \in \gamma ( \vu \cdot \bn ) + z \vu \cdot \bn,
   \quext{on }\,(0,T)\times \Gamma_C,\\
 \label{bctan}
  & - \bsig_{\bt} :=  - \bsig \bn + \big( ( \bsig \bn ) \cdot \bn ) \bn
    = z \vu_{\bt} := z \big(\vu - (\vu \cdot \bn) \bn \big),
   \quext{on }\,(0,T)\times \Gamma_C.
\end{align}
This corresponds exactly to the conditions considered, e.g., 
in~\cite[(1.23-24)]{BBR2} for $\nu=0$, i.e., when
no friction is assumed to occur on~$\Gamma_C$.
\eddos
\noindent%
As explained in the introduction, we are not able to provide
a solution to the strong (pointwise) formulation of the system.
Consequently, we need a weaker notion of solution. To introduce it,
we start with stating our assumptions on coefficients and data:
\begin{itemize}
\item[(a)]~%
The tensors $\TV,\TE \in L^{\infty}(\Omega; \RR^{81})$
satisfy, a.e.~in~$\Omega$, the standard symmetry properties
\begin{equation}\label{symm}
  \TV_{ijkl}=\TV_{jikl}=\TV_{ijlk}=\TV_{klij}, \quad
  \TE_{ijkl}=\TE_{jikl}=\TE_{ijlk}=\TE_{klij}.
\end{equation}
Moreover, $\TV,\TE$ are assumed to be (uniformly in~$\Omega$)
strongly positive definite; namely, there exists a constant~$\kappa>0$
such that
\begin{equation}\label{elas}
  \TE(x) \bep : \bep 
   \ge \kappa | \bep |^2, \qquad
  \TV(x) \bep : \bep 
   \ge \kappa | \bep |^2,
\end{equation}
for a.e.~$x\in \Omega$ and any symmetric matrix $\bep\in \RR^9$. Hence,
in view of Poincar\'e's and Korn's inequalities, for any 
$\bv\in \bV$ there holds
\begin{equation}\label{korn}
  \io \TE \bep(\bv) : \bep(\bv) 
   \ge \kappa \| \bv \|^2_{\bV}, \qquad
  \io \TV \bep(\bv) : \bep(\bv) 
   \ge \kappa \| \bv \|^2_{\bV},
\end{equation}
for a (possibly different) constant $\kappa>0$. 
\item[(b)]~%
We set $\alpha=\de I_{(-\infty,0]}$; moreover,
we assume $\beta$ and $\gamma$ be {\sl maximal monotone graphs}\/ 
in~$\RR\times \RR$ such that $\ov{D(\beta)}=[0,+\infty)$ and 
$\ov{D(\gamma)}=(-\infty,0]$. In particular,
an admissible choice is $\beta=\de I_{[0,+\infty)}$
and $\gamma=\de I_{(-\infty,0]}$. We recall that the domain $D(b)$ of a
graph $b\subset \RR\times \RR$ is the set $\{r\in\RR:~b(r)\not=\emptyset\}$.
We denote as $\alfaciapo$, $\betaciapo$, $\gammaciapo$ suitable convex and lower
semicontinuous functions from $\RR$ to $(-\infty,+\infty]$ such that
$\alpha=\de\alfaciapo$, $\beta=\de\betaciapo$, 
$\gamma=\de\gammaciapo$. Hence, we have in particular
$\alfaciapo=I_{(-\infty,0]}$.
We also suppose that
\begin{align}\label{limbasso}
  & \betaciapo(r), \gammaciapo(r) 
    \ge 0 \quext{resp.~for all }\, 
    r\in D(\betaciapo), D(\gammaciapo).
\end{align}
\item[(c)]~%
The initial data satisfy
\begin{equation}\label{hp:u0}
  \vu_0 \in \bV, \quad
   \gammaciapo(\vu_0\cdot\bn) \in L^1(\Gamma_C), \quad
   \vu_1 \in \bH,
\end{equation}
together with
\begin{equation}\label{hp:z0}
  z_0 \in L^\infty(\Gamma_C), \quad
   0 \le z_0 \le 1~~\text{a.e.~on~}\,\Gamma_C, \quad
   \beta^0(z_0)\in L^2(\Gamma_C).
\end{equation}
Here, for $r\in D(\beta)$, $\beta^0(r)$ is the element of minimum 
absolute value in the set $\beta(r)$ (cf.~\cite{Br}). Using the definition
of subdifferential one may easily prove
that \eqref{hp:z0} implies in particular
\begin{equation}\label{hp:z02}
   \betaciapo(z_0)\in L^1(\Gamma_C).
\end{equation}
\item[(d)]~%
The volumic force satisfies $\vg\in L^2(0,T;\bV')$.
\item[(e)]~%
We let $\Omega$ be a smooth and bounded domain of $\RR^3$.
We assume that $\Gamma=\de\Omega$ satisfies 
$\Gamma = \ov{\Gamma_D} \cup \ov{\Gamma_N} \cup \ov{\Gamma_C}$,
where $\Gamma_X$, for $X=D,N,C$, are relatively open in $\Gamma$, 
mutually disjoint, and have strictly positive $2$-dimensional Hausdorff measure. 
Moreover we assume the distance $\dist(\Gamma_C,\Gamma_D)>0$ and we 
require that $\Gamma_C$ is smooth as a subset of $\Gamma$
and has at most finitely many connected components. Specifically, 
we assume that the boundary of $\Gamma_C$ in $\Gamma$ is an
at most finite union of curves of class $C^1$.
\end{itemize}
\beos\label{sualfa}
 It would also be possible to consider more general choices for $\alpha$.
 For instance, we may ask $\alpha$ to be a maximal monotone graph
 satisfying $\ov{D(\alpha)}=(-\infty,0]$, the analogue of~\eqref{limbasso},
 plus some additional conditions regarding the behavior near~$0$.
 However, this would give rise to a number of technical complications
 in the proof, whence we decided to restrict ourselves to the basic choice
 $\alpha=\de I_{(-\infty,0]}$. On the other hand, our somehow
 general assumptions on $\beta$ and $\gamma$ do not require any 
 additional technical work. 
\eddos
\beos\label{suigrafi}
 Assumption~\eqref{limbasso} essentially states that 
 $\beta$ and $\gamma$ must have some coercivity at $\infty$. 
 For what concerns $\beta$ this is in fact
 just a technical assumption, in view of the fact that
 we will prove that $z\le 1$ almost everywhere. However,
 it may be useful in the approximation. Note that the analogue
 of~\eqref{limbasso} also holds for $\alfaciapo=I_{(-\infty,0]}$.
\eddos
\noindent%
{\bf Energy functional.}~~%
System \eqref{ela}-\eqref{bc} has a natural variational formulation. Namely, it 
can be seen as a generalized gradient flow problem for a suitable 
{\sl energy functional}. It is worth pointing out this structure from
the very beginning. To this aim, we will obtain the 
{\sl energy estimate}\/ directly from the system equations.
Of course, such a procedure has just a formal character 
at this level since we have not yet specified
which is our notion of solution and the related regularity. That said,
we first test \eqref{ela} by $\vu_t$. Using also \eqref{symm} with
the boundary conditions \eqref{dir} on~$\Gamma_D$ and \eqref{neum} 
on~$\Gamma_N$, it is easy to obtain
\begin{equation}\label{en11}
  \ddt \io \bigg( \frac12 | \vu_t |^2 
   + \frac12 \TE \bep(\vu) : \bep(\vu) \bigg)
   + \io \TV \bep(\vu_t) : \bep(\vu_t)
   = \igc \Big( \big( \TV \bep(\vu_t) + \TE \bep(\vu) \big) \bn \Big) \cdot \vu_t
     + \duav{\bg,\bu_t}.
\end{equation}
Then, we test \eqref{dam} by $z_t$ and integrate over $\Gamma_C$. A 
simple integration by parts in time gives
\begin{equation}\label{en12}
  \ddt \igc \bigg( \betaciapo(z) - a z 
   + \frac12 z | \vu |^2 \bigg)
   + \igc \big( \alpha(z_t) + z_t \big) z_t
   = \igc z ( \vu \cdot \vu_t ).
\end{equation}
Next, to cancel the terms on the \rhs s 
of \eqref{en11}-\eqref{en12}, one scalarly multiplies
\eqref{bc} by $-\vu_t$ and integrates, to obtain
\begin{equation}\label{en13}
  \igc \Big( \big( \TV \bep(\vu_t) + \TE \bep(\vu) \big) \bn \Big) \cdot \vu_t
   = - \ddt \igc \gammaciapo ( \vu \cdot \vn )
   - \igc z ( \vu \cdot \vu_t ).
\end{equation}
Hence, taking the sum of \eqref{en11}, \eqref{en12}, \eqref{en13}, we (formally)
obtain the {\sl energy identity}
\begin{equation}\label{energy}
   \ddt\calE(\vu,\vu_t,z) + \calD(\vu_t,z_t) = \duav{\bg,\bu_t},
\end{equation}
with the energy functional $\calE=\calE(\vu,\vu_t,z)$
given by
\begin{equation}\label{defiE}
  \calE := \io \bigg( \frac12 | \vu_t |^2 
   + \frac12 \TE \bep(\vu) : \bep(\vu) \bigg)
  + \igc \bigg( \betaciapo(z) - a z 
   + \frac12 z | \vu |^2 
   + \gammaciapo ( \vu \cdot \vn ) \bigg)
\end{equation}
and the dissipation integral(s) $\calD=\calD(\vu_t,z_t)$ 
defined as
\begin{equation}\label{defiD}
  \calD :=  \io \TV \bep(\vu_t) : \bep(\vu_t) 
   + \igc \big( \alpha(z_t) + z_t \big) z_t.
\end{equation}
Note that, in view of assumptions~(a)-(d), both $\calE$ and $\calD$
enjoy suitable coercivity properties. Observe also that
the \rhs\ of \eqref{energy} accounts for the contribution of
external volumic forces.

Although relation~\eqref{energy} corresponds to a basic 
physical property of the model, the mathematical procedure 
we used is formal under many aspects. The main problem is related
to the occurrence of the nonsmooth {\sl multivalued graphs}\/ 
$\alpha$, $\beta$, $\gamma$. Hence, we will see in Thm.~\ref{exist}
below that, for weak solutions, we will be only able to (rigorously) prove
a weak version of \eqref{energy} in the form of an 
{\sl inequality}\/ (cf.~\eqref{en:ineq} below).

In order to state our precise concept of weak solution we start
with introducing some more functional spaces:
\begin{align}\label{calV}
  &\mathcal V:=H^1(0,T;\bV),\\
 \label{calH}
  &\mathcal H:=H^1(0,T;H^{\frac{1}{2}}(\Gamma_C)),
\end{align}
and we let $\mathcal V'$ and $\mathcal H'$ be the respective dual spaces. 
Moreover, for all $t\in(0,T]$, we set
\begin{align}\label{calVt}
  & \mathcal V_t:=H^1(0,t;\bV),\\
 \label{calHt}
  & \mathcal H_t:=H^1(0,t;H^{\frac{1}{2}}(\Gamma_C)),
\end{align}
with the dual spaces $\mathcal V_t'$ and $\mathcal H_t'$, respectively.
Note that $\calH$ is exactly the space of traces (on~$\Gamma_C$)
of the elements of $\calV$ (and similarly for $\calH_t$ and $\calV_t$).
In the sequel, we shall note as $\duavw{\cdot,\cdot}$ the duality pairings
with respect to both space and time variables. For instance, that
symbol may note the duality between $\mathcal V$ and $\mathcal V'$ or also
that between $\mathcal H$ and $\mathcal H'$. When working on subintervals
$(0,t)$, $t\le T$, we will use the notation $\duavwt{\cdot,\cdot}$
(e.g., that may denote the duality between $\mathcal V_t$ and $\mathcal V_t'$).
Not to weight up formulas, we will use the symbol $(\cdot,\cdot)$ 
for the scalar product in both $\bH$ and $L^2(\Gamma_C)$. 
The norms in $\bH$ and $L^2(\Gamma_C)$ will be sometimes simply
noted by $\| \cdot \|$. The double brackets 
$(\!(\cdot,\cdot)\!)$ will represent the $L^2$-scalar product in
time-space variables (for instance, in $L^2(0,T;\bH)$ or in 
$L^2(0,T;L^2(\Gamma_C))$). On time subintervals, we will use the notation
$(\!(\cdot,\cdot)\!)_t$.

Next, we define the convex functional 
\begin{equation}\label{defiG}
  G: L^2(0,T;L^2(\Gamma_C)) \to [0,+\infty], \quad
   G(v):= \iTT\igc \gammaciapo(v).
\end{equation}
Then, if one considers the subdifferential
$\de G$ in the (Hilbert) space $L^2(0,T;L^2(\Gamma_C))$, it 
is well known that this coincides with
the realization of the graph $\gamma$. Namely, for 
$v,\eta \in L^2(0,T;L^2(\Gamma_C))$, one has
\begin{equation}\label{subdg}
  \eta \in \de G(v) ~ \Leftrightarrow ~
   \eta(t,x) \in \gamma(v(t,x))~~\text{a.e.~on }\,
     (0,T)\times \Gamma_C.
\end{equation}
Hence, in particular, $v$ complies with the constraint
represented by~$\gamma$. On the other hand, in our
specific situation, we will not be able to interpret 
$\gamma$ in the above sense, due to regularity lack coming
from the occurrence of the term $\vu_{tt}$.
For this reason, following the lines, e.g., of \cite{BRSS} (cf.~also
\cite{BCGG}), we provide a suitable relaxation 
of $\gamma$. To this aim, we identify $L^2(0,T;L^2(\Gamma_C))$ 
with its dual by means of the natural scalar product,
obtaining the chain of continuous and dense inclusions
\begin{equation}\label{triplet} 
  \calH \subset L^2(0,T;L^2(\Gamma_C)) 
   \sim L^2(0,T;L^2(\Gamma_C))' 
   \subset \calH'.
\end{equation}
Then, the above constructed {\sl Hilbert triplet}\/ permits us to 
relax $\gamma$ in the following sense: we define as $\gamma_w$ 
(where the subscript ``$w$'' stands for ``weak'')
the subdifferential of the restriction of $G$ 
to the space $\calH$ with respect to 
the duality between $\calH$ and $\calH'$. Namely, 
for $v\in \calH$ and $\eta \in \calH'$, we set
\begin{equation}\label{gammaw}
  \eta \in \gamma_w(v) ~ \Leftrightarrow ~
    \duavw{\eta, w - v} + G(v) \le G(w)
    ~~\text{for all }\,w \in \calH.
\end{equation}
A precise characterization of $\gamma_w$, which is a maximal
monotone operator from $\calH$ to $2^{\calH'}$, is carried out
in~\cite[Sec.~2]{BRSS} following the lines of results first proved 
in~\cite{brezisart} (see also \cite{BCGG}). 
Here we just mention the fact that, for any $v\in \calH$, there holds 
the inclusion $\gamma(v)\subset \gamma_w(v)$, 
which may however be strict.
On the other hand, once we know that $\eta\in \gamma_w(v)$,
then $v$ is still necessarily almost everywhere nonpositive
(hence, it satisfies the constraint); moreover, the inclusion
$\eta\in \gamma_w(v)$ has a precise ``measure-theoretic'' 
interpretation in terms of the original graph $\gamma$
(see \cite[Sec.~2]{BRSS} for more details).
The analogue of $\gamma_w$, noted with the same symbol for the
sake of simplicity, may be constructed also on the
space $\calH_t$, i.e., working on time subintervals.

\smallskip

We are now ready to introduce our concept of weak solution:
\begin{defin}\label{weaksol}
 Let $T>0$ and let\/ {\rm Assumptions (a)-(e)} hold.
 We say that $(\vu,\eta,z,\xi_1,\xi_2)$ is a weak solution to system 
 \eqref{ela}-\eqref{iniz} if 
 \begin{subequations}\label{reg}
 \begin{align}
  & \vu\in W^{1,\infty}(0,T;\bH)\cap H^1(0,T;\bV),\label{reg:u}\\
  & \vu_t\in H^1(0,T;\bV_0')\cap BV(0,T;\bH_D^{-2}),\label{reg:ut}\\
  & z\in W^{1,\infty}(0,T;L^2(\Gamma_C)),\label{reg:z}\\
  & \eta\in \mathcal H',\label{reg:eta}\\
  & \xi_1,\xi_2\in L^\infty(0,T;L^2(\Gamma_C))\label{reg:xi12}
 \end{align}
 \end{subequations}
 and the following properties are satisfied:
 \begin{itemize}
 \item[(i)] For all $\bphi\in \mathcal V$ it holds
 \begin{align}\label{eq1w}
  & (\vu_t(T),\bphi(T))-\duas{\vu_t,\bphi_t}+\duas{\TE \bep(\vu) , \bep(\bphi)}+\duas{\TV\bep(\vu_t), \bep(\bphi)}\nonumber\\
  & \mbox{} ~~~~~ 
   + \duavw{ \eta,\bphi \cdot \bn } + \iTT\igc z \vu \cdot\bphi
    = \duavw{ \bg, \bphi} + (\vu_1,\varphi(0)),
 \end{align}
 with the initial conditions \eqref{iniz}. Correspondingly, for every
 $t\in[0,T)$ there exists $\eta_{(t)}\in\mathcal H_t$ such that
 \begin{align}\label{eq1wt}
  & (\vu_t(t),\bphi(t)) - \duas{\vu_t,\varphi_t}_t+\duas{\TE \bep(\vu), \bep(\bphi)}_t
    + \duas{\TV\bep(\vu_t) , \bep(\bphi)}_t \nonumber\\
  & \mbox{} ~~~~~ +\duavw{ \eta_{(t)},\bphi \cdot \bn }_t + \itt\igc z \vu \cdot\bphi
    = \duavw{ \bg, \bphi}_t + (\vu_1,\varphi(0)),
 \end{align}
 for all $\bphi\in\mathcal V_t$. Moreover, the functionals $\eta$ and $\eta_{(t)}$
 are\/ {\rm compatible}, namely, if $\bphi\in\mathcal V_t$ satisfies 
 $\bphi(t)=0$ a.e.~on $\Om$, then we have
 \begin{equation}\label{compaeta}
   \duavw{ \eta_{(t)},\bphi \cdot \bn }_t=\duavw{ \eta,\Ov{\bphi} \cdot \bn },
 \end{equation}
 where $\Ov{\bphi}$ represents the trivial extension of $\bphi$ to $\mathcal V$ 
 (i.e., $\Ov{\bphi}(s,x)=0$ for $s\in[t,T]$ and $x\in\Omega$).
 \item[(ii)] For a.e.~$t\in(0,T)$ there holds
 \begin{equation}\label{eq:z}
     \xi_2(t) + z_t(t) + \xi_1(t) = a - \frac12 | \vu (t)|^2,
       \quext{a.e.~on }\,\Gamma_C.
 \end{equation}
 \item[(iii)] The following graphs inclusions hold true:
 \begin{align}
  & \xi_1\in\beta(z),\quext{a.e.~on }\,(0,T)\times \Gamma_C,\label{inc1}\\
  & \xi_2\in\alpha(z_t),\quext{a.e.~on }\,(0,T)\times \Gamma_C,\label{inc2}\\
  & \eta\in\gamma_w(\vu\cdot\vn).\label{incw}
 \end{align}
 Moreover, for all $t\in(0,T)$ we have  
 \begin{equation}\label{incwt}
   \eta_{(t)} \in\gamma_w ((\vu\cdot\vn)\llcorner_{(0,t)} ).
 \end{equation}
 \end{itemize}
\end{defin}
\noindent%
Note that, since $z$ and the term with $\eta$ in~\eqref{eq1w} 
are concentrated on $\Gamma_C$, if we choose $\bphi\in H^1(0,T;\bV_0)$
in \eqref{eq1w} and integrate by parts in time, we get back
\begin{align}
  & \langle u_{tt},\bphi\rangle
   + \int_\Om \TE \bep(\vu) : \bep(\bphi)
   + \int_\Om \TV\bep(\vu_t) : \bep(\bphi)
    = \langle g,\bphi\rangle, \label{eq1:epsw2}
\end{align}
a.e.~in~$t\in[0,T]$, where the first duality product makes sense
in view of the first~\eqref{reg:ut}. Hence, in particular 
\eqref{eq1w} implies \eqref{ela:intro} in the sense of distributions.

Let us now recall that the energy of the 
system was defined in~\eqref{defiE}. Moreover, analogously with 
\eqref{defiD}, we introduce the energy dissipation as
%
%
\begin{align}\label{new12}
 \mathcal D:= \int_\Omega \TV \bep(\vu_t) : \bep(\vu_t)
  + \int_{\Gamma_C} ( \xi_2 +  z_t ) z_t.
\end{align}
We are now ready to state our existence theorem, constituting
the main result of the present paper:
\begin{teor}\label{exist}
 Let $T>0$ and let {\rm Assumptions (a)-(e)} hold.
 Then there exists\/ {\rm at least} one weak solution $(\vu,\eta,z,\xi_1,\xi_2)$ to\/
 {\rm Problem~\eqref{ela}-\eqref{iniz}}, in the sense of\/ {\rm Def.~\ref{weaksol}}. 
 Moreover, for all times $t_2\in [0,T]$ and for a.e. $t_1\in[0,t_2)$,
 the following energy inequality holds:
 \begin{align}\label{en:ineq}
   & \mathcal E(t_2) + \int_{t_1}^{t_2}\mathcal D(\cdot)
    \leq \mathcal E(t_1)
    + \int_{t_1}^{t_2} \langle g,\vu_t\rangle.
 \end{align}
\end{teor}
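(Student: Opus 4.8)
The proof follows the usual scheme for nonlinear evolution problems: regularize the nonsmooth graphs, solve the approximate problem by a fixed-point argument, derive a priori estimates uniform in the approximation parameter, and pass to the limit using compactness and monotonicity. The plan is as follows.

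\textbf{Step 1: Approximation.} First I would replace the maximal monotone graphs $\alpha$, $\beta$, $\gamma$ by their Yosida regularizations $\alpha_\eps$, $\beta_\eps$, $\gamma_\eps$ (which are Lipschitz, single-valued, monotone), and regularize the critical coupling term $\frac12|\vu|^2$ in \eqref{dam} by a truncation $T_\eps(\frac12|\vu|^2)$ so that it is bounded. Correspondingly the constraint term in the boundary condition \eqref{bc} is replaced by $\gamma_\eps(\vu\cdot\bn)\bn$. This produces a regularized system \Pee\ with smooth nonlinearities, whose equations for $\vu$ and $z$ are now genuinely coupled but of parabolic/hyperbolic type with globally Lipschitz lower-order terms.

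\textbf{Step 2: Solving the regularized problem by a fixed point.} For \Pee\ I would set up a Schauder fixed-point map: given $\bar{\vu}$ in a suitable ball of $L^2(0,T;\bH)$ (or with the trace on $\Gamma_C$ in $L^2$), solve the ODE \eqref{dam} (regularized) on $\Gamma_C$ for $z$ — this is a doubly-nonlinear scalar ODE with Lipschitz data, solvable pointwise in $x$ — then insert $z$ into the regularized \eqref{ela}--\eqref{bc} and solve the resulting linear-in-$\vu$ damped wave problem (a standard variational parabolic-hyperbolic system, solvable by Galerkin). The resulting map $\bar{\vu}\mapsto\vu$ is continuous and compact (exploiting Aubin–Lions: the extra time regularity of $z$ and $\vu$), so Schauder gives a local-in-time solution; the a priori estimates of the next step then extend it to $[0,T]$.

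\textbf{Step 3: A priori estimates.} The central estimate is the \emph{energy estimate}, obtained rigorously at the regularized level by testing \eqref{ela} by $\vu_t$, \eqref{dam} by $z_t$, and \eqref{bc} by $-\vu_t$, exactly as in the formal computation \eqref{en11}--\eqref{energy} of the excerpt (now legitimate because the nonlinearities are smooth). Using the coercivity \eqref{korn} of $\TV$, the sign conditions \eqref{limbasso}, the bound $\alpha_\eps(r)r\ge 0$, Gronwall's lemma, and the data assumptions (c)--(d), this yields, uniformly in $\eps$: $\vu$ bounded in $L^\infty(0,T;\bV)$ with $\vu_t$ bounded in $L^\infty(0,T;\bH)\cap L^2(0,T;\bV)$; $z$ bounded in $L^\infty(0,T;L^2(\Gamma_C))$ with $z_t$ bounded in $L^2$ and $\alpha_\eps(z_t)z_t$ bounded in $L^1$; and $\betaciapo_\eps(z)$, $\gammaciapo_\eps(\vu\cdot\bn)$ bounded in $L^\infty(0,T;L^1(\Gamma_C))$. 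From \eqref{eq1:epsw2} (the interior form) one reads off $\vu_{tt}$ bounded in $L^2(0,T;\bV_0')$, i.e. the first \eqref{reg:ut}. A second estimate, testing \eqref{dam} by $\beta_\eps(z)$ (or $\beta_\eps^0$ of the appropriate argument) and using \eqref{hp:z0}, gives $\xi_1=\beta_\eps(z)$ bounded in $L^\infty(0,T;L^2(\Gamma_C))$; then from \eqref{eq:z} also $\xi_2=\alpha_\eps(z_t)$ is bounded in $L^\infty(0,T;L^2(\Gamma_C))$ — here the $L^2$-regularity of $\frac12|\vu|^2$ coming from the $L^4(\Gamma_C)$ trace embedding \eqref{cont:trace} is exactly what is needed, this being the "critical growth" difficulty flagged in the Introduction. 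Finally, the remaining $BV(0,T;\bH_D^{-2})$ bound on $\vu_t$ is obtained by using the full boundary equation \eqref{bc} as a definition of the boundary stress: $z\vu$ and $\gamma_\eps(\vu\cdot\bn)$ are controlled in $L^1$ in time with values in a negative-order space, whence $\vu_{tt}$ is a bounded measure in time with values in $\bH_D^{-2}$.

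\textbf{Step 4: Passage to the limit.} Extract weakly-$*$ convergent subsequences consistent with the regularities \eqref{reg}. Aubin–Lions–Simon compactness gives $\vu\to\vu$ strongly in $C^0([0,T];\bH)$ and, via the trace \eqref{cont:trace}, $\vu|_{\Gamma_C}\to\vu|_{\Gamma_C}$ strongly in $L^2(0,T;L^2(\Gamma_C))$ (indeed in $L^q$ for $q<4$), so $\frac12|\vu|^2\to\frac12|\vu|^2$ strongly in $L^2(0,T;L^2(\Gamma_C))$ and the truncation $T_\eps$ disappears; similarly $z\to z$ strongly in $C^0([0,T];L^2(\Gamma_C))$ and $z\vu\to z\vu$. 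The linear terms in \eqref{eq1w} pass to the limit trivially; the term $\duavw{\eta,\bphi\cdot\bn}$ requires the weak closedness of $\gamma_w$ proved in \cite[Sec.~2]{BRSS}, applied with $\eta=\gamma_\eps(\vu\cdot\bn)$ (which converges weakly in $\calH'$) — this is the one genuinely non-standard identification. The graph inclusions \eqref{inc1}--\eqref{inc2} for $\xi_1,\xi_2$ follow from the standard "$\limsup$" monotonicity trick: from the energy estimate one gets $\limsup\int\!\!\int(\xi_1 z + \xi_2 z_t)\le\int\!\!\int(\xi_1 z+\xi_2 z_t)$, which combined with maximal monotonicity of $\beta\times\alpha$ in $L^2((0,T)\times\Gamma_C)$ closes both inclusions simultaneously; this is the "careful combination of monotonicity and semicontinuity" alluded to in the Introduction, made delicate by the doubly-nonlinear structure $\alpha(z_t)+\beta(z)$ in a single equation. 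The initial conditions \eqref{iniz} survive because of the $C^0$-in-time convergences. The local-in-time statements \eqref{eq1wt}, \eqref{compaeta}, \eqref{incwt} are obtained by running the same argument on $(0,t)$ and checking compatibility of the constructed functionals — the compatibility \eqref{compaeta} is essentially a restriction/extension property of the duality pairing and follows by density once one knows $\eta_{(t)}$ and $\eta$ are both limits of the same $\gamma_\eps(\vu\cdot\bn)$.

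\textbf{Step 5: Energy inequality.} For \eqref{en:ineq}, pass to the limit in the regularized energy \emph{identity} (integrated between $t_1$ and $t_2$). The dissipation terms and $\calE(t_2)$ are lower semicontinuous under the weak convergences (convexity of $\betaciapo$, $\gammaciapo$, weak lower semicontinuity of the quadratic forms), while $\calE(t_1)$ converges for a.e. $t_1$ (choosing $t_1$ a Lebesgue point), and the forcing term converges strongly; the inequality is thus inherited. The one subtlety is the term $\frac12 z|\vu|^2$ in $\calE$, handled by the strong convergence of the traces noted above.

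\textbf{Main obstacle.} The hardest point is Step 4: the simultaneous identification of the two graph terms $\xi_1\in\beta(z)$ and $\xi_2\in\alpha(z_t)$ in the doubly-nonlinear equation \eqref{eq:z}, under only $L^2$-control of the critical right-hand side $\frac12|\vu|^2$, together with the relaxed identification $\eta\in\gamma_w(\vu\cdot\bn)$ which cannot be done in the pointwise sense \eqref{subdg} but only through the duality-based closedness of $\gamma_w$ borrowed from \cite{BRSS}. Getting the $\limsup$ inequality needed for the $\beta,\alpha$ identification requires the strong convergence of $z(t_2)$ (hence the $C^0$-in-time bound) and careful bookkeeping of the energy balance up to the boundary.
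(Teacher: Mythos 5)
Your proposal captures the overall architecture (Yosida regularization, Schauder fixed point, uniform a~priori estimates, limit passage via compactness and monotonicity), and Steps~1--3 are, in spirit, what the paper does; but there are two genuine gaps in Step~4 and Step~5, and they are precisely the points the paper flags as the main technical difficulties.

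\paragraph{Gap 1: the critical $L^4$ trace convergence.}
You write that Aubin--Lions gives $\vu^\eps|_{\Gamma_C}\to\vu|_{\Gamma_C}$ strongly in $L^q(0,T;L^q(\Gamma_C))$ for $q<4$, and you then assert ``so $\frac12|\vu^\eps|^2\to\frac12|\vu|^2$ strongly in $L^2(0,T;L^2(\Gamma_C))$.'' This implication is false: strong $L^q$ convergence of $\vu^\eps$ for $q<4$ only gives strong $L^{q/2}$ convergence of $|\vu^\eps|^2$, i.e.\ strong convergence in $L^p$ with $p<2$, which is \emph{subcritical} and not enough to pass to the limit in the source term of~\eqref{dam} paired against $z_t^\eps$ (only weakly convergent in $L^2$). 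This is exactly the ``critical growth'' obstacle advertised in the Introduction. The paper closes it in Lemma~\ref{lemma4}: starting from the Minty identification of $\eta\in\gamma_w(\vu\cdot\bn)$, a $\limsup$/semicontinuity comparison between~\eqref{semi11b} and the rearranged~\eqref{semi14}, combined with the coercivity of $\TV$, $\TE$, upgrades the weak convergence of $\vu^\eps$ to \emph{strong} convergence in $L^2(0,T;\bV)$ and in $\bV$ for every $t$ (see \eqref{uforte1}--\eqref{uforte2}); together with the $L^\infty(0,T;\bV)$ bound this yields strong $L^4(0,T;\bV)$ and hence the needed $L^4(0,T;L^4(\Gamma_C;\RR^3))$ strong convergence~\eqref{uforte3}. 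Your plan misses this step entirely; without it the identification of $\xi_2\in\alpha(z_t)$ and the passage to the limit in the energy balance both fail.

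\paragraph{Gap 2: ``simultaneous'' identification of $\xi_1$ and $\xi_2$.}
You propose to close the inclusions $\xi_1\in\beta(z)$ and $\xi_2\in\alpha(z_t)$ \emph{simultaneously} by a single $\limsup$ inequality for the maximal monotone operator $\beta\times\alpha$ on $L^2\times L^2$. The required inequality would involve $\limsup\iint\beta^\eps(z^\eps)z^\eps$, but after testing the regularized flow rule by any reasonable combination of $z^\eps$ and $z^\eps_t$, cross terms such as $\iint\alpha^\eps(z^\eps_t)z^\eps$ or $\iint\beta^\eps(z^\eps)z^\eps_t$ appear whose limit cannot be computed from weak convergence alone: you need \emph{strong} convergence of $z^\eps$. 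The paper therefore proceeds sequentially: Lemma~\ref{lemma5} first proves strong convergence of $z^\eps(t)$ in $L^2(\Gamma_C)$ for every $t$ by the Blanchard--Damlamian--Ghidouche Cauchy-sequence argument (using the nonexpansivity of $A_\eps=(\Id+\alpha^\eps)^{-1}$, the resolvent identities~\eqref{bre1}--\eqref{bre2}, the generalized Gronwall lemma, and --- crucially --- the $L^4$ trace convergence from Gap~1). Only then does $\xi_1\in\beta(z)$ follow by standard weak--strong closure, after which a separate Minty argument on $\alpha^\eps(z^\eps_t)$ (equations~\eqref{4.43}--\eqref{4.45}, which again use the $L^4$ convergence and the chain rule) yields $\xi_2\in\alpha(z_t)$. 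Your proposal mentions neither the BDG lemma nor any mechanism providing strong convergence of $z^\eps$; as written the ``simultaneous'' trick does not close.

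\paragraph{Minor route differences (not gaps, but worth noting).}
You truncate $\frac12|\vu|^2$; the paper does not (it explicitly distances itself from the truncation approach of~\cite{CSR}) and instead works in the fractional Sobolev--Bochner space $\calS_s(T_0)$ of~\eqref{def:Ss} for the Schauder argument, precisely because $L^4$ trace control must be carried along. You test the first a~priori estimate with $z_t^\eps$; the paper tests with $\partial_t(z^\eps)^+$ because the approximate $z^\eps$ need not stay nonnegative (this is also why $(z)^+$ replaces $z$ in the regularized boundary condition~\eqref{bcep}). Your second estimate tests with $\beta^\eps(z^\eps)$; the paper tests with $\frac{\deriv}{\dit}\beta^\eps(z^\eps)$, yielding the $L^\infty$-in-time bound~\eqref{est:beta:z}. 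These are not fatal, but together with the two gaps they indicate that the delicate interlocking of the estimates is not yet under control in your plan.
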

\noindent%
The proof of the above result will occupy the remainder of the paper. 
%
%
%


\section{Regularized problem and local existence}
\label{sec:appro}

In this section we introduce a regularized version of our problem and prove existence
of a local (in time) solution by means of a suitable fixed point argument. In view of 
the fact that this procedure is standard under many aspects, we omit most 
details and just present the basic highlights.

First of all, for $\epsi\in(0,1)$ intended to go to $0$ in the limit,
we take suitable regularizations $\alpha\ee$, $\beta\ee$ and $\gamma\ee$ 
of the maximal monotone graphs $\alpha$, $\beta$ and $\gamma$. In particular,
we will consider the {\sl Yosida approximations}\/ defined, e.g., in \cite{Br},
to which we refer the reader for details. Here we just recall that
$\alpha\ee$, $\beta\ee$ and $\gamma\ee$ are 
monotone and Lipschitz continuous functions defined on the 
whole real line. Moreover,
%
%
%
%
they tend, respectively, to $\alpha$, $\beta$ and $\gamma$ in a suitable 
way, usually referred to as 
{\sl graph convergence}~in $\RR\times \RR$. Namely, for any
$[r;s]\in \alpha$ and any $\epsi\in(0,1)$, there exists 
$r\ee\in\RR$ such that $r\ee\to r$ and $\alpha\ee(r\ee)\to s$
in $\RR$ as $\epsi\searrow 0$, with analogous properties
holding for $\beta$ and~$\gamma$. 
In view of our choice $\alpha=\partial I_{(-\infty,0]}$,
we may explicitly compute
\begin{equation}\label{yosida}
  \alpha\ee(r) = \epsi^{-1} (r)^+, \quad
    A_\eps:=(\Id+\alpha^\eps)^{-1}(r)=
    - (r)^- + \frac{\eps}{\eps+1} (r)^+. 
\end{equation}
Notice that $A_\eps$ is a nonexpansive operator.

\smallskip

%

Our aim will be to solve, at least locally in time, a regularized statement,
which, in the strong form, can be written as follows:
\begin{align}\label{elaep}
  &  \vutt - \dive\big( \TV \bep(\vu_t) + \TE \bep(\vu) \big) = \vg,
   \quext{in }\,\Omega,\\
 \label{damep}
  & \alpha\ee(z_t) + z_t + \beta\ee(z) = a - \frac12 | \vu |^2,
  \quext{on }\,\Gamma_C,\\
 \label{bcep}
  & - \big( \TV \bep(\vu_t) + \TE \bep(\vu) \big) \bn
   = \gamma\ee ( \vu \cdot \bn ) \bn + (z)^+ \vu,
   \quext{on }\,\Gamma_C,
\end{align}
coupled with the initial conditions \eqref{iniz} and
the boundary conditions \eqref{dir}-\eqref{neum}.
The function $(z)^+$ in \eqref{bcep} denotes the positive part
of $z$. Actually, $z$ is not guaranteed to be nonnegative 
at the approximate level since the (smooth) function 
$\beta\ee$ does not enforce any constraint.

In fact, we will deal with a weak formulation of the above system,
to which we will apply Schauder's fixed point theorem. Hence, we 
start with introducing the fixed point space: for 
a small but otherwise arbitrary number $s\in (0,1/2)$,
and for $T_0\in(0,T]$ to be chosen at the end, we set
\begin{equation}\label{def:Ss}
  \Ssz:=\spaziopfz.
\end{equation}
The space $\Ssz$ is naturally endowed with the graph norm,
noted as $\| \cdot \|_{\Ssz}$ for brevity, which turns it into
a Banach space. Notice also that the trace of $\bv$ makes sense in
view of the assumed $H^{\frac12+s}$-space regularity.
Then, for some $M >0$, we consider the {\sl closed}\/ ball
\begin{equation}\label{def:BM}
  B_M:= \big\{ \bv \in \Ssz:~\| \bv \|_{\Ssz} \le M \big\}.
\end{equation}
Note that the choice of $M>0$ is essentially arbitrary. Its value
will in fact influence the resulting final time $T_0$, but 
this is irrelevant at the light of the subsequent uniform estimates.

\smallskip

The basic steps of our fixed point argument are carried out in the next
three lemmas.
\bele\label{lemma:1}
 Let $\vu_0$ satisfy~\eqref{hp:u0} and
 $z_0$ satisfy~\eqref{hp:z0}. More precisely, let us set
 \begin{align}\label{defiU}
   & U:= \| \vu_0 \|_{\bV} + \| \vu_1 \|_{\bH}
    + \| \gammaciapo(\vu_0\cdot\bn) \|_{L^1(\Gamma_C)},\\
  \label{defiZ}
   & Z:= \| z_0 \|_{L^2(\Gamma_C)}
     + \| \betaciapo(z_0) \|_{L^1(\Gamma_C)}.
 \end{align}
 Let also $\baru\in B_M$.
 Then there exists one and only one function $z$, with
 \begin{equation}\label{reg:z:fp}
    \| z \|_{L^\infty(0,T_0;L^2(\Gamma_C))}
    + \| z \|_{H^{1}(0,T_0;L^2(\Gamma_C))}
     \le Q(\epsi^{-1},Z,M),
 \end{equation}
 satisfying, a.e.~on~$(0,T_0)\times \Gamma_C$, the equation
 \begin{equation}\label{dam:pf}
    \alpha\ee(z_t) + z_t + \beta\ee(z) = a - \frac12 | \baru |^2,
 \end{equation}
 with the boundary condition $z|_{t=0}=z_0$.
 Here and below, $Q$ denotes a computable nonnegative-valued function,
 increasingly monotone in each of its arguments, whose expression
 may vary on occurrence.
\enle
\begin{proof}
Using \eqref{hp:z0}-\eqref{hp:z02} with the graph convergence
$\beta\ee\to \beta$, it is not difficult to prove that, at least for
$\epsi\in (0,1)$ small enough, there holds
\begin{equation}\label{z0Mee}
  \| z_0 \|_{L^2(\Gamma_C)} 
   + \| \betaciapo\ee(z_0) \|_{L^1(\Gamma_C)} 
   \le 2 Z.
\end{equation}
Then, existence of a solution $z$ can be proved by using standard 
existence results for ODE's. The regularity \eqref{reg:z:fp} can be 
inferred simply by testing \eqref{dam:pf} by $z_t$. 
Actually, integrating over $\Gamma_C$, we then obtain
\begin{align}\no
  \ddt \igc \betaciapo\ee(z)
   + \igc \alpha\ee(z_t) z_t
   + \| z_t \|_{L^2(\Gamma_C)}^2
  &  = \igc \Big( a - \frac12 | \baru |^2 \Big) z_t \\
 \label{l1:11}
   \le \frac 14 \| z_t \|_{L^2(\Gamma_C)}^2
    + c \big (1  + \| \baru \|_{L^4(\Gamma_C;\RR^3)}^4 \big).
\end{align}
Observe that the integration by parts is allowed in view
of the smoothness of $\beta\ee$.
%
%
Then, by the first~\eqref{yosida}, 
the second integral on the \lhs\ is nonegative. Hence, 
using also \eqref{z0Mee}, we infer
\begin{equation}\label{l1:13}
  \| z_t \|_{L^2(0,T_0;L^2(\Gamma_C))} 
   + \| \betaciapo\ee(z) \|_{L^\infty(0,T_0;L^1(\Gamma_C))} 
   \le Q(M,Z,\epsi^{-1}),
\end{equation}
whence, using again \eqref{z0Mee} to estimate $z$ from $z_t$,
we obtain \eqref{reg:z:fp}. Finally, to ensure uniqueness
of $z$ one can use standard contractive arguments.
For instance, if $z_1$ and $z_2$ are two solutions, one may test the difference of 
the corresponding equations \eqref{dam:pf} by the difference $(z_1-z_2)_t$
and use monotonicity and Lipschitz continuity of $\alpha\ee$ and 
$\beta\ee$ together with Gronwall's lemma. We omit details. 
\end{proof}
\bele\label{lemma:2}
 Let $\vu_0$, $z_0$, $M$, $\baru$ as above. Let also
 $z$ be the function provided by the previous lemma. Then there exists
 one and only one function $\vu$, with
 \begin{equation}\label{reg:u:fp}
    \| \bu \|_{W^{1,\infty}(0,T_0;\bH)}
    + \| \bu \|_{H^1(0,T_0;\bV)}
    + \| \bu \|_{L^\infty(0,T_0;\bV)}
     \le Q(\epsi^{-1},M,Z,U),
 \end{equation}
 satisfying, a.e.~on~$(0,T_0)$ and for any $\bphi \in \bV$, the  
 equation
 \begin{equation} \label{ela:pf}
    \duav{ \vutt,\bphi} + \io ( \TV \bep(\vu_t) + \TE \bep(\vu) ) : \bep(\bphi)
    + \igc \big( \gamma\ee ( \vu \cdot \bn ) \bn + (z)^+ \vu \big)\cdot \bphi 
    = \duav{ \bg, \bphi},
 \end{equation}
 together with the Cauchy conditions $\vu|_{t=0}=\vu_0$ and $\vu_t|_{t=0}=\vu_1$.
\enle
\begin{proof}
The weak formulation \eqref{ela:pf} is obtained from \eqref{elaep}
simply testing by  $\bphi \in \bV$, integrating by parts, and 
using the boundary condition \eqref{bcep}.
Then, existence of at least one solution can be proved by adapting the procedure
given, e.g., in \cite{BBR2}. Here we just reproduce the 
corresponding regularity estimate, which is needed in order to 
get \eqref{reg:u:fp}. Namely, we take $\bfhi=\vu_t$ in \eqref{ela:pf},
and, proceeding as in the Energy estimate detailed before, we get 
\begin{align}\no
  & \ddt \io \Big( \frac12 | \vu_t |^2 
   + \frac12 \TE \bep(\vu) : \bep(\vu) \Big)
   + \ddt \igc \gammaciapo\ee ( \vu \cdot \vn )
   + \io \TV \bep(\vu_t) : \bep(\vu_t) \\
 \no
  & \mbox{}~~~~~
   = - \igc (z)^+ ( \vu \cdot \vu_t )
   + \duav{ \bg, \bu_t}
   \le \| z \|_{L^2(\Gamma_C)} \| \vu \|_{L^4(\Gamma_C)} \| \vu_t \|_{L^4(\Gamma_C)} 
   + \| \bg \|_{\bV'} \| \bu_t \|_{\bV}\\
 \label{l2:11}  
  & \mbox{}~~~~~
   \le  Q(\epsi^{-1},M,Z) \| \vu \|_{\bV} \| \vu_t \|_{\bV} + \| \bg \|_{\bV'} \| \bu_t \|_{\bV}
   \le Q(\epsi^{-1},M,Z) \| \vu \|_{\bV}^2 + c \| \bg \|_{\bV'}^2
   + \frac{\kappa}2 \| \vu_t \|_{\bV}^2,
\end{align}
where $\kappa$ is the same constant as in \eqref{elas}.
Note that, to deduce the last inequalities, we used \eqref{reg:z:fp}
together with the trace theorem (cf.~\eqref{cont:trace})
and Young's inequality. Now, as before, from \eqref{defiU}
and the graph convergence $\gamma\ee\to \gamma$, we can easily
prove, at least for $\epsi\in(0,1)$ small enough,
\begin{equation}\label{defiU2}
  \| \vu_0 \|_{\bV} + \| \vu_1 \|_{\bH}
    + \| \gammaciapo\ee(\vu_0\cdot\bn) \|_{L^1(\Gamma_C)}
   \le 2 U.
\end{equation}
Hence, integrating \eqref{l2:11} in time, recalling 
\eqref{elas}-\eqref{korn}, and using 
Gronwall's lemma, we readily obtain that $\vu$ satisfies~\eqref{reg:u:fp}. 

To obtain uniqueness, we can proceed similarly as above. Namely, we 
assume to have two solutions $\vu_1$ and $\vu_2$ to \eqref{ela:pf},
take the difference of the corresponding equations, and substitute
$\bphi=(\vu_1-\vu_2)_t$ therein. Then, the Lipschitz continuity
of $\gamma\ee$ and the properties of the trace operator permit
us to get a contractive estimate via a procedure similar
to~\eqref{l2:11}.
\end{proof}
\bele\label{lemma:3}
 Let $\vu_0$, $z_0$, $M$, $\baru$ as above. Let also
 $z$ be the function provided by\/ {\rm Lemma~\ref{lemma:1}}
 and $\bu$ be the corresponding function provided by\/ 
 {\rm Lemma~\ref{lemma:2}}. Let us consider the map
 \begin{equation}\label{defiT}
    \calT: B_{M} \to W^{1,\infty}(0,T_0;\bH) \cap L^\infty(0,T_0;\bV), \quad
     \calT:\baru \mapsto \bu.
 \end{equation}
 %
 %
 %
 %
 Then, at least for $\epsi\in(0,1)$ sufficiently small, we can take
 $T_0\in (0,T]$, possibly depending on $\epsi$, $M$, $U$ and $Z$,
 such that the map $\calT$
 \begin{itemize}
 \item[(a)] takes values into $B_{M}$;
 \item[(b)] is continuous with respect to the (strong)
  topology of $\Ssz$;
 \item[(c)] maps $B_{M}$ into a compact subset of $\Ssz$.
 \end{itemize}
\enle
\begin{proof}
(a)~~Thanks to \eqref{reg:u:fp}, to \eqref{cont:trace}, and to 
standard embedding and trace theorems, we have
\begin{align}\label{u:fp2:1}
  \| \bu \|_{L^4(0,T_0;L^4(\Gamma;\RR^3))}
   & \le c T_0^{1/4} \| \bu \|_{L^\infty(0,T_0;L^4(\Gamma;\RR^3))}
   \le c T_0^{1/4} \| \bu \|_{L^\infty(0,T_0;\bV)}
   \le c T_0^{1/4} Q,\\
 \label{u:fp2:2}
  \| \bu \|_{H^s(0,T_0;\bH)}
   & \le c \| \bu \|_{H^1(0,T_0;\bH)}
   \le c T_0^{1/2} \| \bu \|_{W^{1,\infty}(0,T_0;\bH)}
   \le c T_0^{1/2} Q,\\
 \label{u:fp2:3}
  \| \bu \|_{L^2(0,T_0;H^{\frac12+s}(\Omega;\RR^3))}
   & \le c \| \bu \|_{L^2(0,T_0;\bV)}
   \le c T_0^{1/2} \| \bu \|_{L^\infty(0,T_0;\bV)}
   \le c T_0^{1/2} Q,
\end{align}
where we wrote $Q$ in place of $Q(\epsi^{-1},M,Z,U)$, for brevity,
and where $c>0$ are embedding constants {\sl independent of}\/ $T_0$.
Hence, we can choose $T_0$ sufficiently small, possibly depending
on $\epsi$, so that
\begin{equation}\label{l3:11}
  \| \bu \|_{\Ssz}
  \le c T_0^{1/4} Q(\epsi^{-1},M,Z,U) \le M,
\end{equation}
as desired. 
%

\smallskip

\noindent%
(b)~~Let $\{\baru_n\}\subset B_{M}$ and let 
$\baru_n\to \baru$ in $\Ssz$. Let also $z_n$ and $z$
be the corresponding functions given by Lemma~\ref{lemma:1}, let
$\vu_n=\calT(\baru_n)$ and let $\vu=\calT(\baru)$ 
be the corresponding solutions given by Lemma~\ref{lemma:2}.
We have to prove that 
\begin{equation}\label{l3:11b}
  \vu_n\to \vu \quext{strongly in }\, \Ssz.
\end{equation}
First, repeating, with the proper adaptations,
the uniqueness argument sketched in Lemma~\ref{lemma:1}
(and using in particular the Lipschitz continuity of $\alpha\ee$ and 
$\beta\ee$), we can easily show that 
\begin{equation}\label{l3:12}
  \lim_{n\nearrow \infty} \| z_n - z \|_{H^1(0,T_0;L^2(\Gamma_C))}
   = 0.
\end{equation}
Next, we work on equation \eqref{ela:pf}. 
Proceeding similarly with \eqref{l2:11} and performing standard
manipulations, it is not difficult to obtain
\begin{equation}\label{l3:13}
  \lim_{n\nearrow \infty} \big( \| \vu_n - \vu \|_{W^{1,\infty}(0,T_0;\bH)}
   + \| \vu_n - \vu \|_{L^\infty(0,T_0;\bV)} \big)
  = 0,
\end{equation}
This relation, also on account of~\eqref{cont:trace}, 
implies \eqref{l3:11b}, as desired.

\smallskip

\noindent%
(c)~~The proof is similar to the above one, but a bit more tricky. Indeed,
we still consider a sequence $\{\baru_n\}\subset B_{M}$, but we now 
just assume that 
\begin{equation}\label{weak:barun}
  \baru_n\to \baru \quext{weakly in }\, \Ssz.
\end{equation}
Then, with the same notation as above, we need to show that
at least a {\sl subsequence} of~$\{ \vu_n \}$ satisfies 
\eqref{l3:11b}. To prove this fact, we first observe that,
thanks to standard interpolation and (compact) embedding results,
there exists a (non-relabelled) subsequence of $n$ such that,
for some $p\in(1,2)$ depending on the choice of $s$,
\begin{equation}\label{l3:21}
  \baru_n\to \baru \quext{strongly in }\, L^{2p}(0,T_0;L^{2p}(\Gamma_C;\RR^3)),
\end{equation}
whence we have in particular
\begin{equation}\label{l3:22}
  |\baru_n|^2\to |\baru|^2 \quext{strongly in }\, L^p(0,T_0;L^p(\Gamma_C))
    \quext{and weakly in }\, L^2(0,T_0;L^2(\Gamma_C)).
\end{equation}
Let us now write \eqref{dam:pf} for the index $n$ and for the limit
(where $z$ is the solution corresponding to $\baru$),
and take the difference. Rearranging term and applying the 
inverse operator $(\Id + \alpha\ee)^{-1}$, we get the relation
\begin{equation}\label{l3:23}
  ( z_n - z )_t = (\Id + \alpha\ee)^{-1} 
     \Big( - \frac12 |\baru_n|^2 + \frac12 |\baru|^2
      - \beta\ee(z_n) + \beta\ee(z) \Big).     
\end{equation}
Then, testing by $|z_n - z|^{p-1}\sign(z_n-z)$ and
using the Lipschitz continuity of $(\Id + \alpha\ee)^{-1}$
and of $\beta\ee$ together with H\"older's and Young's 
inequalities, it is not difficult to arrive at
\begin{equation}\label{l3:24}
  \frac1p \ddt \| z_n - z \|_{L^p(\Gamma_C)}^p 
    \le C_\epsi \big( \| z_n - z \|_{L^p(\Gamma_C)}^p 
    + \| \baru_n - \baru \|_{L^{2p}(\Gamma_C)}^{p} 
     \| \baru_n + \baru \|_{L^{2p}(\Gamma_C)}^{p} \big).     
\end{equation}
Hence, applying Gronwall's lemma, we obtain 
\begin{equation}\label{l3:24b}
  z_n\to z \quext{strongly in }\, L^p(0,T_0;L^p(\Gamma_C)).
\end{equation}
Moreover, by the Lipschitz continuity of $(\Id + \alpha\ee)^{-1}$ 
and of $\beta\ee$, we can easily prove that the analogue of 
\eqref{l3:24b} holds also for $(z_n)_t$. 
%
%
Repeating the a priori estimates given at point~(b)  to obtain \eqref{l3:12}, 
we then conclude that
\begin{equation}\label{l3:25}
  z_n\to z \quext{weakly in }\, H^1(0,T_0;L^2(\Gamma_C)).
\end{equation}
Combining \eqref{l3:24b} and \eqref{l3:25} we also get
\begin{equation}\label{l3:25b}
  (z_n)^+\to (z)^+ \quext{strongly in }\, L^r(0,T_0;L^r(\Gamma_C))
   \quext{for every }\,r\in[1,2).
\end{equation}
Next, considering equation \eqref{ela:pf} with \rhs\
depending on $z_n$ and repeating the usual
energy estimate, it is easy to obtain
\begin{equation}\label{l3:31}
  \vu_n\to \vu \quext{weakly star in }\, W^{1,\infty}(0,T_0;\bH)
    \cap H^1(0,T;\bV)
\end{equation}
for some limit function $\vu$. In particular, due to standard
compact embedding results for vector-valued Sobolev spaces, this
entails
\begin{equation}\label{l3:31a}
  \vu_n\to \vu \quext{strongly in }\, 
   H^s(0,T_0;\bH) \cap L^2(0,T_0;H^{\frac12+s}(\Omega;\RR^3)).
\end{equation}
Hence, to get \eqref{l3:11b}, it remains to show that 
\begin{equation}\label{l3:31d}
  \vu_n\to \vu \quext{strongly in }\, 
   L^4(0,T_0;L^4(\Gamma;\RR^3)).
\end{equation}
This will be proved at the end. 
%
%
Preliminary, we may notice that $\vu$ solves 
the limit equation \eqref{ela:pf}. Actually, we have
\begin{equation}\label{l3:31b}
  (z_n)^+ \vu_n \to (z)^+ \vu \quext{weakly in }\, L^r(0,T_0;L^r(\Gamma_C;\RR^3))
    \quext{for some }\,r>1,
\end{equation}
as a consequence of \eqref{l3:25b}, \eqref{l3:31a}, and of the continuity 
of the trace operator from $H^{\frac12+s}(\Omega)$ 
to $H^{s}(\Gamma)$ for any $s\in(0,1/2)$.
Hence, it turns out that $\vu=\calT(\baru)$. To conclude the proof,
we then need to show \eqref{l3:31d}. More precisely,
we will reinforce \eqref{l3:31} proving that
\begin{equation}\label{l3:31str}
  \vu_n\to \vu \quext{{\bf strongly} in }\, W^{1,\infty}(0,T_0;\bH)
    \cap H^1(0,T;\bV).
\end{equation}
Then, \eqref{l3:31d} will follow from \eqref{cont:trace}. To get 
\eqref{l3:31str} we need to use a semicontinuity argument,
which we just sketch. Indeed, the same procedure will be repeated
in the next section under more restrictive assumptions.
We actually test \eqref{elaep}, written for $\vu_n$, by $\vu_n$,
and integrate. Then, using
\eqref{bcep} and performing some integration by parts, we arrive at
\begin{align}\nonumber
  & \ito \TE \bep(\vu^\eps) : \bep(\vu\ee)
   + \frac12 \io \TV \bep(\vu^\eps(t)) : \bep(\vu^\eps(t))
   = \frac12 \io \TV \bep(\vu_0) : \bep(\vu_0)
   - \ito \gamma^\eps(\vu^\eps\cdot\bn) \vu\ee \cdot \bn \\
 \label{semi01}  
  & \mbox{}~~~~~
   + \ito |\vu^\eps_{t}|^2
   - ( \vu\eet(t), \vu\ee(t) )
   + ( \vu_1, \vu_0 )
   - \itt\igc (z\ee)^+ |\vu\ee|^2
   + \itt \duav{ \bg, \vu\ee}.
\end{align}
We also test \eqref{elaep}, written for the limit $\vu$, by $\vu$,
obtaining an analogue relation. Then, we take the $\limsup$ of 
\eqref{semi01} at the level $n$ and we compare the outcome
with \eqref{semi01} written for $\vu$. Treating the terms on the
\rhs\ by owing to the smoothness of $\gamma\ee$ and by semicontinuity
tools (see the next section for details), we then obtain,
for every $t\in(0,T_0]$,
\begin{align}\nonumber
  & \limsup_{\epsi\searrow 0} \bigg( \ito \TE \bep(\vu^\eps) : \bep(\vu\ee)
   + \frac12 \io \TV \bep(\vu^\eps(t)) : \bep(\vu^\eps(t)) \bigg)\\
 \label{semi01b}  
  & \mbox{}~~~~~
    \le  \ito \TE \bep(\vu) : \bep(\vu)
   + \frac12 \io \TV \bep(\vu(t)) : \bep(\vu(t)).
\end{align}
Thanks to the symmetry and coercivity properties of the 
tensors $\TE$ and $\TV$ (cf.~assumption~(a)), we then get~\eqref{l3:31str},
whence \eqref{l3:31d}. Summarizing, we have
\begin{equation}\label{l3:31c}
  \calT(\baru_n) \to \calT(\baru)
   \quext{strongly in }\,\Ssz,
\end{equation}
which actually holds for the whole sequence $\baru_n$.
Hence, the map $\calT$ is compact. This concludes
the proof of the lemma.
\end{proof}
\noindent%
As a consequence of the three lemmas, we can apply Schauder's fixed point theorem
to the map~$\calT$, at least for $\epsi\in(0,1)$ sufficiently small. This
provides existence of {\sl at least}\/ one local solution $(\bu\ee,z\ee)$ to 
the approximate system. To be precise, $(\bu\ee,z\ee)$ satisfies,
a.e.~in~$(0,T_0)$, and for any $\bphi\in\bV$,
\begin{equation}\label{u:weak}
  \duav{ \vu\eett, \bphi}
   + \io \TE \bep(\vu\ee) : \bep(\bphi) 
   + \io \TV \bep(\vu\eet) : \bep(\bphi)
   + \igc \gamma\ee (\vu\ee \cdot \vn) \bphi\cdot \vn
   + \igc (z\ee)^+ \vu\ee \cdot \bphi
   = \duav{\bg, \bphi}.
\end{equation}
Moreover, \eqref{damep} holds a.e.~on $(0,T_0)$ and the couple~$(\bu\ee,z\ee)$ 
also complies with the initial conditions~\eqref{iniz}.
\beos\label{en:equal}
 It is worth remarking that $(\bu\ee,z\ee)$ also satisfies an
 approximate version of the energy {\sl equality}. Indeed, comparing
 terms in \eqref{u:weak}, one can easily prove that 
 $\vu\ee\in H^2(0,T_0;\bV')$. In particular, this is sufficient in order for
 $\bphi=\vu\eet\in L^2(0,T_0;\bV)$ to be an admissible test function
 in \eqref{u:weak}. Hence, testing also \eqref{damep} by $z\eet$
 and proceeding as in the last part of Section~\ref{sec:main},
 we may infer
 \begin{equation}\label{energyee}
   \ddt\calE\ee(\vu\ee,\vu\eet,z\ee) + \calD\ee(\vu\eet,z\eet) = \duav{\bg,\bu\eet}
    - \igc \frac12 (z\ee)^- \vu\ee\cdot\vu\eet,
    \quext{a.e.~in }\,(0,T_0).
 \end{equation}
 where $\calE\ee$ and $\calD\ee$ are the approximate energy and 
 dissipation functionals and the last term appears in view of the 
 occurrence of the positive part in~\eqref{bcep}.
 Note that here all integrations by parts are fully justified  
 in view of the fact that $\beta$ and $\gamma$ have been replaced
 by their regularized counterparts. Integrating \eqref{energyee}
 in time, we then also obtain the additional regularity
 \begin{equation}\label{C1}
   \vu\ee \in C^1([0,T_0];\bH).
 \end{equation}
\eddos


\section{Global existence for the original system}
\label{sec:limit}

In order to prove Theorem~\ref{exist}, we will show that, 
as $\epsi\rightarrow0$, the regularized solutions $(\vu\ee,z\ee)$
constructed before tend, in a suitable way 
and up to the extraction of a subsequence,
to a weak solution to the original problem. It is worth noting from
the very beginning that, at least in principle, the functions
$(\vu\ee,z\ee)$ are defined only on some subinterval $(0,T_0)$
possibly smaller than $(0,T)$ and also possibly depending on 
$\epsi$. However, in view of the fact that we shall derive 
a set of a-priori estimates that are independent of $T_0$, 
standard extension arguments imply that $(\vu\ee,z\ee)$ 
can in fact be extended to the whole of~$(0,T)$,
and the same will hold for the limit solution $(\vu,z)$.
Hence, in order to reduce technical complications, 
we shall directly assume with no loss of generality 
that $(\vu\ee,z\ee)$ are defined over $(0,T)$ already. 
In particular, the approximate energy equality
\eqref{energyee} and the related regularity \eqref{C1}
turn out to hold on the whole of~$(0,T)$.

That said, we proceed with the proof, which
is subdivided into various steps,
presented as separate lemmas. 
\begin{lemm}[Extension of boundary functions]\label{lemma0}
 Let $\bpsi\in H^{1/2}(\Gamma_C;\RR^3)$. Then, there exists 
 $\bphi=:R\bpsi\in \bV$ such that $\bphi|_{\Gamma_C} = \bpsi$ in the sense
 of traces. Moreover, the operator $R:\bpsi \mapsto \bphi$ is linear and 
 continuous from $H^{1/2}(\Gamma_C;\RR^3)$ to $\bV$, namely there
 exists $c_\Omega>0$, depending only on
 $\Omega$, $\Gamma_C$, $\Gamma_N$, $\Gamma_D$, such that
 \begin{equation}\label{rilev}
    \| \bphi \|_{\bV} 
     \le c \| \bpsi \|_{H^{1/2}(\Gamma_C;\RR^3)}. 
 \end{equation}
\end{lemm}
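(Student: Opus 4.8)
The plan is to realize $R$ as a composition of three bounded linear maps: a genuine Sobolev extension operator from $\Gamma_C$ to the whole boundary $\Gamma$, multiplication by a fixed geometric cut-off that annihilates a neighbourhood of $\Gamma_D$, and the classical bounded right inverse of the trace operator $H^{1/2}(\Gamma;\RR^3)\to H^1(\Omega;\RR^3)$. The point of inserting the cut-off is exactly to land in $\bV$, i.e.\ to guarantee the homogeneous Dirichlet condition on $\Gamma_D$, and this is where the hypothesis $\dist(\Gamma_C,\Gamma_D)>0$ from assumption~(e) is used.

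Concretely, first I would fix once and for all a function $\chi\in C^\infty(\Gamma)$ with $0\le\chi\le1$, $\chi\equiv1$ on $\Gamma_C$ and $\chi\equiv0$ on a neighbourhood (in $\Gamma$) of $\ov{\Gamma_D}$; such a $\chi$ exists precisely because $\dist(\Gamma_C,\Gamma_D)>0$, and it depends only on $\Omega,\Gamma_C,\Gamma_N,\Gamma_D$. Multiplication by $\chi$ is bounded on $H^{1/2}(\Gamma;\RR^3)$ with norm controlled by $\chi$ alone. Next, by assumption~(e) the set $\Gamma_C$ is a relatively open subset of the compact smooth manifold $\Gamma$ whose relative boundary is a finite union of $C^1$ curves; hence $\Gamma_C$ is a Lipschitz (indeed $C^1$) subdomain of $\Gamma$, and therefore an $H^s$-extension domain for every $s\in[0,1]$. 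In particular there is a bounded linear operator $E_{\Gamma_C}\colon H^{1/2}(\Gamma_C;\RR^3)\to H^{1/2}(\Gamma;\RR^3)$ with $(E_{\Gamma_C}\bpsi)|_{\Gamma_C}=\bpsi$. I stress that here one \emph{cannot} simply extend by zero, since $H^{1/2}_{00}(\Gamma_C)$ is a proper subspace of $H^{1/2}(\Gamma_C)$; a bona fide extension operator is needed, obtained e.g.\ by Stein's extension theorem in local boundary charts for $\de\Gamma_C$ patched with a partition of unity. Then $\chi\,E_{\Gamma_C}\bpsi\in H^{1/2}(\Gamma;\RR^3)$ agrees with $\bpsi$ on $\Gamma_C$ (because $\chi\equiv1$ and $E_{\Gamma_C}\bpsi=\bpsi$ there) and vanishes near $\Gamma_D$.

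Finally I would invoke the standard trace lifting for the smooth bounded domain $\Omega$: there is a bounded linear operator $L\colon H^{1/2}(\Gamma;\RR^3)\to H^1(\Omega;\RR^3)$ with $(Lw)|_\Gamma=w$. Setting $R\bpsi:=L\big(\chi\,E_{\Gamma_C}\bpsi\big)$, one gets $R\bpsi\in H^1(\Omega;\RR^3)$ with trace on $\Gamma$ equal to $\chi\,E_{\Gamma_C}\bpsi$, hence $R\bpsi=\bzero$ on $\Gamma_D$, i.e.\ $R\bpsi\in\bV$, while $(R\bpsi)|_{\Gamma_C}=\bpsi$. Linearity is immediate, and chaining the three operator norms (each depending only on $\Omega$, $\Gamma_C$, $\Gamma_N$, $\Gamma_D$, through $\chi$, the charts defining $E_{\Gamma_C}$, and the geometry of $\Omega$) yields a constant $c=c_\Omega$ with $\|R\bpsi\|_{\bV}\le c\,\|\bpsi\|_{H^{1/2}(\Gamma_C;\RR^3)}$, which is \eqref{rilev}. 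The only genuinely delicate point — and the main, albeit mild, obstacle — is the construction of $E_{\Gamma_C}$: one must use a true extension operator rather than extension by zero, and check that the extension theorem applies to the merely $C^1$, possibly disconnected, relative boundary of $\Gamma_C$ in $\Gamma$; the regularity assumptions gathered in (e) are tailored exactly so that this is a classical fact.
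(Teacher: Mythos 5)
Your proof is correct and follows essentially the same route as the paper: the paper likewise extends $\bpsi$ from $\Gamma_C$ to $\Gamma$ by reflection-plus-cutoff (exploiting $\dist(\Gamma_C,\Gamma_D)>0$ so that the extension vanishes on $\Gamma_D$) and then lifts into $\Omega$, the only cosmetic difference being that the paper realizes the trace lifting $L$ concretely as the harmonic extension $-\Delta\bphi=\bzero$, $\bphi|_\Gamma=\widetilde\bpsi$, rather than invoking an abstract bounded right inverse of the trace operator.
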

\begin{proof} 
We first observe that, in view of the regularity assumptions~(e)
on $\Omega$, using extension by reflection and cutoff
arguments, $\bpsi$ can 
be extended to a function $\widetilde\bpsi$ defined on 
the whole of $\Gamma$ in such a way that $\widetilde\bpsi$
is~$0$ a.e.~on~$\Gamma_D$ and
\begin{equation}\label{rilev1}
   \| \widetilde\bpsi \|_{H^{1/2}(\Gamma;\RR^3)}
    \le c \| \bpsi \|_{H^{1/2}(\Gamma_C;\RR^3)}. 
\end{equation}
Moreover, we can build the extension in such a way that
the map $\bpsi\mapsto \widetilde\bpsi$ is linear. 
As a second step, we construct $\bphi$ as the solution to the 
elliptic problem
\begin{equation}\label{rilev2}
  -\Delta \bphi = \bzero~~\text{in }\,\Omega, \qquad
   \bphi = \widetilde\bpsi~~\text{on }\,\Gamma.
\end{equation}
Then, the desired properties follow from \eqref{rilev1} 
and standard elliptic regularity results.
\end{proof}
\begin{lemm}[Step 1: first a priori estimate]\label{lemma1}
 There exists a constant $M>0$ independent of $\epsi\in(0,1)$ and 
 of $T_0\in(0,T]$ such that
 \begin{subequations}\label{est:uz}
 \begin{align}
   & \|\vu^\eps\|_{W^{1,\infty}(0,T;\bH)}\leq M,\label{est1}\\
   & \|\vu^\eps\|_{H^{1}(0,T;\bV)}\leq M,\label{est2}\\
   & \|\vu_{tt}^\eps\|_{L^2(0,T;\bV_0')}\leq M,\label{est4}\\
   & \|\vu_{t}^\eps\|_{BV(0,T;\bH^{-2}_D)}\leq M,\label{est4*}\\
   & \|z\ee\|_{H^{1}(0,T;L^2(\Gamma_C))}\leq M.\label{est3}
 \end{align}
 Moreover, we have
 \begin{align}
   &\|\gamma\ee(\vu^\eps\cdot \bn)\|_{L^1(0,T;L^1(\Gamma_C))}\leq M\label{est:beta:eps},\\
   &\|\gamma\ee(\vu^\eps\cdot \bn)\|_{\mathcal H'}\leq M, \label{beta:indual}
 \end{align}
 and, for all $t\in(0,T)$,
 \begin{align}
   & \|\gamma\ee(\vu^\eps\cdot \bn)\|_{\mathcal H_t'}\leq M. \label{beta:indualt}
 \end{align}
 \end{subequations}
\end{lemm}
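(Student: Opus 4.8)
The plan is to obtain every bound by testing the regularized system \eqref{elaep}--\eqref{bcep} with the natural energy multipliers. Throughout we work at the approximate level, where the additional regularity recorded in Remark~\ref{en:equal} (in particular $\vu\eett\in L^2(0,T;\bV_0')$ and \eqref{C1}) makes all time integrations by parts rigorous; the only real task is to keep all constants independent of $\epsi$ (small) and of $T_0$. A preliminary observation is what decouples the estimates: testing \eqref{damep} by $(z\ee-1-a\epsi t)^+$ and integrating over $\Gamma_C$, on the set $\{z\ee>1+a\epsi t\}$ one has $\beta\ee(z\ee)=0$ (since $z\ee>0$), hence $\alpha\ee(z\eet)+z\eet=a-\tfrac12|\vu\ee|^2\le a$, and using the explicit formula \eqref{yosida} for $\alpha\ee$ and splitting according to the sign of $z\eet$ forces $z\eet<a\epsi$ there; consequently $\ddt\tfrac12\igc\big((z\ee-1-a\epsi t)^+\big)^2\le 0$, and since $z_0\le 1$ we get $z\ee\le 1+a\epsi T$ a.e.\ on $(0,T)\times\Gamma_C$, so that $(z\ee)^+\le 2$ for $\epsi$ small.

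Next I would run the energy estimate for $\vu\ee$. Taking $\bphi=\vu\eet$ in the weak form \eqref{ela:pf} (admissible by \eqref{C1}), and using the symmetry \eqref{symm}, the boundary condition \eqref{bcep} and the coercivity \eqref{korn}, one obtains, with $Y:=\tfrac12\|\vu\eet\|_{\bH}^2+\tfrac12\io\TE\bep(\vu\ee):\bep(\vu\ee)+\igc\gammaciapo\ee(\vu\ee\cdot\bn)$, an inequality of the form $\ddt Y+\kappa\|\vu\eet\|_{\bV}^2\le -\igc(z\ee)^+\vu\ee\cdot\vu\eet+\duav{\bg,\vu\eet}$. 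The coupling term is the delicate one: using $(z\ee)^+\le 2$ together with the trace--interpolation inequality $\|v\|_{L^2(\Gamma_C)}\le c\|v\|_{\bH}^{1/2}\|v\|_{\bV}^{1/2}$ and Young's inequality, it is bounded by $\tfrac{\kappa}{4}\|\vu\eet\|_{\bV}^2+c\,Y$; the forcing term is handled by $\duav{\bg,\vu\eet}\le\tfrac{\kappa}{4}\|\vu\eet\|_{\bV}^2+c\|\bg\|_{\bV'}^2$. Since $\betaciapo\ee,\gammaciapo\ee\ge 0$ by \eqref{limbasso} (so $Y$ is coercive) and the Cauchy data are controlled as in \eqref{defiU}, \eqref{defiU2}, Gronwall's lemma on $(0,T)$ yields \eqref{est1}--\eqref{est2} with $M$ depending only on $U$, $\|\bg\|_{L^2(0,T;\bV')}$, $T$, $\kappa$ and $\Omega$. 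Feeding $\|\vu\ee\|_{L^\infty(0,T;\bV)}\le M$ into the test of \eqref{damep} by $z\eet$ (the same computation as in \eqref{l1:11}, with $\|\vu\ee\|_{L^4(\Gamma_C)}^4\le c\|\vu\ee\|_{\bV}^4$ by \eqref{cont:trace} and $\igc\betaciapo\ee(z_0)\le 2Z$ from \eqref{z0Mee}) bounds $\|z\eet\|_{L^2(0,T;L^2(\Gamma_C))}$, hence also $\|z\ee\|_{L^\infty(0,T;L^2(\Gamma_C))}$, giving \eqref{est3}. Restricting \eqref{ela:pf} to $\bphi\in\bV_0$ (all boundary terms disappear) gives $\|\vu\eett\|_{\bV_0'}\le\|\bg\|_{\bV'}+c\|\vu\eet\|_{\bV}+c\|\vu\ee\|_{\bV}$ a.e.\ in time, whence \eqref{est4}.

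The delicate part — and the step I expect to be the main obstacle — is the control of $\gamma\ee(\vu\ee\cdot\bn)$, since $\vu\ee$ carries only the critical $H^1$-regularity and its boundary trace is merely in $L^4$. For \eqref{est:beta:eps} I would fix $r_0<0$ in the interior of $D(\gamma)$, set $\bw_0:=R(r_0\bn)\in\bV$ via Lemma~\ref{lemma0} (so $\bw_0\cdot\bn=r_0$ on $\Gamma_C$), take $\bphi=\vu\ee-\bw_0$ in \eqref{ela:pf}, integrate over $(0,T)$, and integrate the term $\duav{\vu\eett,\cdot}$ by parts in time (legitimate by \eqref{C1}); invoking the standard coercivity bound for Yosida approximations, $\gamma\ee(s)(s-r_0)\ge c_0|\gamma\ee(s)|-c_1$ with $c_0>0$, $c_1\ge0$ depending only on $\gamma$ and $r_0$, the boundary integral controls $\|\gamma\ee(\vu\ee\cdot\bn)\|_{L^1(0,T;L^1(\Gamma_C))}$ from above by quantities already bounded (the elastic terms, the endpoint terms $(\vu\eet(T),\cdot)$ and $(\vu_1,\cdot)$, $\|\vu\eet\|_{L^2(0,T;\bH)}^2$, the $(z\ee)^+\vu\ee$ term, and the $\bg$ term). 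For \eqref{beta:indual}--\eqref{beta:indualt}, rather than the $L^1$-bound I would read $\gamma\ee(\vu\ee\cdot\bn)\bn$ off \eqref{bcep}: given $\psi\in\calH$, Lemma~\ref{lemma0} applied at each time level produces $\bphi=R(\psi\bn)\in\calV$ with $\bphi\cdot\bn=\psi$ on $\Gamma_C$ and $\|\bphi\|_{\calV}\le c\|\psi\|_{\calH}$; substituting this $\bphi$ into \eqref{ela:pf}, integrating over $(0,T)$ and integrating once more by parts in time, $\duavw{\gamma\ee(\vu\ee\cdot\bn),\psi}$ becomes a sum of terms each bounded by $cM\|\bphi\|_{\calV}\le cM\|\psi\|_{\calH}$, and taking the supremum over $\psi$ gives \eqref{beta:indual}; the identical computation on $(0,t)$, with the same constants, yields \eqref{beta:indualt}. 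Finally \eqref{est4*} follows from \eqref{est:beta:eps}: since $H^{3/2}(\Gamma_C)\hookrightarrow L^\infty(\Gamma_C)$, testing \eqref{ela:pf} by $\bphi\in\bH^2_D$ gives $\|\vu\eett(t)\|_{\bH^{-2}_D}\le c\big(\|\bg(t)\|_{\bV'}+\|\vu\eet(t)\|_{\bV}+\|\vu\ee(t)\|_{\bV}+\|\gamma\ee(\vu\ee(t)\cdot\bn)\|_{L^1(\Gamma_C)}\big)$, so integrating in time shows $\vu\eet\in W^{1,1}(0,T;\bH^{-2}_D)\hookrightarrow BV(0,T;\bH^{-2}_D)$ with the desired bound.
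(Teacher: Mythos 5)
Your proposal diverges from the paper's argument at the very beginning, and the divergence introduces a genuine gap for the class of graphs $\beta$ actually allowed. You open with a pointwise bound $(z\ee)^+\le C$ obtained by testing \eqref{damep} with $(z\ee-1-a\epsi t)^+$, and the entire purpose of that step is to turn the coupling term $\igc(z\ee)^+\vu\ee\cdot\vu\eet$ into something benign, so that the energy estimate for $\vu\ee$ decouples from the one for $z\ee$. But the step relies on the claim that $\beta\ee(z\ee)=0$ (or at least $\ge 0$) on $\{z\ee>1+a\epsi t\}$ ``since $z\ee>0$''. That is a property of the Yosida regularization of the \emph{particular} graph $\de I_{[0,+\infty)}$, not of a general $\beta$ satisfying assumption~(b). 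For instance $\betaciapo(r)=(r-2)^2$ for $r\ge0$ and $+\infty$ otherwise is perfectly admissible (nonnegative, convex, $\ov{D(\beta)}=[0,+\infty)$, and $\beta^0(z_0)$ bounded for $z_0\in[0,1]$), yet $\beta\ee(r)<0$ on $(0,2)$, so the sign you need fails exactly on the set where you use it. Without the pointwise bound the decoupled estimate does not close: the best available control is $|\igc(z\ee)^+\vu\ee\cdot\vu\eet|\le\|(z\ee)^+\|_{L^2(\Gamma_C)}\|\vu\ee\|_{L^4(\Gamma_C)}\|\vu\eet\|_{L^4(\Gamma_C)}$, whose Gronwall coefficient involves $\|(z\ee)^+\|_{L^2(\Gamma_C)}^2$, a quantity you have not yet bounded at that stage.

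The paper avoids this by running a \emph{coupled} estimate from the start: take $\bphi=\vu\eet$ in \eqref{u:weak} and simultaneously multiply \eqref{damep} by $\de_t(z\ee)^+=H(z\ee)z\eet$, then add. The key cancellation is
\begin{equation*}
 -\frac12\igc|\vu\ee|^2\,\de_t(z\ee)^+
  \;=\; -\frac12\,\ddt\igc (z\ee)^+|\vu\ee|^2 \;+\; \igc(z\ee)^+\vu\ee\cdot\vu\eet,
\end{equation*}
so the dangerous term coming from the elasticity equation is cancelled identically, while the first piece moves to the left as the nonnegative Lyapunov term $\frac12\igc(z\ee)^+|\vu\ee|^2$. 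Together with $\igc\alpha\ee(z\eet)\,\de_t(z\ee)^+\ge0$, $z\eet\,\de_t(z\ee)^+=|\de_t(z\ee)^+|^2$, and the truncated potential $B\ee$ from \eqref{Heav} (which handles $\beta\ee$ even when it changes sign on $(0,\infty)$), this gives \eqref{est1}--\eqref{est2} for every admissible $\beta$, without any pointwise control on $z\ee$. The remainder of your write-up is correct and essentially matches the paper: testing \eqref{damep} with $z\eet$ then gives \eqref{est3}; $\bphi\in\bV_0$ gives \eqref{est4}; the lifting $\bphi=R(\psi\bn)$ gives \eqref{beta:indual}--\eqref{beta:indualt} exactly as in the paper; your $L^1$-bound via $\bphi=\vu\ee-\bw_0$ with $\bw_0=R(r_0\bn)$ and the coercivity $\gamma\ee(s)(s-r_0)\ge c_0|\gamma\ee(s)|-c_1$ is a mild variant of the paper's $\bphi=\vu\ee+\bm$, $\bm=R(\bn)$; and $\bphi\in\bH^2_D$ gives \eqref{est4*}. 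Once you replace your opening move by the coupled test, the rest goes through.
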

\begin{proof}
Let us note as $\betaciapo\ee$ and $\gammaciapo\ee$ suitable antiderivatives
of $\beta\ee$ and $\gamma\ee$, respectively. Then,
using the standard relation $| b\ee (r) | \le |b^0(r)| $ holding 
for any maximal monotone graph $b\subset \RR\times \RR$ (cf.~\cite{Br}),
and recalling assumption~\eqref{limbasso}, it is not difficult to 
prove that, up to suitable choices of the integration constants,
we can assume $\betaciapo\ee$ and $\gammaciapo\ee$ to be nonnegative.

That said, let us take $\bphi=\vu\eet$ in \eqref{u:weak}, multiply \eqref{damep} 
by $\partial_t(z\ee)^+$ (i.e., the time derivative of the positive part
of $z\ee$), sum the resulting expressions, and integrate in time. Noting as 
$H$ the Heaviside function, we then observe that
\begin{equation}\label{Heav}
  \igc \beta\ee(z\ee) \partial_t(z\ee)^+
   = \igc \beta\ee(z\ee) H(z\ee) \partial_t(z\ee)
   = \ddt \igc B\ee(z\ee),
\end{equation}
where the function $B\ee(r)$ coincides with $\betaciapo\ee(r)$ for $r>0$
and is identically equal to $\betaciapo\ee(0)$ for $r\le 0$. 
Hence, we infer
\begin{align}\no
  & \frac{1}{2}\|\vu\eet(t)\|^2
   + \frac{1}{2} \int_\Om\TE \bep(\vu\ee(t)):\bep(\vu^\eps(t))
   + \int_0^t\int_\Om\TV \bep(\vu_t^\eps(t)):\bep(\vu_t^\eps(t))
   + \igc \gammaciapo\ee(\vu^\eps(t)\cdot \bn) \\
 \no  
  & \mbox{}~~~~~ 
   + \frac{1}{2}\igc (z^\eps)^+(t)|\vu^\eps(t)|^2
   + \int_0^t \| \de_t(z\ee)^+ \|^2
   + \int_0^t\igc \alpha^\eps(z_t^\eps) \de_t(z\ee)^+
   + \igc B\ee(z^\eps(t))\\
 \no
  & = a \itt \igc \partial_t(z\ee)^+
   + \int_0^t \langle \bg,\vu_t^\eps\rangle
   + \frac{1}{2} \|\vu_1\|^2
   + \frac{1}{2}\int_\Om \TE \bep(\vu_0):\bep(\vu_0)
   + \igc \gammaciapo^\eps(\vu_0\cdot \bn)
   + \igc B^\eps(z_0)\\
 \label{apr10} 
  & \mbox{}~~~~~
  + \frac{1}{2} \igc z_0 |\vu_0|^2
  \leq c 
   + \frac{1}{2}\int_0^t \| \de_t (z\ee)^+ \|^2
   + \frac{\kappa}{2} \int_0^t\|\nabla \vu_t^\eps\|^2
   + c \| \bg \|_{L^2(0,T;\bV')}^2,
\end{align}
where, in the last line, we have used Young's, Poincar\'e's and Korn's inequalities
together with the analogue of \eqref{z0Mee}, \eqref{defiU2}, and the fact that
\begin{equation}\label{eq:xi11}
 \frac{1}{2} \igc z_0 |\vu_0|^2
  \le \| z_0 \|_{L^2(\Gamma_C)} \| \vu_0 \|_{L^4(\Gamma_C;\RR^3)}^2 
  \le c \| z_0 \|_{L^2(\Gamma_C)} \| \vu_0 \|_{\bV}^2  
  \le c.
\end{equation}
Hence, recalling \eqref{elas}, we easily obtain \eqref{est1} and \eqref{est2}.

Let us now test \eqref{damep} by $z\eet$. Then, proceeding as 
before and noting that
\begin{align}\no
  \bigg| \itt\igc z\eet | \vu\ee |^2 \bigg|
   & \le \| z\eet \|_{L^2(0,T;L^2(\Gamma_C))} 
    \| \vu\ee \|_{L^4(0,T;L^4(\Gamma_C;\RR^3))}^2\\
 \label{apr10b}
    & \le \frac12 \| z\eet \|_{L^2(0,T;L^2(\Gamma_C))}^2
    + c \| \vu\ee \|_{L^4(0,T;\bV)}^4,
\end{align}
using \eqref{est2} to estimate the last term, we readily infer \eqref{est3}. 

Next, we choose $\bphi\in \bV_0$ in \eqref{u:weak}. 
Then, the boundary integrals go away and a simple comparison
of terms permits us to obtain \eqref{est4}.

Let us now prove \eqref{beta:indual}. To this aim, let
$\psi\in \mathcal H$. Then, in view of the regularity 
assumptions~(e), we can think the outer unit normal $\bn$ to~$\Omega$
on~$\Gamma_C$ to be the trace of a smooth function, denoted with the 
same symbol, defined on an open neighbourhood $\Lambda\subset \RR^3$
of $\Gamma_C$. As a consequence, $\bpsi:=\psi\bn$ lies 
in $H^1(0,T;H^{1/2}(\Gamma_C;\RR^3))$.
Hence, applying Lemma~\ref{lemma0}, we may
construct $\bphi=R\bpsi\in H^1(0,T;\bV)$ such that 
$\bphi|_{\Gamma_C}=\bpsi$) and 
\begin{equation}\label{rilev3}
  \| \bphi \|_{H^1(0,T;\bV)} 
   \le c \| \bpsi \|_{H^1(0,T;H^{1/2}(\Gamma_C;\RR^3))} 
   \le c \| \psi \|_{\calH}. 
\end{equation}
Taking such a $\bphi$ in \eqref{u:weak}, integrating in time,
performing suitable integrations by parts, comparing terms, 
and applying \eqref{elas} and Korn's inequality, we then have
\begin{align}\nonumber
  & |\duavw{ \gamma\ee(\vu^\eps\cdot \bn),\psi }|
   \leq \| \vu^\eps_t\|_{L^2(0,T;\bH)}\|\bphi_t\|_{L^2(0,T;\bH)}
   + \| \vu^\eps_t(T)\|_{\bH} \|\bphi(T)\|_{\bH}
   + \| \vu_1\|_{\bH} \| \bphi(0)\|_{\bH} \\
 \nonumber
   & \mbox{}~~~~~
    + c\| \nabla \vu^\eps_t\|_{L^2(0,T;\bH)} \|\nabla \bphi\|_{L^2(0,T;\bH)}
    + c\| \nabla \vu^\eps\|_{L^2(0,T;\bH)} \| \nabla \bphi\|_{L^2(0,T;\bH)}\\
 \nonumber
    & \mbox{}~~~~~
   + \| (z^\eps)^+ \vu^\eps\|_{L^2(0,T;L^2(\Gamma_C;\RR^3))} \| \bphi \|_{L^2(0,T;L^2(\Gamma_C;\RR^3))}
   +\| \bg \|_{L^2(0,T;\bV')} \|\bphi\|_{L^2(0,T;\bV)}\\
 \label{gamma:est}
   & \leq c \|\bphi\|_{\mathcal V} \leq c\|\psi\|_{\mathcal H},
\end{align}
where the constants are provided by \eqref{est1}-\eqref{est4},
\eqref{est3}, and the continuity of the trace operator. Hence, 
\eqref{beta:indual} follows. Repeating the same argument on subintervals,
we also obtain \eqref{beta:indualt}.

Finally, to get \eqref{est:beta:eps} let us apply Lemma~\ref{lemma0}
to the function $\bpsi=\bn$. As noted above, we may assume 
$\bn \in H^1(0,T;H^{1/2}(\Gamma_C;\RR^3))$. Hence, we obtain an extension
$\bm\in H^1(0,T;\bV)$ of $\bn$ to the whole domain $\Omega$. Let us
now plug $\bphi=\bu\ee + \bm$ into \eqref{u:weak}. Then, estimating
the resulting \rhs\ as in~\eqref{gamma:est}, it is not difficult
to arrive~at
\begin{equation}\label{dalbasso1}
  \itt\igc \gamma\ee(\vu^\eps\cdot \bn) (\bu\ee + \bm) \cdot \bn
    \leq c \| \bu\ee + \bm \|_{\mathcal V}
    \leq c,
\end{equation}
the last inequality following from \eqref{est2} and Lemma~\ref{lemma0}
applied to $\bn$. Then, we claim that the \lhs\ 
can be estimated from below as follows:
\begin{equation}\label{dalbasso2}
  \itt\igc \gamma\ee(\vu^\eps\cdot \bn) (\bu\ee + \bm) \cdot \bn
   = \itt\igc \gamma\ee(\vu^\eps\cdot \bn) (\bu\ee \cdot \bn + 1) 
   \ge \kappa_0 \| \gamma\ee(\vu^\eps\cdot \bn) \|_{L^1(0,T:L^1(\Gamma_C))} - c.
\end{equation}
with $\kappa_0>0$ and $c\ge 0$ independent of $\epsi$. 
To prove the last inequality in \eqref{dalbasso2} we can use 
the assumption $\ov{D(\gamma)}=(-\infty,0]$ with the coercivity
\eqref{limbasso}. Indeed, on the one hand this implies that,
for $r \ge -1/2$, there holds
\begin{equation}\label{dalbasso4}
  \gamma\ee(r) (r + 1) 
   \ge \frac12 | \gamma\ee (r) |.
\end{equation}
On the other hand, due to \eqref{limbasso}, 
either $\lim_{r\to -\infty} \gamma(r) < 0$
(and the same holds for $\gamma\ee$), whence
we can reason as in \eqref{dalbasso4} also for $r<<0$,
or it is $\lim_{r\to -\infty} \gamma(r) = 0$ 
(and, again, the same holds for $\gamma\ee$), so that there is
nothing to prove because in that case the $L^1$-norm 
of $\gamma\ee(\vu^\eps\cdot \bn)$ may only explode on 
the set where $\vu^\eps\cdot \bn \ge -1/2$. Hence,
we have proved \eqref{dalbasso2}, which, combined
with \eqref{dalbasso1}, gives \eqref{est:beta:eps}.

Finally, we take $\bphi\in \bH^2_D$
in \eqref{u:weak}. Then, noting that 
$\| \bphi \|_{L^\infty(\Gamma_C)} \le c \| \bphi \|_{\bH^2_D}$,
a comparison of terms in \eqref{u:weak}, together with estimate 
\eqref{est:beta:eps}, permits us to obtain \eqref{est4*},
which concludes the proof of the lemma.
\end{proof}
\begin{lemm}[Step 2: second a priori estimate]\label{lemma2}
 There exists a constant $M>0$ independent of $\epsi\in(0,1)$ and 
 of $T_0\in(0,T]$ such that
 \begin{subequations}\label{est:xi}
 \begin{align}
   &\|\beta^\eps(z^\eps)\|_{L^\infty(0,T;L^2(\Gamma_C))}\leq M,  \label{est:beta:z}\\
   &\|\alpha^\eps(z_t^\eps)\|_{L^\infty(0,T;L^2(\Gamma_C))}\leq M\label{est:alpha:zt},\\
   & \| z_t^\eps \|_{L^\infty(0,T;L^2(\Gamma_C))}\leq M\label{est:zt2}.
 \end{align}
 \end{subequations}
\end{lemm}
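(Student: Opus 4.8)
The plan is to obtain one further a priori estimate for the regularized problem, from which \eqref{est:beta:z}, \eqref{est:alpha:zt} and \eqref{est:zt2} all follow at once. The crucial point is the choice of multiplier in \eqref{damep}: the instinctive candidate $z_{tt}^\eps$ (aimed at a ``second energy'' estimate) does \emph{not} close, since it generates a cross term $\itt\igc\beta^\eps(z^\eps)z_t^\eps$ of indefinite sign; instead I would test \eqref{damep} by $\partial_t\beta^\eps(z^\eps)=(\beta^\eps)'(z^\eps)z_t^\eps$, which is an admissible multiplier because $\beta^\eps$ is Lipschitz and $z^\eps\in H^1(0,T;L^2(\Gamma_C))$ by \eqref{est3}. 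Setting $f^\eps:=a-\frac12|\vu^\eps|^2$ and integrating over $\Gamma_C\times(0,t)$, one is led to
\begin{align*}
 & \frac12\|\beta^\eps(z^\eps(t))\|^2_{L^2(\Gamma_C)}
  + \itt\igc \alpha^\eps(z_t^\eps)\,(\beta^\eps)'(z^\eps)\,z_t^\eps
  + \itt\igc (z_t^\eps)^2\,(\beta^\eps)'(z^\eps) \\
 & \qquad\qquad\qquad
  = \frac12\|\beta^\eps(z_0)\|^2_{L^2(\Gamma_C)}
  + \itt\igc f^\eps\,(\beta^\eps)'(z^\eps)\,z_t^\eps ,
\end{align*}
in which the two integral terms on the left are nonnegative, as $(\beta^\eps)'\ge0$ and $\alpha^\eps(r)r\ge0$, while $\|\beta^\eps(z_0)\|_{L^2(\Gamma_C)}\le\|\beta^0(z_0)\|_{L^2(\Gamma_C)}\le c$ by \eqref{hp:z0}. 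The point of this choice is that $\|\beta^\eps(z^\eps)\|^2$ is isolated and its estimate decouples from the other two unknowns.

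To treat the right-hand side I would integrate by parts in time. This is legitimate since $f^\eps\in H^1(0,T;L^2(\Gamma_C))$ with $\partial_t f^\eps=-\vu^\eps\cdot\vu_t^\eps$, and moreover $\|f^\eps\|_{L^\infty(0,T;L^2(\Gamma_C))}$ and the product $\|\vu^\eps\|_{L^\infty(0,T;L^4(\Gamma_C))}\|\vu_t^\eps\|_{L^2(0,T;L^4(\Gamma_C))}$ are bounded uniformly in $\eps$ — a consequence of \eqref{est2}, the embedding $H^1(0,T;\bV)\hookrightarrow C([0,T];\bV)$, and the trace continuity \eqref{cont:trace}. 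After integrating by parts, the term $\igc f^\eps(t)\beta^\eps(z^\eps(t))$ is absorbed, via Young's inequality, into $\frac14\|\beta^\eps(z^\eps(t))\|^2_{L^2(\Gamma_C)}$ plus a constant; the boundary term at $t=0$ is bounded by \eqref{hp:u0} and \eqref{hp:z0}; and the leftover term $\itt\igc\vu^\eps\cdot\vu_t^\eps\,\beta^\eps(z^\eps)$ is estimated by $\itt\|\vu^\eps\|_{L^4(\Gamma_C)}\|\vu_t^\eps\|_{L^4(\Gamma_C)}\big(1+\|\beta^\eps(z^\eps)\|^2_{L^2(\Gamma_C)}\big)$, where the scalar factor $\|\vu^\eps\|_{L^4(\Gamma_C)}\|\vu_t^\eps\|_{L^4(\Gamma_C)}$ belongs to $L^1(0,T)$ with $\eps$-independent norm. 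Gronwall's lemma then yields \eqref{est:beta:z}.

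Once \eqref{est:beta:z} is available, \eqref{est:zt2} and \eqref{est:alpha:zt} become purely algebraic consequences of \eqref{damep}. Writing it as $(\Id+\alpha^\eps)(z_t^\eps)=f^\eps-\beta^\eps(z^\eps)=:g^\eps$, one has $\|g^\eps\|_{L^\infty(0,T;L^2(\Gamma_C))}\le c$; since $(\Id+\alpha^\eps)^{-1}=A_\eps$ is nonexpansive and fixes $0$ (see \eqref{yosida}), pointwise $|z_t^\eps|\le|g^\eps|$, which gives \eqref{est:zt2}, and then $\alpha^\eps(z_t^\eps)=g^\eps-z_t^\eps$ gives \eqref{est:alpha:zt}.

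Accordingly, the only genuine obstacle is finding the right multiplier: once one realizes that $\partial_t\beta^\eps(z^\eps)$ — rather than $z_{tt}^\eps$ — is the one that produces all the new terms with a favourable sign and thus decouples the three bounds, everything else is routine. The remaining care is in checking that every constant above is independent of $\eps$, which is guaranteed by the uniformity in Lemma~\ref{lemma1} (recalling that, by the reduction at the start of this section, all functions may be taken on the whole interval $(0,T)$), and in the standard justification — by approximation of $\beta^\eps$, or directly from its Lipschitz continuity — of the chain-rule identities $\partial_t\beta^\eps(z^\eps)=(\beta^\eps)'(z^\eps)z_t^\eps$ and $\beta^\eps(z^\eps)(\beta^\eps)'(z^\eps)z_t^\eps=\frac12\partial_t|\beta^\eps(z^\eps)|^2$ used above.
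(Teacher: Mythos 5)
Your proposal is correct and follows essentially the same route as the paper: you test \eqref{damep} by $\partial_t\beta^\eps(z^\eps)=(\beta^\eps)'(z^\eps)z_t^\eps$, integrate by parts in time to handle the source term, absorb the evaluated term and use Gronwall to get \eqref{est:beta:z}, and then derive \eqref{est:zt2} and \eqref{est:alpha:zt} from the resolvent representation $z_t^\eps=A_\eps\big(a-\tfrac12|\vu^\eps|^2-\beta^\eps(z^\eps)\big)$ by comparison, using nonexpansivity of $A_\eps$ and $A_\eps(0)=0$. The only (harmless) difference is the precise form of Young's inequality on the cross term, which the paper applies at the time-integrated level while you apply it pointwise in time; both yield the same conclusion.
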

\begin{proof}
Let us multiply equation \eqref{dam:pf} by $\ddt \beta^\eps(z^\eps)=(\beta^\eps)'(z^\eps)z_t^\eps$, 
so to obtain
\begin{align}
 & \frac{1}{2} \ddt \igc |\beta^\eps(z^\eps)|^2
  + \igc (\beta^\eps)'(z^\eps) |z_t^\eps|^2
  + \igc (\beta^\eps)'(z^\eps)\alpha^\eps(z_t^\eps)z_t^\eps\nonumber\\
 \label{bg11}  
 & = \ddt \igc \Big(a-\frac{1}{2}|\vu^\eps|^2\Big)\beta^\eps(z^\eps)
  + \igc ( \vu^\eps\cdot \vu_t^\eps ) \beta^\eps(z^\eps),
\end{align}
whence, integrating over $(0,t)$, $0<t\le T$, and using that
$|\beta\ee(\cdot)|\le |\beta^0(\cdot)|$ together 
with assumption \eqref{hp:z0}, we get
\begin{align}
  & \frac{1}{4} \|\beta^\eps(z^\eps(t))\|_{L^2(\Gamma_C)}^2
   + \int_0^t \igc (\beta^\eps)'(z^\eps)|z_t^\eps|^2
   + \int_0^t \igc (\beta^\eps)'(z^\eps)\alpha^\eps(z_t^\eps)z_t^\eps \nonumber\\
 \nonumber
  & \mbox{}~~~~~\leq \frac{1}{2} \|\beta^\eps(z_0) \|_{L^2(\Gamma_C)}^2
   + \int_\Gamma \Big( a-\frac{1}{2}|\vu^\eps(t)|^2\Big)^2\\
 \no
  & \mbox{}~~~~~~~~~~ + \|\vu^\eps\|_{L^\infty(0,T;L^4(\Gamma_C;\RR^3))}
      \Big( \frac12\| \vu_t^\eps\|_{L^2(0,T;L^4(\Gamma_C;\RR^3))}^2 
          + \frac12 \|\beta^\eps(z^\eps)\|_{L^2(0,T;L^2(\Gamma_C))}^2\Big)\\
 \label{bg12}     
  & \mbox{}~~~~~ 
   \leq c \big( 1 + \|\beta^\eps(z^\eps)\|^2_{L^2(0,T;L^2(\Gamma_C))} \big),
\end{align}
where we have also used Young's inequality and the estimates \eqref{est:uz}. 
Now, since $(\beta^\eps)'\geq 0$ and $\alpha^\eps$ is monotone and satisfies
$\alpha^\eps(0)=0$, we may use Gronwall's lemma to obtain \eqref{est:beta:z}.
Moreover, applying the nonexpansive operator $(\Id + \alpha\ee)^{-1}$,
we may rewrite \eqref{dam:pf} in the form
\begin{equation}\label{l3:23b}
  z\ee_t = (\Id + \alpha\ee)^{-1} 
     \Big( a - \frac12 | \vu\ee |^2 - \beta\ee(z\ee) \Big).     
\end{equation}
Comparing terms in \eqref{l3:23b}, we then get \eqref{est:zt2}. 
With this information at disposal, we go back to~\eqref{dam:pf}
and a further comparison argument gives also \eqref{est:alpha:zt}.
\end{proof}
\begin{lemm}[Step 3: converging subsequence]\label{lemma3}
 There exist limit functions $(\vu,z,\eta,\xi_1,\xi_2)$ such that, 
 for a (non relabelled) subsequence of $\eps\rightarrow0$, there holds
 \begin{subequations}
 \begin{align}
   & \vu^\eps\rightharpoonup \vu \quext{weakly in }\,H^1(0,T;\bV)~~\text{and weakly star in }\,W^{1,\infty}(0,T;\bH),\label{conv1}\\
   & \vu_t^\eps\rightharpoonup \vu_t \quext{weakly in }\,H^1(0,T;\bV_0')~~\text{and weakly star in }\,BV(0,T;\bH_D^{-2}),\label{conv3}\\
   & z^\eps\rightarrow z \quext{weakly star in }\,W^{1,\infty}(0,T;L^2(\Gamma_C)), \label{convz}\\
   & \gamma^\eps(\vu^\eps\cdot \bn)\rightharpoonup \eta \quext{weakly in }\,\mathcal H',\label{conv:betadual}\\
   & \beta^\eps(z^\eps)\rightharpoonup \xi_1 \quext{weakly star in }\,L^\infty(0,T;L^2(\Gamma_C)),\label{conv:xi1}\\
   &\alpha^\eps(z_t^\eps)\rightharpoonup \xi_2 \quext{weakly star in }L^\infty(0,T;L^2(\Gamma_C)),\label{conv:xi2}
 \end{align}
 \end{subequations}
 together with
 \begin{align}\label{strongu}
   & \vu_t^\eps\rightarrow \vu_t \quext{strongly in }\,L^2(0,T;\bH),\\
  \label{strongu2}
   & \vu_t^\eps(t)\rightharpoonup \vu_t(t) \quext{weakly in }\,\bH,~~\text{for all }\,t\in[0,T].
 \end{align}
 Moreover for all $t\in(0,T)$ there exists $\eta_{(t)} \in\mathcal H_t'$ such that, for 
 the same subsequence of $\eps\searrow 0$ considered before,
 \begin{align}
   & \gamma^\eps(\vu^\eps\cdot \bn)\llcorner_{(0,t)}\rightharpoonup \eta_{(t)}\quext{weakly in }\,\mathcal H_t'.\label{conv:betadual:t}
 \end{align}
\end{lemm}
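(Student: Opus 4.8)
The plan is to read off all the stated convergences from the $\epsi$-uniform bounds of Lemmas \ref{lemma1}--\ref{lemma2}, combining weak/weak-$*$ sequential compactness for the linear quantities, an Aubin--Lions--Simon argument for the strong convergences, and — for the localized functionals $\eta_{(t)}$ — a passage to the limit inside the \emph{truncated} approximate equation \eqref{u:weak}, in the spirit of \cite{BRSS}.

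\emph{Step A (weak extractions).} First I would fix, once and for all, a single subsequence $\epsi\searrow 0$ along which \emph{all} the following weak limits exist simultaneously, obtained by a diagonal procedure from the Banach--Alaoglu--Bourbaki theorem (sequential weak-$*$ compactness in the duals of separable spaces, resp.\ reflexivity): $\vu^\epsi\rightharpoonup\vu$ in $H^1(0,T;\bV)$ and weakly-$*$ in $W^{1,\infty}(0,T;\bH)$ by \eqref{est1}--\eqref{est2}; $\vu_t^\epsi\rightharpoonup\vu_t$ in $H^1(0,T;\bV_0')$ and weakly-$*$ in $BV(0,T;\bH_D^{-2})$ by \eqref{est4}--\eqref{est4*}; $z^\epsi\rightharpoonup z$ weakly-$*$ in $W^{1,\infty}(0,T;L^2(\Gamma_C))$ by \eqref{est:zt2}; $\gamma^\epsi(\vu^\epsi\cdot\bn)\rightharpoonup\eta$ in $\mathcal H'$ by \eqref{beta:indual}; $\beta^\epsi(z^\epsi)\rightharpoonup\xi_1$ and $\alpha^\epsi(z_t^\epsi)\rightharpoonup\xi_2$ weakly-$*$ in $L^\infty(0,T;L^2(\Gamma_C))$ by \eqref{est:beta:z}--\eqref{est:alpha:zt}; and, in addition, $(z^\epsi)^+\rightharpoonup\zeta$ weakly-$*$ in $L^\infty(0,T;L^2(\Gamma_C))$ for some $\zeta$ (this last limit is auxiliary — it is needed only in Step C — and will be identified with $(z)^+$ in a later lemma). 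That the weak limit of a time derivative is the distributional time derivative of the corresponding limit is immediate. This produces \eqref{conv1}, \eqref{conv3}, \eqref{convz}, \eqref{conv:betadual}, \eqref{conv:xi1}, \eqref{conv:xi2}.

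\emph{Step B (strong convergences).} For \eqref{strongu}, the family $\{\vu_t^\epsi\}$ is bounded in $L^2(0,T;\bV)$ by \eqref{est2}, with $\partial_t\vu_t^\epsi=\vu_{tt}^\epsi$ bounded in $L^2(0,T;\bV_0')$ by \eqref{est4}; since $\bV\hookrightarrow\bH$ compactly (Rellich) and $\bH\hookrightarrow\bV_0'$ continuously, the Aubin--Lions--Simon lemma gives relative compactness of $\{\vu_t^\epsi\}$ in $L^2(0,T;\bH)$, and the limit is $\vu_t$ by uniqueness of weak limits. For the pointwise statement \eqref{strongu2}, $\{\vu_t^\epsi\}$ is bounded in $L^\infty(0,T;\bH)$ by \eqref{est1} and equi-H\"older continuous of exponent $1/2$ with values in $\bV_0'$, because $\vu_t^\epsi(t)-\vu_t^\epsi(s)=\int_s^t\vu_{tt}^\epsi$ and $\vu_{tt}^\epsi$ is bounded in $L^2(0,T;\bV_0')$; the classical argument for weakly continuous Banach-space--valued functions (Ascoli on a countable dense set of times combined with weak compactness of bounded sets in the Hilbert space $\bH$, then extension to every $t$ by equicontinuity) yields a weakly continuous $v\colon[0,T]\to\bH$ with $\vu_t^\epsi(t)\rightharpoonup v(t)$ in $\bH$ for every $t$; by \eqref{strongu}, $v=\vu_t$ a.e., hence everywhere. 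In particular $\vu_t(0)=\vu_1$, since $\vu_t^\epsi(0)=\vu_1$ for every $\epsi$. Applying the same compactness lemma to $\{\vu^\epsi\}$ (bounded in $H^1(0,T;\bV)$, hence in $C^0([0,T];\bV)$, with $\vu_t^\epsi$ bounded in $L^\infty(0,T;\bH)$) gives, in addition, $\vu^\epsi\to\vu$ strongly in $C^0([0,T];H^{s_0}(\Omega;\RR^3))$ for every $s_0<1$, whence the traces converge strongly, $\vu^\epsi\to\vu$ in $C^0([0,T];L^q(\Gamma_C;\RR^3))$ for every $q<4$; this is used in Step C.

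\emph{Step C (the functionals $\eta_{(t)}$), which I expect to be the main obstacle.} For fixed $t\in(0,T)$, $\{\gamma^\epsi(\vu^\epsi\cdot\bn)\llcorner_{(0,t)}\}$ is bounded in $\mathcal H_t'$ by \eqref{beta:indualt}, so it has weak limit points; the delicate point is that the statement requires the \emph{single} subsequence of Step A to furnish such a limit for \emph{every} $t$ at once, rather than a $t$-dependent sub-subsequence. Instead of extracting further, I would pass to the limit inside \eqref{u:weak}: testing \eqref{u:weak} at time $s$ with $\bphi(s)\in\bV$, integrating over $(0,t)$, and integrating the term $\langle\vu_{tt}^\epsi,\bphi\rangle$ by parts in time (legitimate at the approximate level thanks to \eqref{C1}), one obtains, for every $\bphi\in\mathcal V_t$, an identity that expresses $\duavwt{\gamma^\epsi(\vu^\epsi\cdot\bn)\llcorner_{(0,t)},\bphi\cdot\bn}$ as $\duavwt{\bg,\bphi}+(\vu_1,\bphi(0))$ minus the quantities $(\vu_t^\epsi(t),\bphi(t))$, $-\duavwt{\vu_t^\epsi,\bphi_t}$, $\duavwt{\TE\bep(\vu^\epsi),\bep(\bphi)}$, $\duavwt{\TV\bep(\vu_t^\epsi),\bep(\bphi)}$ and $\itt\igc (z^\epsi)^+\vu^\epsi\cdot\bphi$. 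Each of these converges along the subsequence fixed in Step A: the first two terms are constant, $(\vu_t^\epsi(t),\bphi(t))$ converges by \eqref{strongu2}, the three bilinear terms by the weak convergences \eqref{conv1}--\eqref{conv3}, and the coupling integral because $(z^\epsi)^+\rightharpoonup\zeta$ weakly-$*$ in $L^\infty(0,T;L^2(\Gamma_C))$ while $\vu^\epsi\to\vu$ strongly in $C^0([0,T];L^q(\Gamma_C;\RR^3))$ for $q<4$ and the trace of $\bphi$ lies in $C^0([0,t];L^p(\Gamma_C))$ for all $p<\infty$, so the product converges in the required duality. Hence the limit of $\duavwt{\gamma^\epsi(\vu^\epsi\cdot\bn)\llcorner_{(0,t)},\psi}$ exists for every $\psi\in\mathcal H_t$ — recall that $\{\bphi\cdot\bn\colon\bphi\in\mathcal V_t\}=\mathcal H_t$ by Lemma \ref{lemma0} applied on $(0,t)$ — and defines a linear functional on $\mathcal H_t$ bounded by \eqref{beta:indualt}, i.e.\ an element $\eta_{(t)}\in\mathcal H_t'$; along the fixed subsequence one then has $\gamma^\epsi(\vu^\epsi\cdot\bn)\llcorner_{(0,t)}\rightharpoonup\eta_{(t)}$ weakly in $\mathcal H_t'$, which is \eqref{conv:betadual:t}, and the choice $t=T$ reproduces \eqref{conv:betadual}. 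The crux, as anticipated, is exactly this uniformity in $t$: it works only because \emph{enough} of the remaining quantities — chiefly $\vu_t^\epsi(t)$ pointwise in time and the boundary traces of $\vu^\epsi$ — converge strongly, which forces the $\gamma^\epsi$-terms to converge as differences, leaving no freedom for subsequence dependence.
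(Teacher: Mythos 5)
Your proposal follows essentially the same path as the paper: weak/weak-$*$ compactness from the a priori estimates of Lemmas~\ref{lemma1}--\ref{lemma2}, Aubin--Lions for the strong $L^2(0,T;\bH)$-convergence of $\vu_t^\eps$, and the observation that $\eta_{(t)}$ can be defined \emph{without} any $t$-dependent sub-subsequence extraction because all the remaining terms in the truncated equation converge along the one subsequence already fixed. Two remarks, one on an alternative argument and one on a small but genuine slip.

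For \eqref{strongu2} the paper uses a shorter route: the evaluation $f\mapsto f(t)$ is a continuous linear map from $H^1(0,T;\bV_0')$ to $\bV_0'$, so \eqref{conv3} already yields $\vu_t^\eps(t)\rightharpoonup\vu_t(t)$ weakly in $\bV_0'$ for \emph{every} $t$; combined with the $\epsi$-uniform bound $\|\vu_t^\eps(t)\|_{\bH}\le M$ valid for every $t$ (a consequence of \eqref{est1} and the regularity \eqref{C1}), this upgrades to weak convergence in $\bH$. Your Ascoli-type argument is also correct, just longer.

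There is, however, a real gap in Step~C as written. You claim that the trace of a test function $\bphi\in\calV_t=H^1(0,t;\bV)$ lies in $C^0([0,t];L^p(\Gamma_C;\RR^3))$ for \emph{all} $p<\infty$. This is false: $\bphi|_{\Gamma_C}\in H^1(0,t;H^{1/2}(\Gamma_C;\RR^3))\hookrightarrow C^0([0,t];H^{1/2}(\Gamma_C;\RR^3))$, and since $\Gamma_C$ is two-dimensional, $H^{1/2}(\Gamma_C)\hookrightarrow L^4(\Gamma_C)$ and no better. With $\bphi\in C^0([0,t];L^4(\Gamma_C))$, $(z^\eps)^+$ bounded only in $L^\infty(0,T;L^2(\Gamma_C))$, and $\vu^\eps\to\vu$ strongly only in $C^0([0,T];L^q(\Gamma_C;\RR^3))$ for $q<4$, the three-factor H\"older estimate for $\itt\igc (z^\eps)^+(\vu^\eps-\vu)\cdot\bphi$ does not close (one needs $\tfrac12+\tfrac1q+\tfrac14\le 1$, i.e.\ $q\ge4$). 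The paper avoids this by first proving the \emph{weak product} convergence $(z^\eps)^+\vu^\eps\rightharpoonup v\,\vu$ in $L^{4/3}(0,T;L^{4/3}(\Gamma_C;\RR^3))$: the product is bounded in $L^\infty(0,T;L^{4/3}(\Gamma_C;\RR^3))$, and the limit is identified against smooth test functions using $(z^\eps)^+\rightharpoonup v$ weakly in $L^2$ together with the strong $L^2(0,T;L^2(\Gamma_C;\RR^3))$-convergence of the traces; one then pairs the product with $\bphi|_{\Gamma_C}\in L^4(0,t;L^4(\Gamma_C;\RR^3))$. Your argument should be repaired along these lines; the rest is sound and matches the paper's proof.
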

\begin{proof}
Convergences \eqref{conv1}, \eqref{conv3}, \eqref{conv:betadual} follow from estimates \eqref{est:uz}. 
Convergences \eqref{conv:xi1} and \eqref{conv:xi2} follow from \eqref{est:xi}.
Moreover, \eqref{est:zt2} implies \eqref{convz}.
Let us now show \eqref{strongu} and \eqref{strongu2}. Thanks to \eqref{conv1} and \eqref{conv3}, 
we can apply the generalized form of Aubin-Lions lemma (\cite[Corollary 4]{Si}, 
\cite[Corollary 7.9]{Roubook}) with the triple of spaces $\bV\subset\subset \bH\subset \bV_0'$. 
This provides \eqref{strongu}. To see \eqref{strongu2}, we first observe that such weak limit 
holds true in the space $\bV_0'$ by \eqref{conv3}. Then the claim follows thanks to the 
fact that $\|\vu_t(t)\|_{\bH}\leq M$ for {\sl every} $t\in[0,T]$ by \eqref{conv1}
and \eqref{C1}.

\medskip

It remains to show \eqref{conv:betadual:t}. Let us set $v^\eps:=(z^\eps)^+$.
Then, thanks to \eqref{convz}, there exists a nonnegative function $v$ such that
\begin{align}\label{convv}
  v^\eps\rightharpoonup v \quext{weakly in }\, L^2(0,T;L^2(\Gamma_C)).
\end{align}
Moreover, by \eqref{conv1}-\eqref{conv3}, the Aubin-Lions lemma, 
and \eqref{cont:trace}, we infer
\begin{equation}\label{convu11}
  \vu^\eps \to \vu \quext{strongly in }\, L^r(0,T;L^r(\Gamma_C;\R^3)),
   ~~\text{for all }\,r\in [1,4).
\end{equation}
Combining \eqref{conv1}, \eqref{convv} and \eqref{convu11}, we obtain
\begin{equation}\label{convu12}
  v\ee \vu^\eps \to v \vu \quext{weakly in }\, L^{4/3}(0,T;L^{4/3}(\Gamma_C;\R^3)).
\end{equation}
Now, for all $t\in(0,T)$, let $\psi\in \calH_t$, let $\bpsi:=\psi\bn$
and let $\bphi$ be the extension of $\bpsi$ provided by Lemma~\ref{lemma0}.
Then let us define a functional $\eta_{(t)}\in \calH_t'$ 
as follows:
\begin{align}\nonumber
   & \mbox{}  \duavw{ \eta_{(t)},\psi }_t
    := \int_0^t (  \vu_t, \bphi_t )
    - ( \vu_t(t) , \bphi(t) )
    + ( \vu_1 , \bphi(0) )
    - \int_0^t \int_\Om \TE \bep(\vu):\bep(\bphi)  \\
  \label{def:eta}  
   & \mbox{}~~~~~ 
   - \int_0^t \int_\Om \TV \bep(\vu_t):\bep(\bphi)
   - \int_0^T \igc v\vu \cdot \bphi
    + \int_0^t\langle g, \bphi \rangle.
\end{align}
Now, thanks to \eqref{conv1} and \eqref{convv}, it is seen that the value of $
\duavw{ \eta_{(t)},\psi }_t$ is exactly the limit of
\begin{align}\nonumber
  & \itt \igc \gamma\ee (\vu\ee \cdot \bn) \psi
   = \int_0^t (  \vu_t\ee, \bphi_t )
    - ( \vu_t\ee(t) , \bphi(t) )
    + ( \vu\ee , \bphi(0) )
    - \int_0^t \int_\Om \TE \bep(\vu^\eps):\bep(\bphi)  \\
 \label{convu13}   
   & \mbox{}~~~~~ 
   - \int_0^t \int_\Om \TV \bep(\vu_t^\eps):\bep(\bphi)
   - \int_0^T \igc v^\eps\vu^\eps \cdot \bphi
    + \int_0^t\langle g, \bphi \rangle,
\end{align}
which is obtained integrating \eqref{u:weak} in time, rearranging terms,
and performing some integration by parts. Hence, in particular
$\eta_{(t)}$ is independent of the extension map $\psi\mapsto\bphi$.
We also observe that,
thanks to \eqref{conv1}-\eqref{conv3} and \eqref{convu12},
the \rhs\ of \eqref{convu13} converges (to the \rhs\ of \eqref{def:eta})
with no need of extracting further subsequences.
The thesis follows.
%
%
\end{proof}
\begin{lemm}[Step 4: refined convergence for $\vu$]\label{lemma4}
 There hold the additional strong convergences:
 \begin{align}\label{uforte1}
   & \vu^\eps \rightarrow \vu \quext{strongly in }\,L^2(0,T;\bV),\\
  \label{uforte2}
   & \vu^\eps(t) \rightarrow \vu(t) \quext{strongly in }\,\bV~~\text{for all }\,t\in[0,T].
 \end{align}
 Moreover, the functions $\eta$ and $\eta_{(t)}$ are identified as follows:
 \begin{align}\label{eq:eta}
   & \eta\in\gamma_w(\vu\cdot \bn),\\
  \label{eq:etat}
   & \eta_{(t)}\in\gamma_{w}((\vu\cdot \bn)\llcorner_{(0,t)}).
 \end{align}
\end{lemm}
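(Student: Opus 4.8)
\emph{Step 1: energy identity and comparison with the limit equation.} The plan is to obtain the two strong convergences and the two graph inclusions simultaneously, through an energy/Minty argument combined with the lower‑semicontinuity properties of $\gamma_w$ from \cite[Sec.~2]{BRSS}. First I would test \eqref{u:weak}, written for $\vu\ee$, by $\bphi=\vu\ee$ and integrate over $(0,t)$. Using \eqref{bcep}, the symmetry of $\TE,\TV$ and one integration by parts in time, one solves for the nonnegative quantity
\[
Q\ee(t):=\int_0^t\io\TE\bep(\vu\ee):\bep(\vu\ee)+\tfrac12\io\TV\bep(\vu\ee(t)):\bep(\vu\ee(t)),
\]
obtaining an identity whose right‑hand side is the sum of the fixed quantity $(\vu_1,\vu_0)+\tfrac12\io\TV\bep(\vu_0):\bep(\vu_0)$, of $\int_0^t\duav{\bg,\vu\ee}$, of $-(\vu\eet(t),\vu\ee(t))+\int_0^t\|\vu\eet\|^2$, of $-\int_0^t\igc(z\ee)^+|\vu\ee|^2$, and of the boundary term $-\int_0^t\igc\gamma\ee(\vu\ee\cdot\bn)(\vu\ee\cdot\bn)$. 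By \eqref{conv1}, \eqref{strongu2} and the uniform convergence $\vu\ee\to\vu$ in $C^0([0,T];\bH)$ (Ascoli, using \eqref{est1}--\eqref{est2}) the first three groups of terms pass to the limit, and by \eqref{convv}, \eqref{convu11}--\eqref{convu12} so does the one involving $(z\ee)^+$. Taking $\limsup_{\eps\to0}$ and comparing with the identity produced by testing the limit equation \eqref{eq1wt} with $\bphi=\vu$ (an admissible test function, since $\vu\in\mathcal V_t$ with $\vu(0)=\vu_0$), one is led to
\[
\limsup_{\eps\to0}Q\ee(t)\ \le\ Q(t)+\duavw{\eta_{(t)},(\vu\cdot\bn)\llcorner_{(0,t)}}_t-\liminf_{\eps\to0}\int_0^t\igc\gamma\ee(\vu\ee\cdot\bn)(\vu\ee\cdot\bn),
\]
where $Q(t)$ denotes the analogue of $Q\ee(t)$ with $\vu$ in place of $\vu\ee$.

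\emph{Step 2: the key boundary estimate and strong convergence.} The crucial point is then to prove
\[
\liminf_{\eps\to0}\int_0^t\igc\gamma\ee(\vu\ee\cdot\bn)(\vu\ee\cdot\bn)\ \ge\ \duavw{\eta_{(t)},(\vu\cdot\bn)\llcorner_{(0,t)}}_t .
\]
I would deduce it from the pointwise Fenchel identity $\gamma\ee(r)\,r=\gammaciapo\ee(r)+\gammaciapo\ee^{*}(\gamma\ee(r))$ (with $\gammaciapo\ee$ the Moreau regularization of $\gammaciapo$), together with the monotonicities $\gammaciapo\ee\ge\gammaciapo_{\eps_0}$ and $\gammaciapo\ee^{*}\ge\gammaciapo^{*}$ for $\eps\le\eps_0$: writing $G_t(\cdot):=\int_0^t\igc\gammaciapo(\cdot)$, the first integrand yields in the limit at least $G_t(\vu\cdot\bn)$, by weak lower semicontinuity of the fixed convex functional $v\mapsto\int_0^t\igc\gammaciapo_{\eps_0}(v)$ along $\vu\ee\cdot\bn\debole\vu\cdot\bn$ followed by monotone convergence in $\eps_0$; the second integrand yields at least $(G_t|_{\mathcal H_t})^{*}(\eta_{(t)})$, by the weak$^{*}$ lower semicontinuity in $\mathcal H_t'$ of the conjugate functional along $\gamma\ee(\vu\ee\cdot\bn)\debole\eta_{(t)}$, which is precisely the relaxation/lower‑semicontinuity result established in \cite[Sec.~2]{BRSS}. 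The Fenchel--Young inequality $G_t(\vu\cdot\bn)+(G_t|_{\mathcal H_t})^{*}(\eta_{(t)})\ge\duavw{\eta_{(t)},(\vu\cdot\bn)\llcorner_{(0,t)}}_t$ then gives the claim. Plugging it into Step 1 produces $\limsup_{\eps\to0}Q\ee(t)\le Q(t)$; since $Q\ee(t)$ is also weakly lower semicontinuous by \eqref{conv1}, each of its two quadratic pieces must converge to the corresponding piece of $Q(t)$, and by the coercivity \eqref{elas}--\eqref{korn} (Korn's inequality) this upgrades the weak convergences of \eqref{conv1} to $\vu\ee\to\vu$ strongly in $L^2(0,t;\bV)$ and $\vu\ee(t)\to\vu(t)$ strongly in $\bV$, i.e.\ \eqref{uforte1}--\eqref{uforte2} (taking $t=T$ and then varying $t$).

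\emph{Step 3: identification of $\eta$ and $\eta_{(t)}$.} Armed with the strong convergence --- in particular $\vu\ee\cdot\bn\to\vu\cdot\bn$ strongly in $L^2(0,T;L^2(\Gamma_C))$ --- I would pass to the limit in the approximate subdifferential inequality: since $\gamma\ee(\vu\ee\cdot\bn)\in\partial G\ee(\vu\ee\cdot\bn)$ in $L^2(0,T;L^2(\Gamma_C))$, for every $w\in\mathcal H$ one has $\duavw{\gamma\ee(\vu\ee\cdot\bn),w-\vu\ee\cdot\bn}+\int_0^T\igc\gammaciapo\ee(\vu\ee\cdot\bn)\le\int_0^T\igc\gammaciapo\ee(w)\le G(w)$. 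Letting $\eps\to0$: the pairing with $w$ tends to $\duavw{\eta,w}$ by \eqref{conv:betadual} (as $w\in\mathcal H$); the term $-\duavw{\gamma\ee(\vu\ee\cdot\bn),\vu\ee\cdot\bn}$ is handled by the $\liminf$ estimate of Step 2 taken over all of $(0,T)$; the integral $\int_0^T\igc\gammaciapo\ee(\vu\ee\cdot\bn)$ is weakly lower semicontinuous; and $\int_0^T\igc\gammaciapo\ee(w)\le G(w)$. One thus recovers $\duavw{\eta,w-\vu\cdot\bn}+G(\vu\cdot\bn)\le G(w)$ for all $w\in\mathcal H$, which is exactly \eqref{gammaw}, i.e.\ \eqref{eq:eta}. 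Running the identical argument on the subinterval $(0,t)$, and using the compatibility relation \eqref{compaeta} between $\eta$ and $\eta_{(t)}$ together with the restriction $(\vu\cdot\bn)\llcorner_{(0,t)}$, yields \eqref{eq:etat}.

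\emph{Main difficulty.} Everything hinges on the boundary term $\int\gamma\ee(\vu\ee\cdot\bn)(\vu\ee\cdot\bn)$ in Step 2. Because of the inertial term $\vu_{tt}$ the trace $\vu\ee\cdot\bn$ converges only weakly in $\mathcal H$, while $\gamma\ee(\vu\ee\cdot\bn)$ is uniformly bounded only in $L^1$ on the boundary (hence only in $\mathcal H'$, not in $L^2$); so no weak--strong pairing in a single function space is available, and one is forced to exploit the convex‑analytic characterization of the relaxed operator $\gamma_w$ from \cite[Sec.~2]{BRSS}. All the remaining manipulations are routine reworkings of the a priori estimates already displayed.
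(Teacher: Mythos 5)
Your proposal takes a genuinely different route from the paper. The paper works with the relaxed maximal monotone operator $\gamma_w^\eps$ and invokes Attouch's theorems on Mosco/graph convergence of subdifferentials (\cite[Thms.~3.20, 3.66]{At}); after obtaining the key $\limsup$ inequality \eqref{semi17} for the boundary pairing from the energy balance, it identifies $\eta\in\gamma_w(\vu\cdot\bn)$ first via Minty's trick and only then upgrades the $\limsup$ to a $\lim$ (through a recovery sequence and monotonicity) to get strong convergence. You instead work at the level of Fenchel duality for the Moreau--Yosida regularizations, establishing the $\liminf$ bound for the boundary pairing first, then deducing strong convergence, then the inclusion $\eta\in\gamma_w(\vu\cdot\bn)$. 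Both perspectives belong to the same circle of ideas, and your Fenchel route is indeed a viable alternative; but there are two points that need fixing.

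First, in Step~1 you claim that the term $-\int_0^t\igc(z\ee)^+|\vu\ee|^2$ ``passes to the limit'' from \eqref{convv}, \eqref{convu11}--\eqref{convu12}. At this stage you only have $(z\ee)^+$ weakly convergent in $L^\infty(0,T;L^2(\Gamma_C))$ and $\vu\ee\to\vu$ strongly in $L^r(L^r(\Gamma_C))$ for $r<4$; no weak--strong pairing closes because you are missing exactly the endpoint $r=4$ (which is what \eqref{uforte1}--\eqref{uforte2} will give you). You only need a one-sided bound, since the term enters with a minus sign: the nonnegativity and joint convexity/continuity of $(v,\vu)\mapsto(v)^+|\vu|^2$ give $\liminf\int(z\ee)^+|\vu\ee|^2\ge\int v|\vu|^2$ via Ioffe's semicontinuity theorem (which is exactly what the paper does in \eqref{semi15}--\eqref{semi16}). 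Replace ``passes to the limit'' by this one-sided estimate and the energy comparison still closes.

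Second, Step~3 as written uses the wrong direction of the boundary estimate. To pass to the limit in the subdifferential inequality $\duavw{\gamma\ee(\vu\ee\cdot\bn),w-\vu\ee\cdot\bn}+\int\gammaciapo\ee(\vu\ee\cdot\bn)\le G(w)$ and arrive at $\duavw{\eta,w-\vu\cdot\bn}+G(\vu\cdot\bn)\le G(w)$, you must bound the left-hand side \emph{from below}, and for the term $-\duavw{\gamma\ee(\vu\ee\cdot\bn),\vu\ee\cdot\bn}$ this requires $\limsup_\eps\duavw{\gamma\ee(\vu\ee\cdot\bn),\vu\ee\cdot\bn}\le\duavw{\eta,\vu\cdot\bn}$, whereas Step~2 gives you only the reverse $\liminf\ge$. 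The missing $\limsup$ bound is available: it comes out of Step~1's energy comparison combined with the $\liminf$ lower semicontinuity bound for $Q\ee$, and once strong convergence has been established the full limit $\lim_\eps\duavw{\gamma\ee(\vu\ee\cdot\bn),\vu\ee\cdot\bn}=\duavw{\eta,\vu\cdot\bn}$ holds. You should make this explicit rather than citing the Step~2 $\liminf$ estimate, which alone points in the wrong direction.
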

\begin{proof}
Let us define, analogously with \eqref{defiG}, the convex functional
\begin{equation}\label{JJee}
  G\ee:L^2(0,T;L^2(\Gamma_C)) \to [0,+\infty), 
   \quad G\ee(v):= \iTT\igc \gammaciapo\ee(v).
\end{equation}
%
%
Then, as observed before, the subdifferential
$\de G\ee$ in $L^2(0,T;L^2(\Gamma_C))$ factually coincides
with the graph $\gamma\ee$.
%
%
%
%
On the other hand, we may interpret
the function $\gamma^\eps$ also as a monotone operator from 
$\mathcal H$ into $\mathcal H'$. Indeed, if $v\in \mathcal H$, then it 
results $\gamma^\eps(v)\in L^2(0,T;L^2(\Gamma_C))\subset \mathcal H'$ 
thanks to the Lipschitz continuity of $\gamma^\eps$. Moreover, 
for all $u,v\in \mathcal H$
we have
\begin{equation}\label{identi12}
  \duavw{ \gamma^\eps(u)-\gamma^\eps(v),u-v }
   =\duas{\gamma^\eps(u)-\gamma^\eps(v),u-v}\geq 0.
\end{equation}
Also, for $u\in \mathcal H$, $\gamma^\eps(u)$ belongs
to the subdifferential $\gamma_w\ee$
of (the restriction to $\calH$ of) $G^\eps$ 
at the point $u$ computed with respect to the duality 
pairing between $\calH'$ and $\calH$. Indeed, we have 
\begin{equation}\label{identi13}
  \duavw{ \gamma^\eps(u),v-u }
   = \iTT \igc \gamma^\eps(u) (v-u) 
   \leq \iTT\igc \big( \gammaciapo^\eps(v)-\gammaciapo^\eps(u) \big)
    = G^\eps(v) - G^\eps(u),
\end{equation}
for all $v\in \mathcal H$. In short terms, we have $\gamma\ee\subset \gamma_w\ee$,
where $\gamma_w^\eps$ acts as a maximal monotone operator 
between $\mathcal H$ and $\mathcal H'$. 

Now, thanks to the monotonicity with respect to $\epsi$ of 
the functionals $G\ee$, owing to \cite[Theorem 3.20]{At} and \cite[Theorem 3.66]{At}, 
the maximal monotone operators $\gamma^\eps_w$ 
converge to  $\gamma_w$ in the graph sense:
\begin{align}\label{graphconv}
  \forall\, [x;y]\in \gamma_w,~~\exists\, [x^\eps;y^\eps]\in \gamma^\eps_w
   \quext{such that }\,[x^\eps;y^\eps]\rightarrow [x;y],
\end{align}
where the convergence is intended with respect to the strong topology of $\mathcal H\times \mathcal H'$.

In order to take the limit of the equation for $\vu$, we first observe that \eqref{u:weak},
after integration in time over $(0,T)$, can be equivalently rewritten in the form
\begin{align}\nonumber
  & \duas{\TE \bep(\vu^\eps) , \bep(\bphi)}
   + \duas{\TV\bep(\vu^\eps_t) , \bep(\bphi)}
   + \duas{\gamma^\eps(\vu^\eps\cdot\bn),\bphi \cdot \bn } \\
 \label{eq1w:eps}  
  & \mbox{}~~~~~
  = \duas{\vu\eet,\bphi_t}
   - ( \vu\eet(T), \bphi(T) )
   + ( \vu_1, \bphi(0) )
   - \iTT\igc v\ee \vu^\eps \cdot \bphi
   + \duavw{ \bg, \bphi},
\end{align}
for any function $\bphi\in \calV$, where $v\ee=(z^\eps)^+$.
Let us also notice that the counterpart of \eqref{eq1w:eps} on subintervals 
$(0,t)$ could be stated analogously.
We now aim to let $\epsi\searrow 0$. To start with, 
we take $\bphi=\vu\ee$ in \eqref{eq1w:eps}. Then, integrating
by parts and rearranging terms, we get
\begin{align}\nonumber
  & \duas{\gamma^\eps(\vu^\eps\cdot\bn),\vu\ee \cdot \bn } =
   - \duas{\TE \bep(\vu^\eps) , \bep(\vu\ee)}
   - \frac12 \io \TV \bep(\vu^\eps(T)) : \bep(\vu^\eps(T))
   + \frac12 \io \TV \bep(\vu_0) : \bep(\vu_0)\\
 \label{semi11}  
  & \mbox{}~~~~~
   + \duas{\vu^\eps_{t},\vu\eet}
   - ( \vu\eet(T), \vu\ee(T) )
   + ( \vu_1, \vu_0 )
   - \iTT\igc v\ee |\vu\ee|^2
   + \duavw{ \bg, \vu\ee}.
\end{align}
%
%
%
%
%
Now, recalling \eqref{convu11}-\eqref{convu12}, we may
take the limit $\epsi\searrow 0$ in relation~\eqref{eq1w:eps}. Actually,
using also \eqref{conv1} and \eqref{conv:betadual}, we easily arrive at
\begin{align}\nonumber
  & \duas{\TE \bep(\vu) , \bep(\bphi)}
   + \duas{\TV\bep(\vu_t) , \bep(\bphi)}
   + \duavw{\eta,\bphi \cdot \bn } \\
 \label{eq1w:0}  
  & \mbox{}~~~~~
  = \duas{\vu_t,\bphi_t}
   - ( \vu_t(T), \bphi(T) )
   + ( \vu_1, \bphi(0) )
   - \iTT\igc v \vu \cdot \bphi
   + \duavw{ \bg, \bphi}.
\end{align}
In particular, in view of the fact that $\vu\in \calV$, we may
take $\bphi=\vu$. Proceeding as for \eqref{semi11}, we then have
\begin{align}\nonumber
  & \duavw{\eta,\vu \cdot \bn } 
   = - \duas{\TE \bep(\vu) , \bep(\vu)}
   - \frac12 \io \TV \bep(\vu(T)) : \bep(\vu(T))
   + \frac12 \io \TV \bep(\vu_0) : \bep(\vu_0)\\
 \label{semi14}  
  & \mbox{}~~~~~
   + \duas{\vu_{t},\vu_t}
   - ( \vu_t(T), \vu(T) )
   + ( \vu_1, \vu_0 )
   - \iTT\igc v |\vu|^2
   + \duavw{ \bg, \vu}.
\end{align}
Now, we use the so-called Minty's semicontinuity trick 
in order to identify the function $\eta$. To this aim, we
take the $\limsup$ as $\epsi\searrow 0$ in \eqref{semi11}
and we compare the outcome to \eqref{semi14}. 
Here, the key point stands in dealing with the integral term.
Actually, 
%
%
%
setting
\begin{equation}\label{semi16}
  \Phi:\RR \times \RR^3 \to \RR, \quad
   \Phi(v,\vu):= (v)^+ | \vu |^2,
\end{equation}
we may observe that $\Phi$ is continuous and nonnegative.
Moreover, it is convex in $\vu$ for any
$v\in\RR$. Hence, Ioffe's semicontinuity theorem (cf., e.g., \cite{Ioffe}) yields
\begin{equation}\label{semi15}
  \iTT\igc v |\vu|^2 
   = \iTT\igc \Phi (v,\vu)
   \le \liminf_{\epsi\searrow 0} \iTT\igc \Phi (v\ee,\vu\ee)
    = \liminf_{\epsi\searrow 0} \iTT\igc v\ee |\vu\ee|^2.
\end{equation}
Indeed, we already know that $(v)^+ = v$ and $(v\ee)^+ = v\ee$. 

With \eqref{semi15} at disposal, taking the $\limsup$ of \eqref{semi11},
using \eqref{conv1}, \eqref{strongu}-\eqref{strongu2}, 
\eqref{convu11} and semicontinuity
of norms with respect to weak convergence, and finally comparing with 
\eqref{semi14}, we get
\begin{equation}\label{semi17}
  \limsup_{\epsi\searrow 0} \,\duavw{\gamma^\eps(\vu^\eps\cdot\bn),\vu\ee \cdot \bn } 
    \le \duavw{\eta,\vu \cdot \bn }.
\end{equation}
Thanks to the fact that $\gamma^\eps(\vu^\eps\cdot\bn)\in \gamma\ee_w(\vu^\eps\cdot\bn)$
and to the graph convergence \eqref{graphconv}, this suffices to 
prove~\eqref{eq:eta}, i.e., to identify $\eta$ (cf.~\cite[Prop.~3.59]{At},
see also \cite[Prop.~2.5]{Br}). 

\smallskip

At this point, we are able to reinforce the strong 
convergence of $\vu\ee$ in \eqref{convu11}.
To this aim, let us first notice that, in view of \eqref{graphconv},
there exists a sequence $[x\ee;y\ee]\in \gamma_w\ee$ such that
$[x\ee;y\ee]\to[\vu\cdot\vn;\eta]$ strongly in $\calH \times \calH'$.
Then, by monotonicity of $\gamma\ee_w$, we have
\begin{equation}\label{semi21}
   \duavw{\gamma^\eps(\vu^\eps\cdot\bn),\vu\ee \cdot \bn }
    \ge \duavw{\gamma^\eps(\vu^\eps\cdot\bn),x\ee}
     + \duavw{y\ee,\vu\ee \cdot \bn }
    - \duavw{y\ee,x\ee}.
\end{equation}
Then, taking the $\liminf$, noting that all terms on the \rhs\ pass
to the limit, and using \eqref{semi17}, we may conclude that
\begin{equation}\label{semi17b}
  \lim_{\epsi\searrow 0} \duas{\gamma^\eps(\vu^\eps\cdot\bn),\vu\ee \cdot \bn } 
    = \duavw{\eta,\vu \cdot \bn }.
\end{equation}
At this point, we rearrange terms in \eqref{semi11} rewriting it in the 
following way:
\begin{align}\nonumber
  & \duas{\TE \bep(\vu^\eps) , \bep(\vu\ee)}
   + \frac12 \io \TV \bep(\vu^\eps(T)) : \bep(\vu^\eps(T))
   = - \duas{\gamma^\eps(\vu^\eps\cdot\bn),\vu\ee \cdot \bn } 
   + \frac12 \io \TV \bep(\vu_0) : \bep(\vu_0)\\
 \label{semi11b}  
  & \mbox{}~~~~~
   + \duas{\vu^\eps_{t},\vu\eet}
   - ( \vu\eet(T), \vu\ee(T) )
   + ( \vu_1, \vu_0 )
   - \iTT\igc v\ee |\vu\ee|^2
   + \duavw{ \bg, \vu\ee}.
\end{align}
We also rearrange terms in \eqref{semi14} in a similar way. Then,
taking the $\limsup$ of \eqref{semi11b}, using \eqref{semi17b},
and comparing with (the rearranged) \eqref{semi14}, we obtain
\begin{equation}\label{semi17c}
  \limsup_{\epsi\searrow 0} \bigg( \duas{\TE \bep(\vu^\eps) , \bep(\vu\ee)}
   + \frac12 \io \TV \bep(\vu^\eps(T)) : \bep(\vu^\eps(T)) \bigg)
  \le \duas{\TE \bep(\vu) , \bep(\vu)}
   + \frac12 \io \TV \bep(\vu(T)) : \bep(\vu(T)).
\end{equation}
Using assumption~(a), we then get~\eqref{uforte1}. Then,
repeating the argument on subintervals $(0,t)$, we also obtain
\eqref{uforte2}. As a further consequence, 
combining \eqref{uforte2} with the uniform boundedness
\eqref{est2}, we get
\begin{equation}\label{uforte3}
  \vu\ee \to \vu
   \quext{strongly in }\,L^4(0,T;\bV),
   ~~\text{whence in }\,L^4(0,T;L^4(\Gamma_C;\RR^3)),
\end{equation}
the latter property following from \eqref{cont:trace}.
Relation \eqref{uforte3} will play a key role when we take
the limit of the equation for $z$.

To conclude the proof of the lemma, we need to show~\eqref{incwt}.
This, however, follows simply by repeating the above argument
on the subintervals $(0,t)$, for $t\le T$.
\end{proof}
\begin{lemm}[Step 5: refined convergence for $z$]\label{lemma5}
 There hold the additional strong convergences
 \begin{align}\label{convz1}
   & z^\eps\rightarrow z \quext{strongly in }\,L^2(0,T;L^2(\Gamma_C)),\\
  \label{convz2}
   & z^\eps(t)\rightarrow z(t)\quext{strongly in }L^2(\Gamma_C),~~\text{ for all }\,t\in[0,T].
 \end{align}
\end{lemm}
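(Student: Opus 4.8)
The plan is to recover the strong convergence of $z\ee$ directly from the variational structure of the equation for $z$, since $L^2(\Gamma_C)$ carries no spatial compactness and the only compactness available for $\{z\ee\}$ (from \eqref{est3}) is in time; thus the passage from weak to strong convergence must come from an energy-type identity. The decisive input is the strong convergence $\vu\ee\to\vu$ in $L^4(0,T;L^4(\Gamma_C;\RR^3))$ already established in Lemma~\ref{lemma4} (see \eqref{uforte3}), which gives $\tfrac12|\vu\ee|^2\to\tfrac12|\vu|^2$ strongly in $L^2(0,T;L^2(\Gamma_C))$. The object to work with is the approximate energy balance for the $z$--subsystem: testing \eqref{damep} by $z\eet$ and integrating over $\Gamma_C\times(0,t)$ gives, for every $t\in[0,T]$,
\[ \itt\igc\alpha\ee(z\eet)\,z\eet + \itt\igc|z\eet|^2 + \igc\betaciapo\ee(z\ee(t)) = \igc\betaciapo\ee(z_0) + \itt\igc\Big(a-\tfrac12|\vu\ee|^2\Big)z\eet. \]

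First I would record the elementary consequences of Lemma~\ref{lemma2}. By \eqref{yosida}, $\alpha\ee(r)=\eps^{-1}(r)^+$, hence the pointwise identity $\alpha\ee(r)\,r=\eps\,|\alpha\ee(r)|^2$; combined with \eqref{est:alpha:zt} this yields $\itt\igc\alpha\ee(z\eet)z\eet=\eps\itt\igc|\alpha\ee(z\eet)|^2\le\eps\,T\,M^2\to0$, and also $\|(z\eet)^+\|_{L^2(\Gamma_C)}=\eps\|\alpha\ee(z\eet)\|_{L^2(\Gamma_C)}\le\eps M\to0$, so that $z_t\le0$ a.e.\ in the limit. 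Analogously, since the resolvent of $\beta$ maps into $\overline{D(\beta)}=[0,+\infty)$ one has $|\beta\ee(r)|\ge\eps^{-1}(r)^-$, hence by \eqref{est:beta:z} $\|(z\ee)^-\|_{L^2(\Gamma_C)}\le\eps M\to0$, so $z\ge0$ a.e., $(z\ee)^+\rightharpoonup z$, and $\eps\itt\igc|\beta\ee(z\ee)|^2\to0$. Finally, using \eqref{hp:z02} and the standard Moreau--Yosida bound $\betaciapo\ee\ge\betaciapo\circ(\Id+\eps\beta)^{-1}$ together with the strong convergence of $(\Id+\eps\beta)^{-1}$ to the identity on $[0,+\infty)$ (which contains the range of $z_0$), one gets $\igc\betaciapo\ee(z_0)\to\igc\betaciapo(z_0)$.

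Next I would pass to the limit in the balance above: the right--hand side converges to $\itt\igc(a-\tfrac12|\vu|^2)z_t$ (strongly convergent first factor by \eqref{uforte3}, weakly convergent $z\eet$ by \eqref{convz}), the first term on the left goes to zero, and by weak lower semicontinuity $\liminf_\eps\itt\igc|z\eet|^2\ge\itt\igc|z_t|^2$ and $\liminf_\eps\igc\betaciapo\ee(z\ee(t))\ge\igc\betaciapo(z(t))$ (for the latter, use $\betaciapo\ee(z\ee(t))\ge\betaciapo((\Id+\eps\beta)^{-1}z\ee(t))$, the fact that $(\Id+\eps\beta)^{-1}z\ee(t)\rightharpoonup z(t)$ in $L^2(\Gamma_C)$, and weak lower semicontinuity of $v\mapsto\igc\betaciapo(v)$). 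This produces $\limsup_{\eps\searrow0}\itt\igc|z\eet|^2 \le \itt\igc(a-\tfrac12|\vu|^2)z_t + \igc\betaciapo(z_0) - \igc\betaciapo(z(t))$. On the other hand, letting $\eps\searrow0$ in \eqref{damep} (by \eqref{convz}, \eqref{conv:xi1}, \eqref{conv:xi2}, \eqref{uforte3}) yields \eqref{eq:z}, and testing \eqref{eq:z} by $z_t\in L^\infty(0,T;L^2(\Gamma_C))$ gives the limit balance $\itt\igc\xi_2 z_t+\itt\igc|z_t|^2+\itt\igc\xi_1 z_t=\itt\igc(a-\tfrac12|\vu|^2)z_t$. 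Comparing the two, it suffices to prove $\itt\igc\xi_2 z_t + \itt\igc\xi_1 z_t = \igc\betaciapo(z(t)) - \igc\betaciapo(z_0)$: this forces $\limsup_\eps\itt\igc|z\eet|^2\le\itt\igc|z_t|^2$, and together with $z\eet\rightharpoonup z_t$ in the Hilbert space $L^2(0,t;L^2(\Gamma_C))$ it gives $z\eet\to z_t$ strongly there; since $z\ee(0)=z_0=z(0)$, from $z\ee(t)-z(t)=\itt(z\eet-z_t)$ and $\|z\ee(t)-z(t)\|_{L^2(\Gamma_C)}\le t^{1/2}\|z\eet-z_t\|_{L^2(0,t;L^2(\Gamma_C))}$ one gets $z\ee\to z$ strongly in $L^\infty(0,T;L^2(\Gamma_C))$, hence \eqref{convz1} and \eqref{convz2} at once.

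The main obstacle is precisely that last identity, i.e.\ the limit energy balance for the $z$--subsystem. One half is elementary: $\xi_2\ge0$ and $z_t\le0$ give $\itt\igc\xi_2 z_t\le0$, while taking $\liminf$ in the approximate balance (recall $\itt\igc\alpha\ee(z\eet)z\eet\to0$) already yields $\igc\betaciapo(z(t))-\igc\betaciapo(z_0)\le\itt\igc\xi_1 z_t$. The reverse inequality, however, is essentially equivalent to the pointwise identifications $\xi_1\in\beta(z)$ and $\xi_2\in\alpha(z_t)$ (via the chain rule $\frac{d}{dt}\betaciapo(z)=\xi_1 z_t$ and $\xi_2 z_t=0$), which — there being no spatial compactness for $z\ee$ beyond the very conclusion of this lemma — cannot be obtained prior to the strong convergence. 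These two facts therefore have to be proved jointly, by a Minty--type monotonicity argument carried out simultaneously on both nonlinearities, exploiting the graph convergence of the Yosida approximations, the uniform bounds \eqref{est:xi}, and the semicontinuity estimates above; this bootstrap (the ``careful combination of monotonicity and semicontinuity tools'' of the Introduction) is the only delicate point. Once strong convergence of $z\ee$ is in hand, the identifications follow routinely (Minty with $\limsup\iTT\igc\beta\ee(z\ee)z\ee\le\iTT\igc\xi_1 z$ and the analogue for $\alpha\ee$), closing the loop.
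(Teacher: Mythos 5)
Your approach is genuinely different from the paper's and, as you yourself acknowledge, it leaves the decisive step unproved. You set up the approximate energy balance for the $z$--subsystem and aim to deduce $\limsup_\eps\|z\eet\|^2_{L^2L^2}\le\|z_t\|^2_{L^2L^2}$ by comparison with the limit balance, which would indeed give strong convergence. But the comparison requires precisely the chain-rule identity $\iTT\igc\xi_1z_t=\igc\betaciapo(z(T))-\igc\betaciapo(z_0)$ together with $\xi_2z_t=0$, i.e.\ it requires the identifications $\xi_1\in\beta(z)$ and $\xi_2\in\alpha(z_t)$; and those identifications in turn require the strong convergence of $z\ee$, since with only weak convergence of both $z\ee$ and $\beta\ee(z\ee)$ one cannot close Minty's inequality for $\beta$ (the doubly nonlinear structure makes the cross terms $\iTT\igc\alpha\ee(z\eet)z\ee$ and $\iTT\igc\beta\ee(z\ee)z\eet$ inaccessible as weak--weak pairings). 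The ``joint Minty-type argument carried out simultaneously on both nonlinearities'' that you invoke to break this loop is not given, and it is not clear that it exists: one can easily check that testing \eqref{dam:pf} by $z\eet$ or by $z\ee$, taking limsups, and substituting the limit equation always reproduces the missing inequality rather than proving it. This is a genuine gap, not a routine detail.

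The paper avoids the circularity altogether by adapting the Cauchy-sequence argument of Blanchard--Damlamian--Ghidouche. It rewrites \eqref{dam:pf} via the nonexpansive resolvent $A_\eps$ as $z\eet=A_\eps\bigl(a-\tfrac12|\vu\ee|^2-\beta\ee(z\ee)\bigr)$, forms the difference of the $\eps$- and $\delta$-equations, multiplies by $z^\eps-z^\delta$, and splits the resulting integral into three pieces. The ``bad'' piece (the one carrying both $\alpha$- and $\beta$-nonlinearities) is handled by the resolvent identities $z\ee-\eps\beta\ee(z\ee)=R_\eps(z\ee)$ and $\beta\ee(z\ee)\in\beta(R_\eps(z\ee))$, which make its dominant part nonpositive by monotonicity and leave only $O(\eps+\delta)$ remainders. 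A generalized Gronwall lemma then yields $\|z^\eps(t)-z^\delta(t)\|_{L^2(\Gamma_C)}\to0$ uniformly in $t$, using \eqref{uforte3} for the $\vu$-dependent piece. No identification of $\xi_1$ or $\xi_2$ is needed; those identifications come afterward (in Lemma~\ref{lemma6}), \emph{using} the strong convergence just obtained. Your preliminary observations (e.g.\ $\alpha\ee(r)r=\eps|\alpha\ee(r)|^2$, $\|(z\eet)^+\|\le\eps M$, $\|(z\ee)^-\|\le\eps M$) are correct and occasionally useful, but they do not by themselves supply the missing reverse inequality; you should replace the energy-balance strategy with the contraction/Cauchy argument.
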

\begin{proof}
We prove \eqref{convz1} and \eqref{convz2} by adapting an argument due
to Blanchard, Damlamian and Ghidouche \cite[Lemma~3.3]{BDG} (see also
\cite{CoLuSchSte}). 
To start with, applying $A_\eps$ (cf.~\eqref{yosida}),
equation \eqref{dam:pf} may be equivalently rewritten as 
\begin{align}\label{eq3:eps:inv}
  z^\eps_t = A_\eps\Big(a-\frac{1}{2}|\vu^\eps|^2-\beta^\eps(z^\eps)\Big).
\end{align}
For $\delta\in(0,\frac12)$, we subtract from \eqref{eq3:eps:inv} the analogue expression with $\delta$
in place of $\eps$, then we multiply the result by $z^\eps-z^\delta$. After integration on $(0,t)\times\Gamma_C$,
we get
\begin{align}\no
  & \frac{1}{2}\|z^\eps(t)-z^\delta(t)\|_{L^2(\Gamma_C)}^2
   = \int_0^t \igc \Big(A_\eps\big(a-\frac{1}{2}|\vu^\eps|^2-\beta^\eps(z^\eps)\big)
      - A_\eps\big(a-\frac{1}{2}|\vu^\eps|^2-\beta^\delta(z^\delta)\big)\Big)
          (z^\eps-z^\delta)\\
 \no
  & \mbox{}~~~~~
   + \int_0^t \igc \Big(A_\eps\big(a-\frac{1}{2}|\vu^\eps|^2-\beta^\delta(z^\delta)\big)
      - A_\eps\big(a-\frac{1}{2}|\vu^\delta|^2-\beta^\delta(z^\delta)\big)\Big)(z^\eps-z^\delta )\\
 \no
  & \mbox{}~~~~~
  + \int_0^t\igc \Big(A_\eps\big(a-\frac{1}{2}|\vu^\delta|^2-\beta^\delta(z^\delta)\big)
       -A_\delta\big(a-\frac{1}{2}|\vu^\delta|^2-\beta^\delta(z^\delta)\big)\Big)(z^\eps-z^\delta)\\
 \label{z:strong}
  & =: I_1(t)+I_2(t)+I_3(t).
\end{align}
By the nonexpansivity of $A_\eps$, we first compute
\begin{align}\no
  I_2(t) & \leq \int_0^t \igc |\vu^\eps+\vu^\delta||\vu^\eps-\vu^\delta||z^\eps-z^\delta|\\
 \no
  & \leq \|\vu^\eps-\vu^\delta\|_{L^2(0,t;L^4(\Gamma_C;\RR^3))}\|\vu^\eps+\vu^\delta\|_{L^\infty(0,t;L^4(\Gamma_C;\RR^3))}  
         \|z^\eps-z^\delta\|_{L^2(0,t;L^2(\Gamma_C))}\\
 \label{z:strong_1}
  & \leq c \|\vu^\eps-\vu^\delta\|_{L^2(0,t;L^4(\Gamma_C;\RR^3))}^2
   + \|z^\eps-z^\delta\|_{L^2(0,t;L^2(\Gamma_C))}^2.
\end{align}
Next, recalling \eqref{yosida},
\begin{align}\no
  I_3(t) & = \int_0^t\igc \Big( \frac{\eps}{\eps+1} - \frac{\delta}{\delta+1} \Big) \Big( a -\frac{1}{2}|\vu^\delta|^2-\beta^\delta(z^\delta) \Big)^+(z^\eps-z^\delta)\\
 \no
  & \leq c ( \epsi + \delta ) \big( 1 + \| \vu^\delta\|^2_{L^\infty(0,t;L^4(\Gamma_C;\RR^3))} + \| \beta^\delta(z^\delta) \|_{L^\infty(0,t;L^2(\Gamma_C))} \big)
    \|z^\eps-z^\delta\|_{L^1(0,t;L^2(\Gamma_C))}\\
 \label{nuovo11}
  & \leq c ( \epsi + \delta ) \|z^\eps-z^\delta\|_{L^1(0,t;L^2(\Gamma_C))},
\end{align}
the last inequality following from the previous uniform estimates.

In order to estimate $I_1(t)$ we argue as in \cite[p.~270]{CoLuSchSte}. 
Namely, we use the identities
\begin{align}
  & z^\eps - \eps\beta^\eps(z^\eps) = R_\eps(z^\eps),\label{bre1}\\
  & \beta^\eps(z^\eps)\in\beta(R_\eps(z^\eps))\label{bre2},
\end{align}
valid for all $\eps\in(0,1)$, where $R_\eps$ is the {\sl resolvent}\/ of $\beta$ at
step $\eps$ (see \cite[p.~27-28]{Br}). We have
\begin{align}\no
 I_1(t) & = I_{1,1} + I_{1,2} :=  \int_0^t \int_{\Gamma_{C,0}} \frac{A_\eps(a-\frac{1}{2}|\vu^\eps|^2-\beta^\eps(z^\eps))
    - A_\eps(a-\frac{1}{2}|\vu^\eps|^2-\beta^\delta(z^\delta))}{\beta^\eps(z^\eps)-\beta^\delta(z^\delta)} \times\\
  \no 
  & \mbox{}~~~~~~~~~~~~~~~   
    \times (R_\eps(z^\eps)-R_\delta(z^\delta)) (\beta^\eps(z^\eps)- \beta^\delta(z^\delta)) \\
 \label{z:strong_3}
 & \mbox{}
  + \int_0^t \igc \Big(A_\eps\big(a-\frac{1}{2}|\vu^\eps|^2-\beta^\eps(z^\eps)\big)
   - A_\eps\big(a-\frac{1}{2}|\vu^\eps|^2-\beta^\delta(z^\delta)\big)\Big) (\eps\beta^\eps(z^\eps)-\delta\beta^\delta(z^\delta)),
\end{align}
where $\Gamma_{C,0}$ is the set where the denominator does not vanish.
Now, $I_{1,1}$ is easily seen to be nonpositive thanks to 
the monotonicity of $A_\eps$ and to \eqref{bre1}. The second term $I_{1,2}$
is controlled as follows:
\begin{align}\label{I12}
  | I_{1,2} | 
   & \le \| \beta^\eps(z^\eps)- \beta^\delta(z^\delta) \|_{L^2(0,t;L^2(\Gamma_C))}
   \big ( \| \eps \beta^\eps(z^\eps) - \delta \beta^\delta(z^\delta) \|_{L^2(0,t;L^2(\Gamma_C))} \big) 
    \le c ( \eps + \delta ), 
\end{align}
where we used \eqref{est:beta:z} twice. Gathering the estimates \eqref{z:strong_1}-\eqref{I12}, 
and setting $\phi(t):=\|z^\eps(t)-z^\delta(t)\|_{L^2(\Gamma_C)}$ 
we obtain, for every $t\in (0,T]$ and every $s\in (0,t]$,
the differential inequality
\begin{equation}\label{I13a}
  \frac12 \phi^2(s)
   \leq \frac{c_1^2}2 \Big( \eps + \delta + \|\vu^\eps-\vu^\delta\|_{L^2(0,t;L^4(\Gamma_C;\RR^3))}^2 
         + \| z^\eps-z^\delta\|_{L^2(0,t;L^2(\Gamma_C))}^2 \Big)
     + c_2 (\eps + \delta) \int_0^s \phi(\cdot),
\end{equation}
for suitable $c_1$, $c_2$ independent of $\epsi$ and $\delta$.
Then, applying the generalized Gronwall lemma \cite[Lemme A.5]{Br}, we deduce
\begin{equation}\label{I13b}   
  \phi(s) \leq c_1 \Big( \eps + \delta + \|\vu^\eps-\vu^\delta\|_{L^2(0,t;L^4(\Gamma_C;\RR^3))}^2 
         + \| z^\eps-z^\delta\|_{L^2(0,t;L^2(\Gamma_C))}^2 \Big)^{1/2}
     + c_2 (\eps + \delta) s,
\end{equation}
whence, squaring, and choosing $s=t$,
\begin{equation}\label{I13c}   
  \phi^2(t) \leq c_3 \bigg( \eps + \delta + \|\vu^\eps-\vu^\delta\|_{L^2(0,t;L^4(\Gamma_C;\RR^3))}^2 
         + \int_0^t \phi^2(\cdot) \bigg).
\end{equation}
Applying once more Gronwall's lemma, now in its standard form, 
and recalling \eqref{uforte3}, we then arrive at
\begin{equation}\label{I14}
  \| z^\eps(t)-z^\delta(t) \|_{L^2(\Gamma_C)} \rightarrow 0
   \quext{as }|\eps+\delta| \to 0,
\end{equation}
for every $t\in[0,T]$, which implies \eqref{convz1} and \eqref{convz2}
in view of~\eqref{convz}.
\end{proof}
\begin{lemm}[Step 6: limit flow rule]\label{lemma6}
 The limit functions provided by\/ {\rm Lemma~\ref{lemma3}}
 satisfy condition\, {\rm (ii)} of\/ {\rm Def.~\ref{weaksol}}. 
 Moreover the inclusions \eqref{inc1} and \eqref{inc2} hold.
\end{lemm}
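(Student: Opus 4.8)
The plan is to pass to the limit $\epsi\searrow0$ in the approximate flow rule \eqref{damep} and then to identify $\xi_1$ and $\xi_2$ by monotonicity. The convergences needed are those established in Lemmas~\ref{lemma3}--\ref{lemma5}: $\beta\ee(z\ee)\rightharpoonup\xi_1$ and $\alpha\ee(z_t\ee)\rightharpoonup\xi_2$ weakly in $L^2(0,T;L^2(\Gamma_C))$ by \eqref{conv:xi1}--\eqref{conv:xi2}, $z_t\ee\rightharpoonup z_t$ weakly in the same space by \eqref{convz}, $z\ee\to z$ strongly in $L^2(0,T;L^2(\Gamma_C))$ by \eqref{convz1}, and $\bu\ee\to\bu$ strongly in $L^4(0,T;L^4(\Gamma_C;\RR^3))$ by \eqref{uforte3}, so that $|\bu\ee|^2\to|\bu|^2$ strongly in $L^2(0,T;L^2(\Gamma_C))$. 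Since the two sides of \eqref{damep} thus converge (weakly, resp.\ strongly) in $L^2(0,T;L^2(\Gamma_C))$, passing to the limit gives $\xi_2+z_t+\xi_1=a-\frac12|\bu|^2$ in that space; as all the terms belong in fact to $L^\infty(0,T;L^2(\Gamma_C))$, this is precisely \eqref{eq:z}, i.e.\ condition~(ii). Moreover, since $z\ee\to z$ strongly and $\beta\ee(z\ee)\rightharpoonup\xi_1$ weakly, we have $\iTT\igc\beta\ee(z\ee)z\ee\to\iTT\igc\xi_1 z$; combining this with the graph convergence $\beta\ee\to\beta$, the standard monotonicity (Minty) argument for the realization of $\beta$ on $L^2(0,T;L^2(\Gamma_C))$ (cf.~\cite[Prop.~2.5]{Br}, \cite[Prop.~3.59]{At}) yields \eqref{inc1}, namely $\xi_1\in\beta(z)$ a.e.\ on $(0,T)\times\Gamma_C$.

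The delicate point is \eqref{inc2}, the identification $\xi_2\in\alpha(z_t)$: since $z_t\ee$ converges only weakly one cannot pass to the limit directly in $\alpha\ee(z_t\ee)$, and the presence of the second nonlinearity $\beta\ee(z\ee)$ in the same equation obstructs a naive Minty trick. The plan is instead to extract the required $\limsup$ estimate from the equation itself. Testing \eqref{damep} by $z_t\ee$, integrating over $(0,T)\times\Gamma_C$, and using $(\betaciapo\ee)'=\beta\ee$, so that $\igc\beta\ee(z\ee)\,z_t\ee=\ddt\igc\betaciapo\ee(z\ee)$, one obtains
\begin{equation}\label{lemma6:split}
  \iTT\igc\alpha\ee(z_t\ee)\,z_t\ee
   = \iTT\igc\Big(a-\frac12|\bu\ee|^2\Big)z_t\ee
    -\iTT\igc|z_t\ee|^2
    -\igc\betaciapo\ee(z\ee(T))+\igc\betaciapo\ee(z_0),
\end{equation}
where $\betaciapo\ee$ denotes the (nonnegative) antiderivative of $\beta\ee$, for which the standard properties $\betaciapo\ee\le\betaciapo$ and $\betaciapo\ee\uparrow\betaciapo$ pointwise as $\epsi\searrow0$ hold (cf.~\cite{Br}).

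The next step is to take the $\limsup$ as $\epsi\searrow0$ in \eqref{lemma6:split}, treating the four terms on the right separately. The first converges to $\iTT\igc(a-\frac12|\bu|^2)z_t$ by strong--weak convergence; for the second, $\limsup_\epsi\big(-\iTT\igc|z_t\ee|^2\big)\le-\iTT\igc|z_t|^2$ by weak lower semicontinuity of the $L^2$-norm; for the last one, $\igc\betaciapo\ee(z_0)\to\igc\betaciapo(z_0)$ by monotone convergence, since $\betaciapo(z_0)\in L^1(\Gamma_C)$ by \eqref{hp:z02}; and, since $z\ee(T)\to z(T)$ in $L^2(\Gamma_C)$ (hence a.e.\ along a subsequence) by \eqref{convz2}, combining $\betaciapo\ee\ge\betaciapo^{\epsi_0}$ for $\epsi\le\epsi_0$ with Fatou's lemma and then letting $\epsi_0\searrow0$ gives $\liminf_\epsi\igc\betaciapo\ee(z\ee(T))\ge\igc\betaciapo(z(T))$. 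On the other hand, knowing \eqref{inc1} and that $z\in H^1(0,T;L^2(\Gamma_C))$ with $z(0)=z_0$ and $\betaciapo(z_0)\in L^1(\Gamma_C)$, the classical chain rule for convex functionals along $H^1$-curves (see, e.g., \cite{Br}) yields $\iTT\igc\xi_1 z_t=\igc\betaciapo(z(T))-\igc\betaciapo(z_0)$; inserting this and \eqref{eq:z} shows that $\iTT\igc\xi_2 z_t=\iTT\igc(a-\frac12|\bu|^2)z_t-\iTT\igc|z_t|^2-\igc\betaciapo(z(T))+\igc\betaciapo(z_0)$, whence, comparing with the $\limsup$ of \eqref{lemma6:split},
\begin{equation}\label{lemma6:limsup}
  \limsup_{\epsi\searrow0}\iTT\igc\alpha\ee(z_t\ee)\,z_t\ee\le\iTT\igc\xi_2 z_t .
\end{equation}
Finally, \eqref{lemma6:limsup} together with $\alpha\ee(z_t\ee)\rightharpoonup\xi_2$, $z_t\ee\rightharpoonup z_t$ weakly in $L^2(0,T;L^2(\Gamma_C))$ and the graph convergence $\alpha\ee\to\alpha$ permits us, by the same monotonicity argument as in the first paragraph, to conclude $\xi_2\in\alpha(z_t)$ a.e., i.e.\ \eqref{inc2}. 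The main obstacle is precisely the chain \eqref{lemma6:split}--\eqref{lemma6:limsup}: one must compensate the missing strong convergence of $z_t\ee$ by rewriting $\iTT\igc\alpha\ee(z_t\ee)z_t\ee$ through the equation and then control the ``bad'' term $\iTT\igc\beta\ee(z\ee)z_t\ee$ by the time-endpoint integral $\igc\betaciapo\ee(z\ee(T))$, for which only the one-sided (Fatou) semicontinuity bound is available --- which is exactly the inequality needed to close \eqref{lemma6:limsup}.
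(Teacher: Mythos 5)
Your proposal is correct and follows essentially the same route as the paper: pass to the limit in \eqref{damep} to obtain \eqref{eq:z}, identify $\xi_1$ via strong--weak convergence and the closedness of maximal monotone graphs, and identify $\xi_2$ through Minty's trick by testing the approximate equation with $z_t\ee$ (yielding precisely the paper's identity \eqref{4.44}), taking the $\limsup$, and comparing with the limit equation tested by $z_t$ via the chain rule for $\betaciapo$. The only superficial difference is that for the endpoint term $\igc\betaciapo\ee(z\ee(T))$ you spell out a Fatou plus monotone-convergence argument using the strong pointwise convergence \eqref{convz2}, whereas the paper invokes Mosco-convergence of $\betaciapo\ee$ to $\betaciapo$ together with \eqref{convz}; both deliver the same one-sided lower-semicontinuity estimate.
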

\begin{proof}
Using \eqref{convz}, \eqref{conv:xi1}, \eqref{conv:xi2}, and \eqref{uforte3},
we can take the limit in equation \eqref{damep} and get back \eqref{eq:z},
or, in other words, condition~(ii). Hence, it remains to identify
$\xi_1$ and $\xi_2$. Firstly, we observe that 
inclusion~\eqref{inc1} follows by combining \eqref{convz1} with \eqref{conv:xi1}
and using, e.g., \cite[Prop.~1.1, p.~42]{Ba}. Next, to prove \eqref{inc2},
we use once more Minty's trick (cf., e.g., \cite[Prop.~2.5]{Br}); namely,
we need to check that
\begin{align}\label{4.43}
 \limsup_{\eps\rightarrow0} \int_0^T\igc \alpha_\eps(z_t^\eps) z^\eps_t 
  \leq \int_0^T\igc \xi_2 z_t.
\end{align}
Then, we multiply \eqref{dam:pf} by $z^\eps_t$ to obtain
\begin{equation}\label{4.44}
  \int_0^T \igc \alpha_\eps(z_t^\eps) z^\eps_t 
   = - \|z^\eps_t\|_{L^2(0,T;L^2(\Gamma_C))}^2
    - \igc\betaciapo\ee(z^\eps(T))
    + \igc\betaciapo\ee(z_0)
    + \int_0^T\igc \Big(a-\frac{1}{2}|\vu^\eps|^2\Big)z^\eps_t.
\end{equation}
Then, taking the $\limsup$, we may observe that the four terms on 
the \rhs\ can be managed, respectively, by \eqref{convz} and 
semicontinuity, by \eqref{convz} with
the Mosco-convergence of $\betaciapo\ee$ to
$\betaciapo$ (cf.~\cite[Chap.~3]{At}), by the monotone convergence
theorem, and by combining \eqref{convz} with \eqref{uforte3} (note
that having {\sl strong}\/ convergence of $\vu\ee$
in $L^4$ is essential at this step).
As a consequence, we then infer
\begin{equation}\label{4.45}
  \limsup_{\epsi\searrow 0} \int_0^T \igc \alpha_\eps(z_t^\eps) z^\eps_t 
   \le - \|z_t\|_{L^2(0,T;L^2(\Gamma_C))}^2
    - \igc\betaciapo(z(T))
    + \igc\betaciapo(z_0)
    + \int_0^T\igc \big(a-\frac{1}{2}|\vu|^2\big)z_t,
\end{equation}
Now, testing \eqref{eq:z} by $z_t$ and applying the chain rule formula
\cite[Lemme~3.3, p.~73]{Br} to integrate the product term $\xi_1 z_t$, 
we see that the above \rhs\ is equal
to $\int_0^T\igc \xi_2 z_t$. Hence, \eqref{4.43} follows. In turn, this
implies \eqref{inc2}, which concludes the proof.
\end{proof}
\begin{lemm}[Step 7: condition (i)]\label{lemma7}
 The functions $\vu$, $z$, $\eta$, and $\eta_{(t)}$ obtained in~{\rm Lemma~\ref{lemma3}}
 satisfy\/ {\rm condition (i)} of\/ {\rm Def.~\ref{weaksol}}. 
\end{lemm}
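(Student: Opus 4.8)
The plan is to pass to the limit as $\epsi\searrow 0$ in the integrated weak formulation \eqref{eq1w:eps} and in its counterpart on subintervals, relying on the convergences collected in Lemma~\ref{lemma3} and on the refined strong convergences of Lemma~\ref{lemma4} and Lemma~\ref{lemma5}. First I would recall that \eqref{eq1w:0} has already been derived in the course of the proof of Lemma~\ref{lemma4}; comparing it with the target identity \eqref{eq1w} in Def.~\ref{weaksol}, the only point still to be checked is that the nonlinear coupling term passes to the limit with the correct identification, i.e.\ that $\iTT\igc v^\eps\vu^\eps\cdot\bphi \to \iTT\igc z\,\vu\cdot\bphi$ for every $\bphi\in\calV$. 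This follows from \eqref{convu12} (which gives $v^\eps\vu^\eps \debo z\vu$ weakly in $L^{4/3}$ over $(0,T)\times\Gamma_C$, recalling $v^\eps=(z^\eps)^+\to (z)^+=z$ since the limit $z$ satisfies $0\le z\le 1$ by \eqref{convz2} and \eqref{hp:z0}) together with the fact that the trace of $\bphi$ lies in $L^4(0,T;L^4(\Gamma_C;\RR^3))$ by \eqref{cont:trace}. Hence \eqref{eq1w} holds. The initial conditions $\vu|_{t=0}=\vu_0$, $z|_{t=0}=z_0$ are inherited from the approximate ones via the strong convergences \eqref{convu11} (or \eqref{uforte2} at $t=0$) and \eqref{convz2}, while the value $(\vu_1,\bphi(0))$ on the right-hand side is already in place because $\vu^\eps_t(0)=\vu_1$ for every $\epsi$.

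Next I would establish the subinterval identity \eqref{eq1wt} together with the existence of the functional $\eta_{(t)}$ and the compatibility relation \eqref{compaeta}. For each fixed $t\in[0,T)$, the functional $\eta_{(t)}\in\calH_t'$ has already been constructed in \eqref{def:eta} within the proof of Lemma~\ref{lemma3}, where it was shown that $\duavwt{\eta_{(t)},\psi}$ is exactly the limit of $\itt\igc\gamma^\eps(\vu^\eps\cdot\bn)\psi$ along the same subsequence, with no further extraction needed. Extending any $\bphi\in\calV_t$ by a boundary-preserving lifting $R$ (Lemma~\ref{lemma0}) applied to its trace $\bphi|_{\Gamma_C}\cdot\bn$, and using linearity plus the fact that $\eta_{(t)}$ is independent of the chosen extension, one rewrites the integrated form of \eqref{u:weak} on $(0,t)$ and passes to the limit term by term exactly as for \eqref{eq1w:0}: the quadratic term is handled by \eqref{convu12} restricted to $(0,t)$, the linear elastic and viscous terms by \eqref{conv1}, and the boundary term by the definition of $\eta_{(t)}$. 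This yields \eqref{eq1wt}. For the compatibility \eqref{compaeta}, I would take $\bphi\in\calV_t$ with $\bphi(t)=0$ a.e.\ in $\Omega$, whose trivial extension $\ov{\bphi}$ belongs to $\calV$, and compare the defining expressions: at the approximate level, $\itt\igc\gamma^\eps(\vu^\eps\cdot\bn)\,\bphi\cdot\bn=\iTT\igc\gamma^\eps(\vu^\eps\cdot\bn)\,\ov{\bphi}\cdot\bn$ since $\ov{\bphi}$ vanishes on $(t,T)$, and passing to the limit on both sides (using \eqref{conv:betadual} and \eqref{conv:betadual:t}) gives precisely $\duavwt{\eta_{(t)},\bphi\cdot\bn}_t=\duavw{\eta,\ov{\bphi}\cdot\bn}$.

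The main obstacle is not in any single limit passage — those are routine given the estimates already in hand — but in making sure the subinterval functional $\eta_{(t)}$ is well defined as an element of $\calH_t'$ independently of the extension operator, and that the same subsequence serves simultaneously for all $t$; both of these facts, however, have already been secured in Lemma~\ref{lemma3}, so here they only need to be invoked. The graph inclusions \eqref{incw} and \eqref{incwt} asserted in part (iii) of Def.~\ref{weaksol} are not needed for the present lemma, as they were proved in Lemma~\ref{lemma4} (relations \eqref{eq:eta}, \eqref{eq:etat}). Therefore, collecting the above, the functions $\vu$, $z$, $\eta$, $\eta_{(t)}$ produced in Lemma~\ref{lemma3} satisfy \eqref{eq1w}, \eqref{eq1wt} and \eqref{compaeta}, i.e.\ condition (i) of Def.~\ref{weaksol}, which completes the proof.
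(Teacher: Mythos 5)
Your strategy coincides with the paper's: pass to the limit in \eqref{eq1w:eps} and its subinterval analogue using the convergences from Lemmas~\ref{lemma3}--\ref{lemma5} and the construction of $\eta_{(t)}$ from Lemma~\ref{lemma3}, then obtain \eqref{compaeta} by comparing the approximate identities for a test function vanishing at $t$ against its trivial extension. The rest of your outline (initial conditions, where the graph inclusions were already proved) is also consistent with the paper.

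There is, however, a genuine gap in the identification of the coupling term. You write that $v^\eps=(z^\eps)^+\to(z)^+=z$ ``since the limit $z$ satisfies $0\le z\le 1$ by \eqref{convz2} and \eqref{hp:z0}.'' This reasoning does not hold: the approximate function $z^\eps$ is \emph{not} constrained to be nonnegative --- the Yosida regularization $\beta^\eps$ is a Lipschitz function on all of $\RR$ and enforces no constraint, which is precisely why the positive part $(z^\eps)^+$ was introduced in \eqref{bcep} to begin with --- so strong convergence together with $z_0\in[0,1]$ does not give $z\ge 0$. The correct argument, and the one the paper uses, is that the inclusion $\xi_1\in\beta(z)$ a.e., established in Lemma~\ref{lemma6} (relation \eqref{inc1}), forces $z\in D(\beta)\subset[0,+\infty)$ a.e., whence $z\ge 0$ and $(z)^+=z$. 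This is exactly why the paper places Step~7 \emph{after} Step~6: the flow-rule identification is a logical prerequisite for closing the momentum balance. (The upper bound $z\le 1$ is not needed in this lemma; if one wanted it, it would follow from $\xi_2\in\alpha(z_t)$, hence $z_t\le 0$ a.e., combined with $z_0\le 1$, again relying on Lemma~\ref{lemma6} rather than on convergence and the initial datum alone.)
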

\begin{proof}
The statement follows from all the results obtained so far.
In particular, we notice that the function $v$ in \eqref{convv} 
coincides with $(z)^+$, thanks to the strong convergence \eqref{convz1}. 
Moreover, as a consequence of \eqref{inc1}, we have $z\ge 0$ a.e.~on $(0,T)\times\Gamma_C$.
This implies that $(z)^+=z$. Hence, we may take the limit as $\epsi\searrow 0$
in equation \eqref{eq1w:eps}, which gives back \eqref{eq1w}. Analogously,
repeating the procedure on a subinterval~$(0,t)$, we obtain~\eqref{eq1wt}.
Finally, taking a test function $\bphi\in \calV_t$ with $\bphi(t)\equiv 0$
and considering its trivial
extension $\Ov{\bphi}$ to the whole of $(0,T)$, plugging $\Ov{\bphi}$ in 
\eqref{eq1w:eps} and $\bphi$ in the analogue of \eqref{eq1w:eps} over
the interval~$(0,t)$, and finally letting $\epsi\searrow 0$, we obtain
\eqref{compaeta}.
\end{proof}
\begin{lemm}[Step 8: energy inequality]\label{lemma8} 
 The energy inequality \eqref{en:ineq} holds for almost every $t_1\in [0,T]$
 and for every $t_2 \in (t_1,T]$.
\end{lemm}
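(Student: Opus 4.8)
The plan is to pass to the limit $\epsi\searrow0$ in the approximate energy identity \eqref{energyee}, integrated over an arbitrary interval $[t_1,t_2]\subset[0,T]$,
\[
  \calE\ee(t_2)+\int_{t_1}^{t_2}\calD\ee
   =\calE\ee(t_1)+\int_{t_1}^{t_2}\duav{\bg,\vu\eet}
    -\int_{t_1}^{t_2}\igc\frac12(z\ee)^-\,\vu\ee\cdot\vu\eet ,
\]
using lower semicontinuity on the left and convergence on the right. I would first observe that, for \emph{every} $t_2\in[0,T]$, $\calE(t_2)\le\liminf_{\epsi\searrow0}\calE\ee(t_2)$: the kinetic contribution $\io\frac12|\vu\eet(t_2)|^2$ is weakly lower semicontinuous by \eqref{strongu2}; the elastic contribution converges by the strong convergence $\vu\ee(t_2)\to\vu(t_2)$ in $\bV$ (cf.~\eqref{uforte2}); the $\betaciapo$- and $\gammaciapo$-terms are bounded from below via Fatou's lemma, using the strong $L^2(\Gamma_C)$-convergences $z\ee(t_2)\to z(t_2)$ (cf.~\eqref{convz2}) and $\vu\ee(t_2)\cdot\bn\to\vu(t_2)\cdot\bn$ together with the (Mosco) convergence of $\betaciapo\ee$, $\gammaciapo\ee$; and the coupling term $\frac12\igc(z\ee)^+(t_2)|\vu\ee(t_2)|^2$ converges strongly since $(z\ee)^+(t_2)\to z(t_2)$ in $L^2(\Gamma_C)$ and $|\vu\ee(t_2)|^2\to|\vu(t_2)|^2$ in $L^2(\Gamma_C)$. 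Likewise $\int_{t_1}^{t_2}\calD\le\liminf_{\epsi\searrow0}\int_{t_1}^{t_2}\calD\ee$: the viscous integral and $\int_{t_1}^{t_2}\igc|z\eet|^2$ are weakly lower semicontinuous by \eqref{conv1} resp.~\eqref{convz}, while $\alpha\ee(z\eet)z\eet\ge0$ pointwise and, on the limit side, $\xi_2 z_t=0$ a.e.\ because $\alpha=\de I_{(-\infty,0]}$ forces $\xi_2=0$ wherever $z_t<0$; hence the boundary part of $\calD$ reduces to $\int_{t_1}^{t_2}\igc|z_t|^2$.

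Next I would handle the right-hand side: $\int_{t_1}^{t_2}\duav{\bg,\vu\eet}\to\int_{t_1}^{t_2}\duav{\bg,\vu_t}$ by the weak convergence \eqref{conv1} of $\vu\eet$ in $L^2(0,T;\bV)$ and $\bg\in L^2(0,T;\bV')$, and the spurious last term tends to $0$. Indeed, $z\ge0$ a.e.\ by \eqref{inc1}, so $(z)^-=0$, and since $z\ee\to z$ strongly in $L^2(0,T;L^2(\Gamma_C))$ (cf.~\eqref{convz1}), the contraction property of the negative part gives $(z\ee)^-\to0$ strongly there; combining with $|\vu\ee|^2\to|\vu|^2$ strongly in $L^2(0,T;L^2(\Gamma_C))$ (from \eqref{uforte3}) and the uniform bound on $\vu\eet$ in $L^2(0,T;\bV)$ kills that integral. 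It then remains to treat $\calE\ee(t_1)$, where I need the opposite inequality $\limsup_{\epsi\searrow0}\calE\ee(t_1)\le\calE(t_1)$. For $t_1=0$ this is immediate, since $\vu\ee(0)=\vu_0$, $\vu\eet(0)=\vu_1$, $z\ee(0)=z_0$ are fixed, $(z_0)^+=z_0$, and $\betaciapo\ee(z_0)\to\betaciapo(z_0)$, $\gammaciapo\ee(\vu_0\cdot\bn)\to\gammaciapo(\vu_0\cdot\bn)$ in $L^1(\Gamma_C)$ by monotone/dominated convergence in view of \eqref{hp:u0} and \eqref{hp:z02}; thus $\calE\ee(0)\to\calE(0)$.

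For a.e.\ interior $t_1$ the cleanest route is to prove that the \emph{whole} energy converges in $L^1(0,T)$, i.e.\ $\int_0^T\calE\ee\to\int_0^T\calE$, and then extract a further subsequence. The kinetic and elastic integrals converge thanks to the \emph{strong} convergences \eqref{strongu} and \eqref{uforte1}; the $\betaciapo$-, $z$- and $z|\vu|^2$-integrals converge by combining \eqref{convz1}, \eqref{uforte3}, the Mosco convergence of $\betaciapo\ee$ and the convexity inequality $\betaciapo\ee(z\ee)\le\betaciapo\ee(z)+\beta\ee(z\ee)(z\ee-z)$ (whose cross term vanishes by the $L^\infty(0,T;L^2(\Gamma_C))$-bound on $\beta\ee(z\ee)$); and the delicate $\gammaciapo\ee$-integral is treated similarly, the essential extra input being the Minty-type limit \eqref{semi17b} proved in Lemma~\ref{lemma4}, which forces the cross term $\duas{\gamma\ee(\vu\ee\cdot\bn),\,\vu\ee\cdot\bn-\vu\cdot\bn}\to0$ (recall that $\vu\ee\cdot\bn$ does \emph{not} converge strongly in $\calH$), together with the a priori bound $\sup_t\igc\gammaciapo\ee(\vu\ee(t)\cdot\bn)\le c$ from \eqref{apr10} and monotone convergence. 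Once $\calE\ee\to\calE$ in $L^1(0,T)$, along a subsequence $\calE\ee(t_1)\to\calE(t_1)$ for a.e.\ $t_1$; since the weak solution has already been identified, replacing the original subsequence by this one is harmless, and passing to the limit in the integrated identity yields \eqref{en:ineq} for a.e.\ $t_1\in[0,T)$ and every $t_2\in(t_1,T]$.

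The main obstacle is precisely this last step: obtaining $\limsup\calE\ee(t_1)\le\calE(t_1)$ at generic interior times, since neither the kinetic term nor the nonsmooth boundary term $\gammaciapo\ee(\vu\ee\cdot\bn)$ converges pointwise in time in any obvious way. Its resolution leans on upgrading the pointwise lower semicontinuity to $L^1(0,T)$-convergence of the full energy, which uses every strong space--time convergence from Lemmas~\ref{lemma4} and \ref{lemma5} and, crucially, the identification $\eta\in\gamma_w(\vu\cdot\bn)$ via \eqref{semi17b}; the remaining a.e.-in-$t_1$ extraction is then routine.
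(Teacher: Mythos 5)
Your opening matches the paper's own strategy: start from the integrated approximate energy identity~\eqref{energyee}, take $\liminf$, and handle $\calE(t_2)$, the dissipation, the forcing term, and the spurious $(z\ee)^-$-term in the same way. The observation that $\xi_2\, z_t=0$ a.e.\ (since $\alpha=\de I_{(-\infty,0]}$ gives $\xi_2=0$ wherever $z_t<0$), so the boundary part of $\calD$ reduces to $\int z_t^2$, is a clean simplification that the paper leaves implicit.

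The genuine divergence is in the treatment of $\calE\ee(t_1)$. The paper obtains a.e.-in-time convergence of the smooth terms via the strong convergences of Lemmas~\ref{lemma4}--\ref{lemma5} and, for the nonsmooth boundary integral, invokes a pointwise-in-time $\limsup$ inequality~\eqref{8.13} by referring to \cite[Sec.~3]{BRSS}. You instead pass through time-integrated quantities: you show $\int_0^T\!\igc\betaciapo\ee(z\ee)\to\int_0^T\!\igc\betaciapo(z)$ and $\int_0^T\!\igc\gammaciapo\ee(\vu\ee\cdot\bn)\to\int_0^T\!\igc\gammaciapo(\vu\cdot\bn)$, the latter via the convexity inequality and the limit~\eqref{semi17b}, and then extract a further a.e.-convergent subsequence. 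This is a more self-contained route, leveraging only what is already proved in Lemmas~\ref{lemma4}--\ref{lemma5} rather than the companion paper, and it is a correct alternative.

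One imprecision does need to be patched: $\int_0^T\calE\ee\to\int_0^T\calE$ is not by itself convergence of $\calE\ee$ to $\calE$ in $L^1(0,T)$, and only the latter gives a.e.-in-time subsequential convergence. The repair is a Scheff\'e-type step: you already have the pointwise bound $\calE(t)\le\liminf_{\epsi}\calE\ee(t)$ at every~$t$ (your $t_2$-argument is time-independent), and $\calE\ee$ is uniformly bounded below (the only non-sign-definite contribution is $-a\igc z\ee$, controlled by~\eqref{convz} and~\eqref{limbasso}). Combining this lower semicontinuity with the convergence of the integrals, a reverse-Fatou manipulation (for instance applied term-by-term, each summand having its own pointwise lsc bound and integral convergence) upgrades to $\|\calE\ee-\calE\|_{L^1(0,T)}\to0$, after which the a.e.\ extraction and the conclusion of~\eqref{en:ineq} are as you describe.
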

\begin{proof}
We know that the approximate solution $(\vu^\eps,z^\eps)$ satisfies the 
energy equality~\eqref{energyee}. Integrating it in the time interval $[t_1,t_2]$, we get
\begin{equation}\label{8.10}
  \calE\ee(t_2) + \int_{t_1}^{t_2} \calD\ee(\cdot)
   = \calE\ee(t_1) 
   + \int_{t_1}^{t_2} \duav{\bg,\bu\eet}
   - \int_{t_1}^{t_2} \igc (z\ee)^- \vu\ee \cdot \vu\eet,
\end{equation}
for every $0\le t_1 \le t_2 \le T$. Now, we take the $\liminf$ on both hand
sides. Then, standard semicontinuity arguments and the convergence relations 
proved so far permit us to prove that
\begin{equation}\label{8.11}
  \calE(t_2) + \int_{t_1}^{t_2} \calD(\cdot)
   \le \liminf_{\eps\searrow 0} 
      \Big( \calE\ee(t_2) + \int_{t_1}^{t_2} \calD\ee(\cdot) \Big)
\end{equation}
for every $t_1$, $t_2$. Moreover, in view of the fact that $z\ge 0$ almost
everywhere, it is easy to check that $(z\ee)^-$ tends to $0$ 
strongly in $L^2(0,T;L^2(\Gamma_C))$, whence the last integral in 
\eqref{8.10} vanishes in the limit. Moreover, it is also easily 
seen that the term depending on $\bg$ passes to the limit. 

Hence, it just remains to control $\calE\ee(t_1)$, which is the more 
delicate point. Indeed, what we want to do is taking the $\liminf$ of
\begin{align}\no
  \calE\ee(t_1)
   & = \io \Big( \frac12 | \vu\ee_t(t_1) |^2 
   + \frac12 \TE \bep(\vu\ee(t_1)) : \bep(\vu\ee(t_1)) \Big)
   + \igc \Big( - a z\ee(t_1) + \frac12 z\ee(t_1) | \vu\ee(t_1) |^2 \Big)\\
 \label{8.12}
  & \mbox{}~~~~~~~~~~  
    + \igc \Big( \betaciapo\ee(z\ee(t_1)) 
      + \gammaciapo\ee ( \vu\ee(t_1) \cdot \vn ) \Big).
\end{align}
Then, in view of \eqref{uforte1}-\eqref{uforte2},
\eqref{strongu}, \eqref{uforte3} and \eqref{convz}, the $\liminf$ 
of the first two integrals is in fact a true limit and is given by
the same couple of integrals rewritten without the~$\epsi$.
On the other hand, this holds just for {\sl almost}\/ every $t_1$; 
indeed, to take the limit of the integral of
$| \vu\ee_t(t_1) |^2 $, we need to use \eqref{strongu};
in other words, we deduce a.e.~(in time) convergence
from $L^2$-convergence by extraction of a subsequence.

Finally, we need to control the last integral in~\eqref{8.12}. 
This can be done by following closely the argument devised in \cite[Sec.~3]{BRSS}, 
to which we refer the reader. Actually, by that method we may prove 
more precisely that
\begin{equation}\label{8.13}
  \limsup_{\epsi\searrow0} \igc \Big( \betaciapo\ee(z\ee(t_1)) 
      + \gammaciapo\ee ( \vu\ee(t_1) \cdot \vn ) \Big)
    \le \igc \Big( \betaciapo(z(t_1)) 
      + \gammaciapo ( \vu(t_1) \cdot \vn ) \Big),
\end{equation}
again, for {\sl almost}\/ every $t_1$. This concludes the proof.
\end{proof}
\beos\label{lemma9} 
 In addition to \eqref{en:ineq}, we may notice that the energy functional 
 defined in \eqref{defiE} is lower semicontinuous with respect to the variable $t\in[0,T]$.
 Indeed, as a consequence of \eqref{reg:u}-\eqref{reg:z}, we have 
 \begin{equation}\label{weakcont}
    \vu\in C([0,T];\bV), \quad
     \vu_t\in C_w([0,T];\bH), \quad
     z\in C([0,T];L^2(\Gamma_C)).
 \end{equation}
 Hence, the lower semicontinuity of 
 $t\mapsto \calE(\vu(t),\vu_t(t),z(t))$ 
 follows easily from the
 convexity of $\betaciapo$ and $\gammaciapo$ and from lower semicontinuity
 of norms with respect to weak convergence.
\eddos

\section*{Acknowledgements}

The financial support of the FP7-IDEAS-ERC-StG \#256872
(EntroPhase) is gratefully acknowledged by the authors. The present paper 
also benefits from the support of the MIUR-PRIN Grant 2010A2TFX2 ``Calculus of Variations''
for GS, and of the GNAMPA (Gruppo Nazionale per l'Analisi Matematica, 
la Probabilit\`a e le loro Applicazioni) of INdAM 
(Istituto Nazionale di Alta Matematica). 
%



\end{document}